\newtheorem{theorem}{Theorem}[section]
\newtheorem{proposition}{Proposition}[section]
\newtheorem{corollary}{Corollary}[section]
\theoremstyle{definition}
\newtheorem{definition}{Definition}[section]
\newtheorem{remark}{Remark}[section]
\numberwithin{equation}{section}
\begin{document}

\title{Annihilator ideals of two-generated metabelian \(p\)-groups}

\author{Daniel C. Mayer}
\address{Naglergasse 53\\8010 Graz\\Austria}
\email{algebraic.number.theory@algebra.at}
\urladdr{http://www.algebra.at}
\thanks{Research supported by the Austrian Science Fund (FWF): P 26008-N25}

\subjclass[2000]{Primary 13F20, 13P10; Secondary 20D15, 20F12, 20F14}
\keywords{Ideals of bivariate polynomials with integer coefficients, residue class rings,
metabelian \(p\)-groups with two generators, action on the commutator subgroup, commutator calculus, central series}

\date{March 30, 2016}

\dedicatory{Dedicated to the memory of Otto Schreier}

\begin{abstract}
For a metabelian \(p\)-group \(G=\langle x,y\rangle\) with two generators \(x\) and \(y\),
the annihilator \(\mathfrak{A}\,\unlhd\,\mathbb{Z}\lbrack X,Y\rbrack\)
of the main commutator \(\lbrack y,x\rbrack\) of \(G\),
as an ideal of bivariate polynomials with integer coefficients,
is determined by means of a presentation for \(G\).
Furtw\"angler's isomorphism of the additive group underlying the residue class ring
\(\mathbb{Z}\lbrack X,Y\rbrack/\mathfrak{A}\)
of \(\mathbb{Z}\lbrack X,Y\rbrack\) modulo the annihilator \(\mathfrak{A}\)
to the commutator subgroup \(G^\prime\) of \(G\)
admits the calculation of the abelian type of \(G^\prime\).
\end{abstract}

\maketitle



\section{Introduction}
\label{s:Intro}

\noindent
Let \(p\ge 2\) be a prime number
and assume that \(G\) is a finite metabelian \(p\)-group
with abelianization \(G/G^\prime\) of type \((p,p)\).
Then \(G=\langle x,y\rangle\) has two generators \(x\) and \(y\)
and we call \(s_2:=\lbrack y,x\rbrack\in G^\prime\)
the \textit{main commutator} of \(G\).
There is an action \(G\times G^\prime\to G^\prime\),
\((g,t)\mapsto t^g:=g^{-1}tg\),
of \(G\) on the commutator subgroup \(G^\prime\)
via inner automorphisms,
since \(t^{gh}=(gh)^{-1}tgh=h^{-1}g^{-1}tgh=(t^g)^h\), \(t^1=t\),
and \(G^\prime\) is a characteristic subgroup of \(G\).
This action can be extended to the integral group algebra
\(\mathbb{Z}\lbrack G\rbrack\)
by putting \(t^{mg+nh}:=(t^g)^m\cdot (t^h)^n\),
for \(m,n\in\mathbb{Z}\), \(g,h\in G\).
Based on this extended action,
Furtw\"angler proved the following theorem
\cite{Fw},
which is crucial for the present article.


\begin{theorem}
\label{thm:Furtwaengler}
(Ph. Furtw\"angler, \(1929\))\quad
The composite map

\begin{equation}
\label{eqn:Furtwaengler}
\begin{aligned}
\Psi:\quad & \mathbb{Z}\lbrack X,Y\rbrack \buildrel \psi \over{\longrightarrow} \mathbb{Z}\lbrack G\rbrack \longrightarrow G^\prime,        \\
           & f(X,Y)                       \mapsto         f(x-1,y-1)                 \mapsto         s_2^{f(x-1,y-1)}
\end{aligned}
\end{equation}

\noindent
is an epimorphism
from the underlying additive group of the ring \(\mathbb{Z}\lbrack X,Y\rbrack\) of bivariate polynomials
in two indeterminates \(X,Y\) onto the (multiplicative) commutator subgroup \(G^\prime\) of \(G\).
\end{theorem}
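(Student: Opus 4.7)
The plan is to split the proof into two parts: first, verify that \(\Psi\) is a well-defined homomorphism from the additive group of \(\mathbb{Z}\lbrack X,Y\rbrack\) to the multiplicative group \(G^\prime\); second, verify that it is surjective.

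For the first part, I would define \(\psi\) as the \(\mathbb{Z}\)-linear map sending the monomial basis element \(X^iY^j\) to \((x-1)^i(y-1)^j\in\mathbb{Z}\lbrack G\rbrack\); since \(x-1\) and \(y-1\) need not commute in \(\mathbb{Z}\lbrack G\rbrack\), one must fix an ordering of factors, so \(\psi\) is only a homomorphism of additive groups, not of rings. The second arrow \(\alpha\mapsto s_2^\alpha\) is additive in \(\alpha\), i.e.\ \(s_2^{\alpha+\beta}=s_2^\alpha\cdot s_2^\beta\), by the very definition of the extended action together with the abelianness of \(G^\prime\) (which ensures that the multiplicative order in which conjugates are combined is immaterial). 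Composing then yields the desired additive-to-multiplicative homomorphism.

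For surjectivity I would argue in two steps. Because \(G=\langle x,y\rangle\) has only two generators, iterated application of the commutator identities \(\lbrack g_1g_2,h\rbrack=\lbrack g_1,h\rbrack^{g_2}\lbrack g_2,h\rbrack\) and its companion shows that \(G^\prime\) is the normal closure in \(G\) of the single element \(s_2=\lbrack y,x\rbrack\). The metabelian hypothesis makes \(G^\prime\) abelian, so this normal closure coincides with the subgroup generated by the conjugates \(\{s_2^g:g\in G\}\); and since \(G^\prime\) acts trivially on itself by conjugation, it suffices to let \(g\) run over coset representatives of \(G^\prime\) in \(G\), which may be taken to be monomials \(x^iy^j\). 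It then remains only to exhibit each such \(x^iy^j\in\mathbb{Z}\lbrack G\rbrack\) as an element of the image of \(\psi\): since \(G\) is a finite \(p\)-group, \(x\) and \(y\) have finite order and we may assume \(i,j\ge 0\), whereupon the binomial expansion
\[
x^iy^j=\bigl(1+(x-1)\bigr)^i\bigl(1+(y-1)\bigr)^j=\sum_{a,b}\binom{i}{a}\binom{j}{b}(x-1)^a(y-1)^b
\]
exhibits \(x^iy^j\) as the \(\psi\)-image of \(\sum_{a,b}\binom{i}{a}\binom{j}{b}X^aY^b\).

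The main obstacle, in my view, is the structural reduction in the first half of the previous paragraph: it is the combination of the two-generator hypothesis (which cuts the commutator generators of \(G^\prime\) down to the single \(s_2\)) and the metabelian hypothesis (which collapses the normal closure to a subgroup of conjugates, on which \(G^\prime\) itself acts trivially) that makes the purely polynomial-algebraic final step available. The additive-homomorphism claim and the binomial unwinding are then comparatively mechanical.
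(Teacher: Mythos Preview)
The paper does not actually supply a proof of this theorem: it is stated with attribution to Furtw\"angler \cite{Fw} and then used as a black box, so there is no ``paper's own proof'' to compare against. That said, your argument is essentially correct and is the standard one.

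Two minor points are worth tightening. First, your remark that \(\psi\) is only an additive map because \(x-1\) and \(y-1\) need not commute in \(\mathbb{Z}[G]\) is overly cautious: since \(G'\) is abelian it acts trivially on itself, so the symbolic-power action of \(\mathbb{Z}[G]\) on \(G'\) factors through \(\mathbb{Z}[G/G']\), and in \(G/G'\) the images of \(x\) and \(y\) do commute. Hence one may take \(\psi:\mathbb{Z}[X,Y]\to\mathbb{Z}[G/G']\) as a genuine ring homomorphism, which removes the need for an ordering convention. Second, the sentence ``the metabelian hypothesis makes \(G'\) abelian, so this normal closure coincides with the subgroup generated by the conjugates'' is slightly misplaced: the normal closure of any element is always the subgroup generated by its conjugates. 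Metabelianness enters exactly where you use it next, namely to reduce the conjugating elements to coset representatives \(x^iy^j\) of \(G/G'\); that step, together with the binomial unwinding, is where the argument really lives, and you have that part right.
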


\begin{remark}
\label{rmk:SymbolicPowers}
Note that
\(t^{x-1}=t^{-1+x}=t^{-1}x^{-1}tx=\lbrack t,x\rbrack\), and similarly \(t^{y-1}=\lbrack t,y\rbrack\),
since \(G^\prime\) is abelian.
Consequently, if \(f(X,Y)=\sum_{i\ge 0}\sum_{j\ge 0}\,n_{i,j}X^iY^j\in\mathbb{Z}\lbrack X,Y\rbrack\), then the \textit{symbolic power}

\begin{equation}
\label{eqn:SymbolicPowers}
s_2^{f(x-1,y-1)}=\prod_{i\ge 0}\prod_{j\ge 0}\,\left(s_2^{(x-1)^i(y-1)^j}\right)^{n_{i,j}}
\end{equation}

\noindent
is a finite product of (usual) powers of \textit{iterated commutators}.
\end{remark}

In
\cite{Fw},
Furtw\"angler has in fact proved a generalization of this theorem
for a finite metabelian \(p\)-group \(G=\langle x_1,\ldots,x_n\rangle\) with generator rank \(d_1(G)=n\)
and its \(\binom{n}{2}\) main commutators \(t_{j,k}:=\lbrack x_k,x_j\rbrack\), \(1\le j<k\le n\).
Furtw\"angler's theorem states that the commutator subgroup \(G^\prime\) consists
of all products of symbolic powers
\(\left\lbrace\prod_{1\le j<k\le n}\,t_{j,k}^{f_{j,k}(x_1-1,\ldots,x_n-1)}\mid f_{j,k}(X_1,\ldots,X_n)\in\mathbb{Z}\lbrack X_1,\ldots,X_n\rbrack\right\rbrace\).


\begin{definition}
\label{dfn:Annihilator}
The kernel of the epimorphism \(\Psi\),

\begin{equation}
\label{eqn:Annihilator}
\mathfrak{A}=\mathfrak{A}(G):=\ker(\Psi)=\left\lbrace f(X,Y)\in\mathbb{Z}\lbrack X,Y\rbrack\mid s_2^{f(x-1,y-1)}=1\right\rbrace,
\end{equation}

\noindent
is an ideal of the ring \(\mathbb{Z}\lbrack X,Y\rbrack\)
which is called the \textit{annihilator} of the group \(G=\langle x,y\rangle\), more precisely, of the commutator subgroup \(G^\prime\),
or the \textit{symbolic order} of the main commutator \(s_2=\lbrack y,x\rbrack\),
with respect to powers with symbolic exponents in \(\mathbb{Z}\lbrack G\rbrack\).
\end{definition}


An application of the isomorphism theorem yields:

\begin{corollary}
\label{cor:CommutatorSubgroup}
The additive group underlying the residue class ring of \(\mathbb{Z}\lbrack X,Y\rbrack\) modulo \(\mathfrak{A}\),

\begin{equation}
\label{eqn:CommutatorSubgroup}
\mathbb{Z}\lbrack X,Y\rbrack/\mathfrak{A}=\mathbb{Z}\lbrack X,Y\rbrack/\ker(\Psi)\simeq\mathrm{im}(\Psi)=G^\prime,
\end{equation}

\noindent
is isomorphic to the (multiplicative) commutator subgroup \(G^\prime\) of \(G\).
\end{corollary}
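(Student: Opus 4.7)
The plan is straightforward: the corollary is essentially the first isomorphism theorem applied to the map \(\Psi\) of Theorem~\ref{thm:Furtwaengler}, regarded as a homomorphism of additive abelian groups. Since \(G\) is assumed metabelian, the commutator subgroup \(G^\prime\) is abelian, so we may legitimately compare it with the additive group underlying \(\mathbb{Z}\lbrack X,Y\rbrack/\mathfrak{A}\).

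First I would assemble the three ingredients already supplied in the excerpt. Theorem~\ref{thm:Furtwaengler} gives a surjective group homomorphism \(\Psi:\mathbb{Z}\lbrack X,Y\rbrack\to G^\prime\), where the source is viewed as the additive group of the polynomial ring. Definition~\ref{dfn:Annihilator} identifies \(\ker(\Psi)=\mathfrak{A}\). The first isomorphism theorem for abelian groups then yields a canonical isomorphism
\[
\bar\Psi:\;\mathbb{Z}\lbrack X,Y\rbrack/\mathfrak{A}\;\longrightarrow\;G^\prime,\qquad f(X,Y)+\mathfrak{A}\;\mapsto\;s_2^{f(x-1,y-1)},
\]
which is well-defined and injective by the kernel identification and surjective by Theorem~\ref{thm:Furtwaengler}.

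The only step I expect to pause on—and the only genuinely nontrivial point—is the parenthetical claim in Definition~\ref{dfn:Annihilator} that \(\mathfrak{A}\) is an \emph{ideal} of \(\mathbb{Z}\lbrack X,Y\rbrack\), not merely an additive subgroup; this is what entitles us to speak of the residue class \emph{ring} on the left of \eqref{eqn:CommutatorSubgroup}. Strictly speaking the statement of the corollary only asserts an isomorphism of the underlying additive groups, so this is not logically required for the stated conclusion, but for completeness I would verify multiplicative closure by showing that if \(s_2^{f(x-1,y-1)}=1\) then for any \(g(X,Y)\in\mathbb{Z}\lbrack X,Y\rbrack\) one has
\[
s_2^{(fg)(x-1,y-1)} \;=\; \bigl(s_2^{f(x-1,y-1)}\bigr)^{g(x-1,y-1)} \;=\; 1^{g(x-1,y-1)} \;=\; 1,
\]
which uses the compatibility of the symbolic-exponent action with multiplication in \(\mathbb{Z}\lbrack G\rbrack\) recorded in Remark~\ref{rmk:SymbolicPowers}. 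With the ideal property in hand, \(\bar\Psi\) is an isomorphism of additive groups between an honest quotient ring and \(G^\prime\), and the corollary follows.
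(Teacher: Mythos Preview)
Your proposal is correct and takes the same approach as the paper, which simply prefaces the corollary with ``An application of the isomorphism theorem yields'' and gives no further argument. Your additional verification that \(\mathfrak{A}\) is closed under multiplication (hence a genuine ideal) is a welcome elaboration beyond what the paper spells out.
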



The layout of this article is as follows.
We focus on three infinite classes of groups
for which parametrized presentations are known.
Required background is provided in \S\ 
\ref{ss:MaxClass3}
for metabelian \(3\)-groups of coclass \(1\), in \S\
\ref{ss:NonMaxClass3}
for metabelian \(3\)-groups \(G\) of coclass \(\mathrm{cc}(G)\ge 2\) with abelianization \(G/G^\prime\) of type \((3,3)\), and in \S\
\ref{ss:MaxClassP}
for metabelian \(p\)-groups of coclass \(1\), for an arbitrary prime \(p\ge 2\).
The aim of \S\S\
\ref{s:MaxSmbOrd3},
\ref{s:LowSmbOrd},
\ref{s:MaxSmbOrdP}
is to determine the annihilator \(\mathfrak{A}\) of these metabelian \(p\)-groups.
In \S\
\ref{s:StrComSbg},
finally, the structure of the commutator subgroup \(G^\prime\) of \(G\) is calculated
with the aid of a suitable basis of the additive group underlying the residue class ring
\(\mathbb{Z}\lbrack X,Y\rbrack/\mathfrak{A}\).



\section{Presentations of metabelian \(p\)-groups}
\label{s:Presentations}

\noindent
For identifying those bivariate polynomials
\(f(X,Y)\in\mathbb{Z}\langle X,Y\rangle\)
with integer coefficients
which generate the annihilator ideal \(\mathfrak{A}\)
of the main commutator \(s_2=\lbrack y,x\rbrack\)
of a two-generated metabelian \(p\)-group \(G=\langle x,y\rangle\)
we need a presentation of the group.


Generally,
let \(G=\langle x,y\rangle\) be a metabelian \(p\)-group with two generators.
Assume that \(G\) is of order \(\lvert G\rvert=p^n\), where \(n\) is the \textit{logarithmic order},
of \textit{nilpotency class} \(c=\mathrm{cl}(G)=m-1\), where \(m\) is the \textit{index of nilpotency},
and of \textit{coclass} \(r=\mathrm{cc}(G)=n-c=n-m+1\).
Denote the \textit{lower central series} of \(G\) by

\begin{equation}
\label{eqn:LowerCentral}
G=\gamma_1(G)>\gamma_2(G)>\gamma_3(G)>\ldots>\gamma_{m-1}(G)>\gamma_m(G)=1,
\end{equation}

\noindent
where \(\gamma_j(G)=\lbrack\gamma_{j-1}(G),G\rbrack\), for \(j\ge 2\),
and the \textit{upper central series} of \(G\) by

\begin{equation}
\label{eqn:UpperCentral}
1=\zeta_0(G)<\zeta_1(G)<\zeta_2(G)<\ldots<\zeta_{c-1}(G)<\zeta_c(G)=G,
\end{equation}

\noindent
where \(\zeta_j(G)/\zeta_{j-1}(G)=\mathrm{Centre}(G/\zeta_{j-1}(G))\), for \(j\ge 1\).



\subsection{\(3\)-groups of maximal class}
\label{ss:MaxClass3}

\noindent
Let \(G\) be a metabelian \(3\)-group of coclass \(r=\mathrm{cc}(G)=1\),
where \(3\le m=n\), i.e., \(G\) is non-abelian.
Denote the \textit{two-step centralizer} of \(\gamma_2(G)/\gamma_4(G)\) by

\begin{equation}
\label{eqn:TwoStepCentralizerMax3}
\begin{aligned}
\chi_2(G)             &:= \left\lbrace g\in G\mid\lbrack u,g\rbrack\in\gamma_4(G), \text{ for all } u\in\gamma_2(G)\right\rbrace, \text{ that is,}\\
\chi_2(G)/\gamma_4(G) &= \mathrm{Centralizer}_{G/\gamma_4(G)}(\gamma_2(G)/\gamma_4(G)),
\end{aligned}
\end{equation}

\noindent
whence \(\gamma_2(G)<\chi_2(G)<G\) \(\Longleftrightarrow\) \(m\ge 4\),
and select \textit{normalized generators} of \(G=\langle x,y\rangle\) such that

\begin{equation}
\label{eqn:GeneratorsMax3}
x\in G\setminus\chi_2(G), \text{ if } m\ge 4, \quad y\in\chi_2(G)\setminus\gamma_2(G).
\end{equation}

\noindent
Starting with the main commutator \(s_2:=\lbrack y,x\rbrack\in\gamma_2(G)\)
define the \textit{higher iterated commutators} recursively by
\(s_j:=\lbrack s_{j-1},x\rbrack\in\gamma_j(G)\), for \(j\ge 3\),
and observe that they can be expressed as

\begin{equation}
\label{eqn:SymbolicPowersMax3}
s_j=s_2^{(x-1)^{j-2}}, \text{ for } j\ge 2,
\end{equation}

\noindent
by \textit{symbolic powers}, as explained in Remark
\ref{rmk:SymbolicPowers}.


The general theory of \(p\)-groups of maximal class can be found in Huppert
\cite[\S\ 14, p. 361]{Hp}
and Berkovich
\cite[\S\ 9, p. 114]{Bv}.
In the isomorphism class of a metabelian \(3\)-group \(G\) of maximal class,
there exists a representative \(G_\gamma^m(\beta,\alpha)\)
whose normalized generators satisfy the following relations,
according to Blackburn
\cite{Bl},
Miech
\cite{Mi1,Mi2},
and Nebelung
\cite[\S\ 3.2, p. 58]{Ne1}:

\begin{itemize}
\item
parametrized \textit{power relations} with parameters \(-1\le\alpha,\beta\le 1\),

\begin{equation}
\label{eqn:PowerRelMax3}
\begin{aligned}
x^3                   &= s_{m-1}^\alpha\in\zeta_1(G), \\
y^3s_2^3s_3           &= s_{m-1}^\beta\in\zeta_1(G), \\
s_j^3s_{j+1}^3s_{j+2} &= 1, \text{ for all } j\ge 2,
\end{aligned}
\end{equation}

\item
parametrized \textit{commutator relations} with parameter \(-1\le\gamma\le 1\),

\begin{equation}
\label{eqn:CommutatorRelMax3}
\lbrack s_2,y\rbrack=s_{m-1}^\gamma\in\zeta_1(G), \quad \text{ but } \lbrack s_j,y\rbrack=1, \text{ for all } j\ge 3,
\end{equation}

\item
parametrized \textit{nilpotency relations} with parameter \(m\ge 3\) (the index of nilpotency),

\begin{equation}
\label{eqn:NilpotencyRelMax3}
s_j\ne 1, \text{ for } j\le m-1, \quad \text{ but } s_j=1, \text{ for } j\ge m,
\end{equation}

\noindent
expressing the polycyclic structure
\(\gamma_j(G)=\langle s_j,\gamma_{j+1}(G)\rangle\),
\(\gamma_j(G)/\gamma_{j+1}(G)\simeq C_3\), for \(j\ge 2\) and \(j\le m-1\),
with \textit{cyclic factors} (CF) of the lower central series,
which coincides here with the reverse upper central series, \(\zeta_j(G)=\gamma_{m-j}(G)\), for \(0\le j\le m-1\),

\item
and (trivial) \textit{metabelian relations} within \(G^\prime\),

\begin{equation}
\label{eqn:MetabelianRelMax3}
\lbrack s_i,s_j\rbrack=1, \text{ for all } i,j\ge 2.
\end{equation}

\end{itemize}


\noindent
In the case of an index of nilpotency \(m\ge 4\), the commutator relation for \(s_2\) in Formula
\eqref{eqn:CommutatorRelMax3}
explicitly describes the properties of the two-step centralizer
\(\chi_2(G)=\langle y,\gamma_2(G)\rangle\),
which are implicitly postulated by Formula
\eqref{eqn:TwoStepCentralizerMax3}:

\begin{equation}
\label{eqn:DefectMax3}
\lbrack\gamma_2(G),\chi_2(G)\rbrack=\gamma_{m-k}(G)=\zeta_k(G),
\end{equation}

\noindent
where \(0\le k=k(G)\le 1\) denotes the \textit{defect of commutativity} of \(G\).
It is closely related to the parameter \(\gamma\), since we have
\(k=0\) \(\Longleftrightarrow\) \(\gamma=0\), and
\(k=1\) \(\Longleftrightarrow\) \(\gamma=\pm 1\).


\begin{remark}
\label{rmk:ParametrizedPresentation}
Note that we have used Nebelung's parameters \(\alpha,\beta,\gamma\)
in the presentation of \(G\), which are denoted
\(\delta,\gamma,-\beta\) by Blackburn
\cite[p. 82, (33), and p. 84, (36), (37)]{Bl},
\(w,z,-a(m-1)\) by Miech
\cite{Mi1},
and \(w,z,a(m-1)\) by Miech
\cite{Mi2},
respectively, in the same order.
\end{remark}



\subsection{\(3\)-groups of non-maximal class}
\label{ss:NonMaxClass3}

\noindent
Let \(G\) be a metabelian \(3\)-group of coclass \(r=\mathrm{cc}(G)\ge 2\)
with abelianization \(G/G^\prime\) of type \((3,3)\).
As before, assume that \(G\) has order \(\lvert G\rvert=3^n\),
nilpotency class \(c=\mathrm{cl}(G)=m-1\), and index of nilpotency \(m\).
Then \(G\) cannot be a CF-group and
must have at least one bicyclic factor
\(\gamma_3(G)/\gamma_4(G)\simeq C_3\times C_3\)
of the lower central series, as will be explained in section \S\
\ref{ss:Exceptions}.
The CF-\textit{invariant} \(e=e(G)\) characterizes cyclic factors of the lower central series,

\begin{equation}
\label{eqn:CFInvariant}
e+1:=\min\left\lbrace\ j\ge 3\ \mid\ \left(\gamma_j(G):\gamma_{j+1}(G)\right)\le 3\ \right\rbrace.
\end{equation}

\noindent
In particular, \(e=2\) \(\Longleftrightarrow\) \(G\) is a CF-group. The resultant structure of the lower central series,

\begin{equation}
\label{eqn:BFandCFofLowerCentral}
\lvert G\rvert=
\underbrace{(G:1)}_{=3^n}=
\underbrace{(G:\gamma_2)}_{=3^2}\cdot
\underbrace{(\gamma_2:\gamma_3)}_{=3}\cdot
\underbrace{(\gamma_3:\gamma_4)\cdots(\gamma_e:\gamma_{e+1})}_{=(3^2)^{e-2}}\cdot
\underbrace{(\gamma_{e+1}:\gamma_{e+2})\cdots(\gamma_c:\gamma_{c+1})}_{=3^{c-e}},
\end{equation}

\noindent
establishes some relations between invariants under isomorphism.
Firstly, we obtain the equation \(c+r=n=2+1+2(e-2)+c-e=c+e-1\),
which connects the coclass \(r\) with the CF-invariant \(e\) by \(r=e-1\).
Secondly, the nilpotency index \(m\) determines an upper bound for the CF-invariant \(e\le c=m-1\),
and thus also for the logarithmic order \(n=c+r=c+e-1\le m-1+m-1-1=2m-3\).
Conversely, the coclass yields a lower bound for the class \(c\ge e=r+1\),
in particular, \(c\ge 3\) for \(r=2\).
In summary, we have the following statements for a group of non-maximal class:

\begin{equation}
\label{eqn:ConnectingInvariants}
r=e-1, \quad e \le m-1, \quad 4\le m<n\le 2m-3.
\end{equation}


Next, we must generalize the two-step centralizer \(\chi_2(G)\) for groups of non-maximal class.
Denote the \textit{centralizer of the two-step factor group} \(\gamma_j(G)/\gamma_{j+2}(G)\)
of the lower central series by

\begin{equation}
\label{eqn:TwoStepCentralizerLow}
\begin{aligned}
\chi_j(G)                 &:= \left\lbrace g\in G\mid\lbrack u,g\rbrack\in\gamma_{j+2}(G), \text{ for all } u\in\gamma_j(G)\right\rbrace, \text{ that is,}\\
\chi_j(G)/\gamma_{j+2}(G) &= \mathrm{Centralizer}_{G/\gamma_{j+2}(G)}(\gamma_j(G)/\gamma_{j+2}(G)), \text{ for each } j\ge 2.
\end{aligned}
\end{equation}

\noindent
\(\chi_j(G)\) is the biggest subgroup of \(G\) with the property
\(\lbrack\chi_j(G),\gamma_j(G)\rbrack\le\gamma_{j+2}(G)\).
These two-step centralizers form an ascending chain,
\(G^\prime\le\chi_2(G)\le\ldots\le\chi_{m-2}(G)<\chi_{m-1}(G)=G\),
of characteristic subgroups of \(G\)
which contain the commutator subgroup \(G^\prime\).
The centralizer \(\chi_j(G)\) coincides with \(G\) if and only if \(j\ge m-1\).
We characterize the smallest two-step centralizer
different from the commutator group
by the \textit{centralizer invariant} \(s=s(G)\):

\begin{equation}
\label{eqn:CentralizerInvariant}
s:=\min\left\lbrace 2\le j\le m-1\mid\chi_j(G)>G^\prime\right\rbrace.
\end{equation}


\noindent
According to Nebelung
\cite[p. 57]{Ne1},
\textit{normalized generators} of \(G=\langle x,y\rangle\) can be selected such that
the first bicyclic factor of the lower central series is generated by their third powers,

\begin{equation}
\label{eqn:GeneratorsLow}
\gamma_3(G)=\langle y^3,x^3,\gamma_4(G)\rangle, \quad \text{ and } x\in G\setminus\chi_s(G), \text{ if } s<m-1, \quad y\in\chi_s(G)\setminus G^\prime.
\end{equation}

\noindent
Denote by
\(s_2:=t_2:=\lbrack y,x\rbrack\in\gamma_2(G)=G^\prime\)
the main commutator of \(G\),
and let \textit{higher iterated commutators} be declared recursively by
\(s_j:=\lbrack s_{j-1},x\rbrack\), \(t_j:=\lbrack t_{j-1},y\rbrack\in\gamma_j(G)\),
for \(j\ge 3\).
Observe that, as explained in Remark
\ref{rmk:SymbolicPowers},
these commutators can be expressed as \textit{symbolic powers}

\begin{equation}
\label{eqn:SymbolicPowersLow3}
s_j=s_2^{(x-1)^{j-2}}, \quad t_j=s_2^{(y-1)^{j-2}}, \text{ for } j\ge 2.
\end{equation}

\noindent
Starting with the powers \(\sigma_3:=y^3\), \(\tau_3:=x^3\in\gamma_3(G)\), let
\(\sigma_j:=\lbrack\sigma_{j-1},x\rbrack\), \(\tau_j:=\lbrack\tau_{j-1},y\rbrack\in\gamma_j(G)\),
for \(j\ge 4\),
and put \(\Sigma_j:=\langle\sigma_j,\ldots,\sigma_{m-1}\rangle\),
\(T_j:=\langle\tau_j,\ldots\tau_{e+1}\rangle\), for \(j\ge 3\).

Finally, let the \textit{defect of commutativity} \(0\le k=k(G)\le 1\) of \(G\) be defined by

\begin{equation}
\label{eqn:DefectLow}
\lbrack\chi_s(G),\gamma_e(G)\rbrack=\gamma_{m-k}(G).
\end{equation}


One of the principal results of Nebelung's thesis
\cite[p. 94]{Ne1},
which is also briefly summarized in
\cite[p. 456]{Ma3},
is the following parametrized presentation.
In the isomorphism class of a metabelian \(3\)-group \(G\) of non-maximal class
with abelianization \(G/G^\prime\) of type \((3,3)\),
there exists a representative \(G_\rho^{m,n}(\alpha,\beta,\gamma,\delta)\)
whose normalized generators satisfy the following relations:

\begin{itemize}

\item
\textit{basic power relations},

\begin{equation}
\label{eqn:LowPowerRel}
\begin{aligned}
s_i^3s_{i+1}^3s_{i+2}=1,                &\quad t_i^3t_{i+1}^3t_{i+2}=1, \quad \text{ for } i\ge 3, \\
\sigma_j^3\sigma_{j+1}^3\sigma_{j+2}=1, &\quad \tau_j^3\tau_{j+1}^3\tau_{j+2}=1, \quad \text{ for } j\ge 3,
\end{aligned}
\end{equation}

\item
\textit{supplementary power relations},

\begin{equation}
\label{eqn:LowSupplPowerRel}
s_2^3s_3^3s_4=\tau_4^{-1}, \quad t_2^3t_3^3t_4=\sigma_4,
\end{equation}

\item
\textit{commutator relations},

\begin{equation}
\label{eqn:LowCommutatorRel}
\lbrack\sigma_j,y\rbrack=1, \quad \lbrack\tau_j,x\rbrack=1, \text{ for } j\ge 3,
\end{equation}

\noindent
expressing the structure \(\gamma_j(G)=\langle\sigma_j,\tau_j,\gamma_{j+1}(G)\rangle=\Sigma_j T_j\)
with \textit{bicyclic or cyclic factors} (BCF) \(\gamma_j(G)/\gamma_{j+1}(G)\), for \(j\ge 3\),
of the lower central series,

\item
parametrized \textit{nilpotency relations} with parameters
\(m\ge 4\) (the index of nilpotency),
\(e\ge 3\) (the CF-invariant), and
\(0\le k\le 1\) (the defect of commutativity),

\begin{equation}
\label{eqn:LowNilpotencyRel}
\begin{aligned}
\sigma_i\ne 1,  \text{ for } i\le m-1, &\quad \sigma_i=1, \text{ for } i\ge m, \\
\tau_j\ne 1, \text{ for } j\le e+k,    &\quad \tau_j=1, \text{ for } j\ge e+k+1,
\end{aligned}
\end{equation}

\noindent
making the structure of the BCF precise,
\(\gamma_j(G)=\langle\sigma_j,\tau_j,\gamma_{j+1}(G)\rangle\)
with bicyclic factor \(\gamma_j(G)/\gamma_{j+1}(G)\simeq C_3\times C_3\), for \(3\le j\le e\), and
\(\gamma_j(G)=\langle\sigma_j,\gamma_{j+1}(G)\rangle\)
with cyclic factor \(\gamma_j(G)/\gamma_{j+1}(G)\simeq C_3\), for \(e+1\le j\le m-1\),

\item
\textit{basic connecting relations} between \(s_i\) and \(\sigma_i\), resp. \(t_i\) and \(\tau_i\),

\begin{equation}
\label{eqn:LowBasicConnectingRel}
\begin{aligned}
\sigma_i=s_{i-2}^3,                 &\quad \tau_i=t_{i-2}^{-3}, \quad \text{ for } i\ge 5, \\
s_j=\sigma_j^{-1}\sigma_{j+1}^{-1}, &\quad t_j=\tau_j\tau_{j+1}, \quad \text{ for } j\ge 5,
\end{aligned}
\end{equation}

\item
parametrized \textit{first central relations} with parameters \(-1\le\beta,\delta,\rho\le 1\),

\begin{equation}
\label{eqn:LowCentralRel}
\begin{aligned}
s_2^{(x-1)(y-1)}                          &=
\lbrack s_3,y\rbrack=\lbrack s_3\sigma_3\sigma_4,y\rbrack=\lbrack t_3,x\rbrack=\lbrack t_3\tau_4^{-1}\tau_5^{-1},x\rbrack=\sigma_{m-1}^{-\rho\delta}\in\zeta_1(G), \\
s_2^{\mathrm{tr}_3(x)+\mathrm{tr}_3(y)-3} &=
s_2^3(s_3t_3)^3s_4t_4=\lbrack s_3\sigma_3\sigma_4,x\rbrack=\lbrack t_3\tau_4^{-1}\tau_5^{-1},y\rbrack=\sigma_{m-1}^{\rho\beta}\in\zeta_1(G), \\
\tau_{e+1}                                &= \sigma_{m-1}^{-\rho}\in\zeta_1(G),
\end{aligned}
\end{equation}

\noindent
where \(\mathrm{tr}_3(x)+\mathrm{tr}_3(y)-3=x^2+x+1+y^2+y+1-3=(x-1)^2+3(x-1)+3+3(y-1)+(y-1)^2\) denotes the \textit{trace element},

\item
and parametrized \textit{second central relations} with additional parameters \(-1\le\alpha,\gamma\le 1\),
which can be viewed as \textit{supplementary connecting relations},

\begin{equation}
\label{eqn:LowSupplConnectingRel}
\begin{aligned}
s_4\sigma_4\sigma_5 &= t_4\tau_4^{-1}\tau_5^{-1} = s_2^{-3}\sigma_4\tau_4^{-1} = s_2^{\mathrm{tr}_3(x)+\mathrm{tr}_3(y)-3}=\sigma_{m-1}^{\rho\beta}\in\zeta_1(G), \\
s_3\sigma_3\sigma_4 &= \sigma_{m-2}^{\rho\beta}\sigma_{m-1}^\gamma\tau_e^\delta, \quad
t_3^{-1}\tau_3\tau_4 = \sigma_{m-2}^{\rho\delta}\sigma_{m-1}^\alpha\tau_e^\beta\in\zeta_2(G),
\end{aligned}
\end{equation}

\noindent
where the first centre is bicyclic \(\zeta_1(G)=\langle\sigma_{m-1},\tau_e\rangle\) for \(k=0\),
but cyclic \(\zeta_1(G)=\langle\sigma_{m-1}\rangle\) for \(k=1\),
and the second centre is tricyclic \(\zeta_2(G)=\langle\sigma_{m-2},\sigma_{m-1},\tau_e\rangle\) for \(k=1\) (with \(m\ge 5\)).
The defect \(k\) is related to \(\rho\) by
\(k=0\) \(\Longleftrightarrow\) \(\rho=0\), and
\(k=1\) \(\Longleftrightarrow\) \(\rho=\pm 1\).

\end{itemize}



\subsection{\(p\)-groups of maximal class}
\label{ss:MaxClassP}

\noindent
Most of the statements concerning \(3\)-groups of maximal class in \S\
\ref{ss:MaxClass3}
remain true for \(p\)-groups of maximal class, 
for an arbitrary prime number \(p\ge 2\).

As before, the \textit{two-step centralizer}
\(\chi_2(G):=\lbrace g\in G\mid\lbrack g,u\rbrack\in\gamma_4(G)\text{ for all }u\in\gamma_2(G)\rbrace\)
of the two-step factor group \(\gamma_2(G)/\gamma_4(G)\) of the lower central series 
is the biggest subgroup of \(G\) such that
\(\lbrack\chi_2(G),\gamma_2(G)\rbrack\le\gamma_4(G)\).
It is characteristic, contains the commutator subgroup \(G^\prime\), and
coincides with \(G\) if and only if \(m=3\).
However, we now have to pay more attention to the \textit{defect of commutativity} \(k=k(G)\) of \(G\)
which is defined by

\begin{equation}
\label{eqn:DefectMaxP}
\lbrack\chi_2(G),\gamma_2(G)\rbrack=\gamma_{m-k}(G)=\zeta_k(G),
\end{equation}

\noindent
and displays a broader variety of possible values than for \(p=3\), namely
\(k=0\) for \(3\le m\le 4\), \(0\le k\le m-4\) for \(m\ge 5\),
and \(0\le k\le\min(m-4,p-2)\) for \(m\ge p+1\),
according to Miech
\cite[p. 331]{Mi1}.

Even more care is required for the parametrized presentation of \(G\),
since it will be crucial for determining the annihilator \(\mathfrak{A}\) of \(G\).
In the isomorphism class of a metabelian \(p\)-group \(G\) of maximal class,
and of order \(\lvert G\rvert=p^m\),
there exists a representative  \(G_a^{m}(z,w)\)
whose normalized generators satisfy the following relations
with a fixed system of parameters
\(a=(a(m-k),\ldots,a(m-1))\), \(w\), and \(z\),
according to Miech
\cite[p. 332]{Mi1}:

\begin{itemize}

\item
parametrized \textit{relations for} \(p\)th \textit{powers} of the generators \(x\), \(y\) and of higher commutators \(s_j\),
with parameters \(0\le w,z\le p-1\),

\begin{equation}
\label{eqn:PowerRelMaxP}
\begin{aligned}
x^p                                                     &= s_{m-1}^w\in\zeta_1(G), \\
y^p\prod_{\ell=2}^p\,s_\ell^{\binom{p}{\ell}}           &= s_{m-1}^z\in\zeta_1(G), \\
s_{j+1}^p\prod_{\ell=2}^p\,s_{j+\ell}^{\binom{p}{\ell}} &= 1, \quad \text{ for } 1\le j\le m-2,
\end{aligned}
\end{equation}

\item
 and a parametrized \textit{commutator relation} with parameters \(0\le a(m-\ell)\le p-1\) for \(1\le\ell\le k\),
 in particular, with non-vanishing parameter \(a(m-k)>0\),

\begin{equation}
\label{eqn:CommutatorRelMaxP}
\lbrack s_2,y\rbrack=\prod_{\ell=1}^k s_{m-\ell}^{a(m-\ell)}\in\lbrack\gamma_2(G),\chi_2(G)\rbrack=\gamma_{m-k}(G).
\end{equation}

\end{itemize}



\section{Annihilators of metabelian \(3\)-groups of maximal class}
\label{s:MaxSmbOrd3}

\begin{remark}
\label{rmk:MaxSmbOrd3}
We continue the traditional notation for particular ideals
occurring as annihilators \(\mathfrak{A}=\mathfrak{A}(G)\) of finite two-generated metabelian \(p\)-groups \(G\),
which was initiated by Scholz and Taussky
\cite{SoTa}
by using old German fraktur letters
\(\mathfrak{L}\), \(\mathfrak{X}\), \(\mathfrak{Z}\), \(\mathfrak{R}\), \(\mathfrak{T}\), \(\mathfrak{V}\).
In the same manner we are going to define new ideals
\(\mathfrak{W}\), \(\mathfrak{Y}\), \(\mathfrak{S}\), \(\mathfrak{U}\),
which were not considered in
\cite{SoTa}
yet. 
\end{remark}


\begin{theorem}
\label{thm:SmbOrdMax3}

(Main Theorem for \(r=1\).)
For a metabelian \(3\)-group \(G\) of coclass \(r=\mathrm{cc}(G)=1\), nilpotency class \(c=\mathrm{cl}(G)=m-1\ge 2\),
that is, with nilpotency index \(m\ge 3\), and defect \(0\le k\le 1\),
which is isomorphic to the representative \(G_\gamma^m(\beta,\alpha)\) with parameters \(-1\le\alpha,\beta,\gamma\le 1\),
where \(\gamma=0\) for \(m\le 4\), the annihilator ideal \(\mathfrak{A}=\mathfrak{A}(G)\) of \(G\) is given by

\begin{equation}
\label{eqn:SmbOrdMax3WithDef}
\mathfrak{A}=\mathfrak{W}_{m-2}(\gamma):=\left(X^{m-2},\ Y-\gamma X^{m-3},\ T_3(X)\right)
\end{equation}

\noindent
with \(T_3(X):=X^2+3X+3\). In particular, if the defect of \(G\) is \(k=0\), that is, \(\gamma=0\), then

\begin{equation}
\label{eqn:SmbOrdMax3}
\mathfrak{A}=\mathfrak{Y}_{m-2}:=\mathfrak{W}_{m-2}(0)=\left(X^{m-2},\ Y,\ T_3(X)\right).
\end{equation}

\end{theorem}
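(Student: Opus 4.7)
The plan is to prove the equality $\mathfrak{A} = \mathfrak{W}_{m-2}(\gamma)$ by the standard two-step argument: first establish the containment $\mathfrak{W}_{m-2}(\gamma) \subseteq \mathfrak{A}$ by showing that each of the three named generators lies in the kernel of $\Psi$, and then check that the cardinalities of the two residue class rings agree, so that the surjection $\mathbb{Z}\lbrack X,Y\rbrack/\mathfrak{W}_{m-2}(\gamma) \twoheadrightarrow \mathbb{Z}\lbrack X,Y\rbrack/\mathfrak{A}$ forced by the first step is automatically an isomorphism.

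The first step is a direct symbolic-power bookkeeping using Remark \ref{rmk:SymbolicPowers} and Formula \eqref{eqn:SymbolicPowersMax3}. The monomial $X^{m-2}$ maps to $s_2^{(x-1)^{m-2}} = s_m$, trivial by the nilpotency relation \eqref{eqn:NilpotencyRelMax3}; the binomial $Y - \gamma X^{m-3}$ maps to $\lbrack s_2,y\rbrack \cdot s_{m-1}^{-\gamma}$, trivial by the commutator relation \eqref{eqn:CommutatorRelMax3}; and substituting $X = x-1$ into $T_3(X) = X^2 + 3X + 3$ expands to $s_2^3 s_3^3 s_4$, trivial by the $j = 2$ instance of the power relation in \eqref{eqn:PowerRelMax3}. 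Each verification is one line and exhibits the origin of each ideal generator in a specific family of group relations.

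For the cardinality matching, Corollary \ref{cor:CommutatorSubgroup} gives $\lvert\mathbb{Z}\lbrack X,Y\rbrack/\mathfrak{A}\rvert = \lvert G^\prime\rvert = \lvert G\rvert/\lvert G/G^\prime\rvert = 3^{m-2}$, so it suffices to show $\lvert\mathbb{Z}\lbrack X,Y\rbrack/\mathfrak{W}_{m-2}(\gamma)\rvert = 3^{m-2}$. The relation $Y \equiv \gamma X^{m-3}$ eliminates $Y$ entirely, producing the isomorphism $\mathbb{Z}\lbrack X,Y\rbrack/\mathfrak{W}_{m-2}(\gamma) \cong \mathbb{Z}\lbrack X\rbrack/(X^{m-2}, T_3(X))$. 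The decisive observation is that $T_3(X)$ is the minimal polynomial of $\zeta - 1$, where $\zeta$ denotes a primitive cube root of unity, so $\mathbb{Z}\lbrack X\rbrack/(T_3(X))$ is canonically the ring of Eisenstein integers $\mathbb{Z}\lbrack\zeta\rbrack$, in which the image of $X$ is a uniformizer $\pi := \zeta - 1$ of the unique prime above $3$ with $(\pi)^2 = (3)$ and residue field $\mathbb{F}_3$. This yields $\lvert\mathbb{Z}\lbrack\zeta\rbrack/(\pi)^{m-2}\rvert = 3^{m-2}$, as required.

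The main obstacle lies in this second step: the reverse inclusion $\mathfrak{W}_{m-2}(\gamma) \supseteq \mathfrak{A}$ is not immediately accessible by a direct manipulation of ideal generators and would otherwise require a Gröbner-basis-style reduction inside $\mathbb{Z}\lbrack X,Y\rbrack$. Recasting the question as a cardinality check and recognising $T_3(X)$ as the shifted cyclotomic polynomial $\Phi_3(X+1)$ converts the problem into a transparent $\pi$-adic valuation in a Dedekind domain, which completely sidesteps the combinatorial difficulty. Once \eqref{eqn:SmbOrdMax3WithDef} is established, the special case \eqref{eqn:SmbOrdMax3} follows by the specialisation $\gamma = 0$.
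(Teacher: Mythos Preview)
Your argument is correct, and the containment step \(\mathfrak{W}_{m-2}(\gamma)\subseteq\mathfrak{A}\) is exactly what the paper does: it reads off \(X^{m-2}\in\mathfrak{A}\) from the nilpotency relation~\eqref{eqn:NilpotencyRelMax3}, \(Y-\gamma X^{m-3}\in\mathfrak{A}\) from the commutator relation~\eqref{eqn:CommutatorRelMax3}, and \(T_3(X)\in\mathfrak{A}\) from the power relation~\eqref{eqn:PowerRelMax3} with \(j=2\). Where you diverge is in the treatment of the reverse inclusion. The paper's proof records only the additional datum \(X^{m-3}\notin\mathfrak{A}\) and stops, effectively leaving the equality \(\mathfrak{A}=\mathfrak{W}_{m-2}(\gamma)\) to the implicit principle that translating a complete set of defining relators yields the full annihilator (with the structure calculations in \S\ref{s:StrComSbg} serving as an a posteriori consistency check). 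Your route closes this gap explicitly: eliminating \(Y\) and recognising \(T_3(X)=\Phi_3(X+1)\) identifies \(\mathbb{Z}[X,Y]/\mathfrak{W}_{m-2}(\gamma)\) with \(\mathbb{Z}[\zeta]/(\pi^{m-2})\) in the Eisenstein integers, whose order \(3^{m-2}=\lvert G'\rvert\) forces the surjection onto \(\mathbb{Z}[X,Y]/\mathfrak{A}\) to be an isomorphism. This buys you a self-contained proof of the equality without appeal to presentation theory, and as a bonus it transparently recovers the nearly homocyclic structure of \(G'\) (Theorem~\ref{thm:ComSbgMaxP} for \(p=3\)) directly from the \(\pi\)-adic filtration, rather than via the separate order computations of \S\ref{ss:ComSbgMaxP}.
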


\begin{proof}
We systematically transform relators of the group \(G\)
into generators of the ideal \(\mathfrak{A}\) in the ring \(\mathbb{Z}\lbrack X,Y\rbrack\).

According to the nilpotency relations in Formula
\eqref{eqn:NilpotencyRelMax3},
we have
\(1=s_m=s_2^{(x-1)^{m-2}}\) for \(j=m\),
but
\(1\ne s_{m-1}=s_2^{(x-1)^{m-3}}\) for \(j=m-1\).
Thus, the definition of the epimorphism \(\Psi\) with kernel \(\ker(\Psi)=\mathfrak{A}\) shows that
\(X^{m-2}\in\mathfrak{A}\), but \(X^{m-3}\notin\mathfrak{A}\).

By the commutator relations in Formula
\eqref{eqn:CommutatorRelMax3},
we obtain
\(1=\lbrack s_3,y\rbrack=s_3^{y-1}=s_2^{(x-1)(y-1)}\) for \(j=3\),
and \(s_2^{y-1}=\lbrack s_2,y\rbrack=s_{m-1}^\gamma=s_2^{(x-1)^{m-3}\cdot\gamma}\),
resp. \(1=s_2^{(y-1)-\gamma(x-1)^{m-3}}\) for \(j=2\),
which implies the inclusions
\(XY\in\mathfrak{A}\) and \(Y-\gamma X^{m-3}\in\mathfrak{A}\).
However, the monomial \(XY\) is superfluous as a generator of \(\mathfrak{A}\),
since it is a multiple
\(X\cdot\left(Y-\gamma X^{m-3}\right)=XY-\gamma X^{m-2}\equiv XY\pmod{\mathfrak{A}}\)
of \(Y-\gamma X^{m-3}\).

The third power relation in Formula
\eqref{eqn:PowerRelMax3}
yields
\(1=s_2^3s_3^3s_4=s_2^3s_2^{(x-1)\cdot 3}s_2^{(x-1)^2}=s_2^{(x-1)^2+3(x-1)+3}\)
\(=s_2^{x^2+x+1}=s_2^{\mathrm{tr}_3(x)}\) for \(j=2\).
Consequently, the pre-image \(X^2+3X+3\in\mathfrak{A}\) of the \textit{trace element}
\(\mathrm{tr}_3(x):=x^2+x+1=(x-1)^2+3(x-1)+3\)
under \(\psi\) lies in the annihilator.
\end{proof}


\noindent
The groups in Theorem
\ref{thm:SmbOrdMax3},
which can be realized as second \(3\)-class groups of quadratic fields
\cite{Ma4},
have transfer kernel types (TKTs) \(\mathrm{a.1/2/3}\)
\cite{Ma2}.


In the preceding proof, we have not exploited all power relations yet.
The remaining relations for the powers \(x^3,y^3\) of the normalized generators \(x,y\)
can be used for determining two particular polynomials \(F_1,F_2\in\mathbb{Z}\lbrack X,Y\rbrack\)
which enter the \textit{Schreier conditions} for the existence of a metabelian \(3\)-group in
\cite[\S\ 3, Satz III]{Sr1}
and
\cite[pp. 321--322, and Satz 1, p. 325]{Sr2}.

\begin{corollary}
\label{cor:SchreierRelMax3}

(Schreier conditions.)
Under the assumptions of Theorem
\ref{thm:SmbOrdMax3},
the powers \(x^3,y^3\) of the normalized generators \(x,y\) of \(G\)
can be expressed as \(x^3=s_2^{F_1}\) and \(y^3=s_2^{F_2}\)
with polynomials

\begin{equation}
\label{eqn:SchreierRelMax3}
\begin{aligned}
 & F_1=\alpha X^{m-3}, \quad F_2=\beta X^{m-3}-X-3 \quad \in\mathbb{Z}\lbrack X,Y\rbrack, \quad
 \text{satisfying the Schreier conditions} \\
 & F_1\cdot X\equiv 0, \quad F_2\cdot Y\equiv 0, \quad F_1\cdot Y\equiv -(X^2+3X+3), \quad F_2\cdot X\equiv Y^2+3Y+3 \pmod{\mathfrak{A}}.
\end{aligned}
\end{equation}

\end{corollary}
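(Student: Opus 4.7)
The plan is to read off the polynomials \(F_1\) and \(F_2\) from the two unused power relations in \eqref{eqn:PowerRelMax3}, and then verify each of the four Schreier congruences by direct reduction modulo the three generators of \(\mathfrak{A}\). From \(x^3=s_{m-1}^\alpha\) combined with \(s_{m-1}=s_2^{(x-1)^{m-3}}\) (formula \eqref{eqn:SymbolicPowersMax3}), one reads off \(F_1=\alpha X^{m-3}\) at once. Solving the second power relation \(y^3 s_2^3 s_3=s_{m-1}^\beta\) for \(y^3\) and translating through \(\psi\) via \(s_3=s_2^{x-1}\) gives \(F_2=\beta X^{m-3}-X-3\).

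For the four congruences the common strategy is to substitute \(Y\equiv\gamma X^{m-3}\) everywhere and absorb high-degree \(X\)-monomials into \((X^{m-2})\subseteq\mathfrak{A}\). The condition \(F_1\cdot X=\alpha X^{m-2}\) is then immediate. The one step that requires genuine thought, and the main obstacle, is controlling the stray \(3X^{m-3}\)-type terms that emerge after the substitution. The key observation is that \(3X^{m-3}\in\mathfrak{A}\): indeed, \(X^{m-3}\cdot T_3(X)=X^{m-1}+3X^{m-2}+3X^{m-3}\) lies in \(\mathfrak{A}\), while its first two summands are already in \((X^{m-2})\), so the third summand must also lie in \(\mathfrak{A}\).

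With this reduction in hand, the remaining three congruences follow by routine bookkeeping. The product \(F_2\cdot Y\) reduces modulo \(Y-\gamma X^{m-3}\) to \(\gamma\beta X^{2m-6}-\gamma X^{m-2}-3\gamma X^{m-3}\), each summand in \(\mathfrak{A}\) (the first because \(2m-6\ge m-2\) whenever \(m\ge 4\), and the case \(\gamma\ne 0\) forces \(m\ge 5\) by the hypothesis \(\gamma=0\) for \(m\le 4\)). Similarly \(F_1\cdot Y\) reduces to \(\alpha\gamma X^{2m-6}\equiv 0\), matching \(-T_3(X)\equiv 0\pmod{\mathfrak{A}}\). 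Finally, \(F_2\cdot X-T_3(Y)=\beta X^{m-2}-T_3(X)-Y(Y+3)\), and the residual \(Y(Y+3)\) is disposed of by the same substitution combined with the key reduction. The corner case \(m=3\), where \(\gamma=0\) and \(\mathfrak{A}=(X,Y,T_3(X))=(X,Y,3)\), should be checked separately, but all four congruences there collapse to trivial identities in \(\mathbb{Z}/3\mathbb{Z}\).
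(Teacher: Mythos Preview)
Your derivation of \(F_1\) and \(F_2\) from the first two power relations in \eqref{eqn:PowerRelMax3} matches the paper exactly. Where you diverge is in the verification of the four Schreier congruences. You argue by direct polynomial reduction modulo the explicit generators \(X^{m-2}\), \(Y-\gamma X^{m-3}\), \(T_3(X)\) of \(\mathfrak{A}\), with the key auxiliary fact \(3X^{m-3}\in\mathfrak{A}\); this is correct and the case analysis (including the corner cases \(\gamma=0\) and \(m=3\)) goes through as you describe.

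The paper takes a different and more conceptual route: it never reduces modulo the explicit generators of \(\mathfrak{A}\) at all. Instead it observes that the four Schreier conditions are formal consequences of the group-theoretic identities \(\lbrack x^3,x\rbrack=1\), \(\lbrack y^3,y\rbrack=1\), and the power rule \(\lbrack x^3,y\rbrack=\lbrack x,y\rbrack^{\mathrm{tr}_3(x)}\), \(\lbrack y^3,x\rbrack=\lbrack y,x\rbrack^{\mathrm{tr}_3(y)}\), applied to \(s_2^{F_i}=x^3,y^3\). This argument uses nothing about the particular shape of \(F_1,F_2\) or of \(\mathfrak{A}\), so it shows that the Schreier conditions hold for \emph{any} preimages of \(x^3,y^3\) under \(\Psi\) and for any two-generated metabelian \(p\)-group. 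Your computation, by contrast, confirms the congruences only for this specific \(\mathfrak{A}\) and these specific \(F_i\); it is a genuine verification but does not explain why the conditions are structural rather than accidental.
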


\begin{proof}
Since the abelianization \(G/G^\prime\) of the group \(G=\langle x,y\rangle\) is of type \((3,3)\),
the powers \(x^3,y^3\) of the normalized generators \(x,y\) are contained in the commutator subgroup \(G^\prime\).
According to Furtw\"angler's epimorphism
\(\Psi:\,\mathbb{Z}\lbrack X,Y\rbrack\to G^\prime\), \(f(X,Y)\mapsto s_2^{f(x-1,y-1)}\),
in Formula
\eqref{eqn:Furtwaengler},
there exist \(F_1,F_2\in\mathbb{Z}\lbrack X,Y\rbrack\) such that
\(x^3=s_2^{F_1(x-1,y-1)}\) and \(y^3=s_2^{F_2(x-1,y-1)}\).
Using the first and second power relation in Formula
\eqref{eqn:PowerRelMax3}
we therefore obtain

\begin{equation*}
\begin{aligned}
s_2^{F_1(x-1,y-1)}=x^3                       &= s_{m-1}^\alpha=s_2^{(x-1)^{m-3}\cdot\alpha}, \\
s_2^{F_2(x-1,y-1)}s_2^3s_2^{x-1}=y^3s_2^3s_3 &= s_{m-1}^\beta=s_2^{(x-1)^{m-3}\cdot\beta},
\end{aligned}
\end{equation*}

\noindent
resp. \(s_2^{F_1(x-1,y-1)-\alpha(x-1)^{m-3}}=1\), \(s_2^{F_2(x-1,y-1)+3+(x-1)-\beta(x-1)^{m-3}}=1\),
and thus the polynomials
\(F_1(X)\equiv\alpha X^{m-3}\), \(F_2(X)\equiv\beta X^{m-3}-X-3\) modulo \(\mathfrak{A}\)
are in fact \textit{univariate}.

In the present context, we have the commutator relations
\(\lbrack x^3,x\rbrack=1\),
\(\lbrack y^3,y\rbrack=1\), trivially, and
\(\lbrack x^3,y\rbrack=\lbrack x,y\rbrack^{\mathrm{tr}_3(x)}\) with \(\mathrm{tr}_3(x)=1+x+x^2=(x-1)^2+3(x-1)+3\),\\
\(\lbrack y^3,x\rbrack=\lbrack y,x\rbrack^{\mathrm{tr}_3(y)}\) with \(\mathrm{tr}_3(y)=1+y+y^2=(y-1)^2+3(y-1)+3\),\\
by the general power rule for commutators.

Consequently, the \textit{Schreier conditions} can be expressed in the form\\
\(s_2^{F_1(x-1,y-1)\cdot(x-1)}=x^{3(x-1)}=\lbrack x^3,x\rbrack=1\), \quad
\(s_2^{F_2(x-1,y-1)\cdot(y-1)}=y^{3(y-1)}=\lbrack y^3,y\rbrack=1\), and\\
\(s_2^{F_1(x-1,y-1)\cdot(y-1)}=x^{3(y-1)}=\lbrack x^3,y\rbrack=s_2^{-\mathrm{tr}_3(x)}\), \quad
\(s_2^{F_2(x-1,y-1)\cdot(x-1)}=y^{3(x-1)}=\lbrack y^3,x\rbrack=s_2^{\mathrm{tr}_3(y)}\),
that is, as congruences modulo \(\mathfrak{A}\),
\(F_1(X,Y)\cdot X\equiv F_2(X,Y)\cdot Y\equiv 0\), and with \textit{trace polynomials}
\(F_1(X,Y)\cdot Y\equiv -T_3(X):=-(X^2+3X+3)\), \quad
\(F_2(X,Y)\cdot X\equiv T_3(Y):=Y^2+3Y+3\).
\end{proof}

The \textit{Schreier conditions} were used essentially
by Scholz and Taussky
\cite[(1)--(5), p. 32]{SoTa},
and by Szekeres
\cite[(3.17)--(3.22), pp. 280--281]{Sz1}
and
\cite[(5.12)--(5.15), pp. 344--345]{Sz2}.
They were also mentioned
by Brink
\cite[p. 32]{Br},
and by Nebelung
\cite[p. 14, and Satz 2.4.5 (iv), p. 45]{Ne1}.



\section{Annihilators of metabelian \(3\)-groups of non-maximal class}
\label{s:LowSmbOrd}

\noindent
For the annihilator \(\mathfrak{A}\) of groups \(G\) of coclass \(\mathrm{cc}(G)\ge 2\),
the proof of the analogue of Theorem
\ref{thm:SmbOrdMax3}
is less straight forward,
requires intricate distinctions of cases,
and will be done in several steps,
since groups with small nilpotency class and small coclass need
additional considerations.



\subsection{Succinct survey of exceptional groups}
\label{ss:Exceptions}

\noindent
In this preparatory section,
we need a minimum of familiarity with rooted descendant trees of finite \(3\)-groups,
as presented in detail in
\cite{Ma6}.
Further, we use identifiers of the form \(\langle\text{order},\text{counter}\rangle\)
as given in the SmallGroups database
\cite{BEO1,BEO2}.
Since we are dealing with groups \(G\) having abelianization \(G/G^\prime\) of type \((3,3)\),
the descendant tree \(\mathcal{T}(R)\) of the abelian root \(R=\langle 9,2\rangle\simeq C_3\times C_3\)
contains all the groups under investigation,
when we restrict it to its metabelian skeleton.
The groups \(G\) of maximal class (coclass \(\mathrm{cc}(G)=1\)) in \S\ 
\ref{ss:MaxClass3}
and \S\
\ref{s:MaxSmbOrd3}
arise as successive descendants of step size \(1\) starting from the root \(R\),
as drawn in
\cite[Fig. 3, p. 167]{Ma6}.
Consequently,
they are CF-groups with cyclic factors \(\gamma_j(G)/\gamma_{j+1}(G)\simeq C_3\), for all \(2\le j\le m-1\).

The groups \(G\) of second maximal and lower class (coclass \(\mathrm{cc}(G)\ge 2\)) in \S\
\ref{ss:NonMaxClass3}
and in the present section \S\
\ref{s:LowSmbOrd}
start with seven descendants \(\langle 243,3\ldots 9\rangle\)
of step size \(2\) of the parent \(\langle 27,3\rangle\), which is not coclass settled with nuclear rank \(2\),
as shown in
\cite[Fig. 4, p. 171]{Ma6}.
These seven isomorphism classes of coclass \(r=2\) share the common nilpotency class \(c=3\), resp. nilpotency index \(m=4\),
and belong to the stem \(\Phi_6(0)\) of Hall's isoclinism family \(\Phi_6\)
\cite{Hl}.
Required information on these groups and relational parameters will be given in the Tables
\ref{tbl:SmlBicExpX}
and
\ref{tbl:SmlBicExpY}.
The bifurcation at the vertex \(B=\langle 27,3\rangle\) is the reason why all groups \(G\) of non-maximal class,
which share the common class-\(2\) quotient \(G/\gamma_3(G)\simeq B\),
are non-CF groups (i.e., BCF-groups) whose lower central series contains at least one bicyclic factor
\(\gamma_3(G)/\gamma_4(G)\simeq C_3\times C_3\).

The possibilities for groups with coclass \(r=2\), class \(c=4\), resp. \(m=5\), and defect \(k=1\)
are more extensive and consist of twelve isomorphism classes, namely
\(\langle 729,34\ldots 36\rangle\) in \(\Phi_{40}(0)\),
\(\langle 729,37\ldots 39\rangle\) in \(\Phi_{41}(0)\),
\(\langle 729,44\ldots 47\rangle\) in \(\Phi_{42}(0)\), and
\(\langle 729,56\ldots 57\rangle\) in \(\Phi_{43}(0)\)
\cite{Ef,Jm}.
They are represented by tiny full discs in
\cite[Fig. 4, p. 171]{Ma6}.
Data for them will be given in Table
\ref{tbl:SmlCycExpX}.



\subsection{The smallest power of \(X\in\psi^{-1}\lbrace x-1\rbrace\) within \(\mathfrak{A}\)}
\label{ss:LowPowX}

\noindent
Due to the \textit{nilpotency relations} for \(\sigma_i\) in Formula
\eqref{eqn:LowNilpotencyRel},
\(\sigma_i=1\), for \(i\ge m\),
and the \textit{basic connecting relations} for \(s_i\) in Formula
\eqref{eqn:LowBasicConnectingRel},
\(s_i\sigma_i\sigma_{i+1}=1\), for \(i\ge 5\),
we have an inclusion for the exponent \(m-2\),
\[s_2^{(x-1)^{m-2}}=s_m=\sigma_m^{-1}\sigma_{m+1}^{-1}=1\cdot 1=1,
\text{ and thus } X^{m-2}\in\mathfrak{A}, \text{ for } m\ge 5.\]
For the exponent \(m-3\), however, we obtain an exclusion,
\[s_2^{(x-1)^{m-3}}=s_{m-1}=\sigma_{m-1}^{-1}\sigma_m^{-1}=\sigma_{m-1}^{-1}\cdot 1=\sigma_{m-1}^{-1}\ne 1,
\text{ and thus } X^{m-3}\not\in\mathfrak{A}, \text{ for } m\ge 6.\]

\begin{remark}
\label{rmk:LowPowX}
An alternative argument for the inclusion \(X^{m-2}\in\mathfrak{A}\) is the fact that
the iterated commutator \(s_m\) lies in \(\gamma_m(G)=1\), even for any \(m\ge 4\),
but this kind of argument is not suitable for proving the exclusion \(X^{m-3}\not\in\mathfrak{A}\),
because although \(s_{m-1}\) is contained in \(\gamma_{m-1}(G)>1\), it is nevertheless possible that \(s_{m-1}=1\).
\end{remark}

\begin{proposition}
\label{prp:LowPowX}

Let \(G\) be a metabelian \(3\)-group with abelianization \(G/G^\prime\) of type \((3,3)\).
Suppose \(G\) is of order \(\lvert G\rvert=3^n\ge 3^5\), nilpotency class \(c=\mathrm{cl}(G)=m-1\ge 3\), coclass \(r=\mathrm{cc}(G)=e-1\ge 2\),
and defect \(0\le k\le 1\), where \(4\le m<n\le 2m-3\) and \(e=n-m+2\ge 3\).
Then the annihilator ideal \(\mathfrak{A}\unlhd\mathbb{Z}\lbrack X,Y\rbrack\) of \(G\)
contains \(X^{m-2}\) as the power of \(X\) with minimal exponent.

The unique exception are the four groups with SmallGroup identifiers \(\langle 729,44\ldots 47\rangle\)
\cite{BEO1,BEO2},
which are of order \(3^6\), nilpotency class \(4\), and coclass \(2\),
that is, \(n=6\), \(m=5\), and \(e=3\).
For these groups, the smallest power of \(X\) contained in the annihilator \(\mathfrak{A}\) is \(X^2\) with exponent \(m-3\).

\end{proposition}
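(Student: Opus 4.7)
The plan is to partition the verification into three regimes according to the nilpotency index $m$, because the \emph{basic connecting relation} $s_i=\sigma_i^{-1}\sigma_{i+1}^{-1}$ of \eqref{eqn:LowBasicConnectingRel} is available only once $i\ge 5$. In the generic regime $m\ge 6$, the two displayed calculations sitting immediately before the statement already establish both halves of the minimality claim: the nilpotency relations \eqref{eqn:LowNilpotencyRel} together with the basic connecting relation give $s_m=\sigma_m^{-1}\sigma_{m+1}^{-1}=1$ (whence $X^{m-2}\in\mathfrak{A}$) and $s_{m-1}=\sigma_{m-1}^{-1}\sigma_m^{-1}=\sigma_{m-1}^{-1}\ne 1$ (whence $X^{m-3}\notin\mathfrak{A}$). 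It therefore remains to treat the finite residual list $m\in\{4,5\}$.

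For the intermediate case $m=5$ the containment $X^3\in\mathfrak{A}$ still follows from $s_5=\sigma_5^{-1}\sigma_6^{-1}=1$, but the decisive new point is the behaviour of $s_{m-1}=s_4$, which the basic connecting relation is too weak to resolve. I would instead evaluate $s_4$ through the \emph{supplementary connecting relation} $s_4\sigma_4\sigma_5=\sigma_{m-1}^{\rho\beta}$ from \eqref{eqn:LowSupplConnectingRel}. Using $\sigma_5=\sigma_m=1$ from \eqref{eqn:LowNilpotencyRel} together with $\sigma_{m-1}=\sigma_4$, this collapses to $s_4=\sigma_4^{\rho\beta-1}$. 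Since $\sigma_4$ has exact order $3$ (being a nontrivial generator of $\gamma_4(G)/\gamma_5(G)=\gamma_4(G)$, with $\sigma_4^3=1$ by the power relation $\sigma_4^3\sigma_5^3\sigma_6=1$), the identity $s_4=1$ is equivalent to $\rho\beta\equiv 1\pmod 3$; given $\rho,\beta\in\{-1,0,1\}$ this forces $\rho\beta=1$, i.e., $\rho=\beta=\pm 1$. Matching this parameter condition against Nebelung's normalization of the twelve candidate groups of order $3^6$ with $m=5$ should single out precisely the four representatives $\langle 729,44\rangle,\ldots,\langle 729,47\rangle$ of the isoclinism family $\Phi_{42}(0)$. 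For these four exceptional groups one still needs the sharpness $X\notin\mathfrak{A}$, i.e.\ $s_3\ne 1$; this is immediate because the class $c=4$ forces $\gamma_3(G)\supsetneq\gamma_4(G)\supsetneq 1$, and the supplementary relation \eqref{eqn:LowSupplConnectingRel} for $s_3$ excludes collapse modulo $\gamma_4(G)$.

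The remaining case $m=4$ covers the seven stem groups $\langle 243,3\rangle,\ldots,\langle 243,9\rangle$ of $\Phi_6(0)$ (coclass $2$, class $3$, order $3^5$). Here $X^{m-2}=X^2\in\mathfrak{A}$ is immediate from $s_4\in\gamma_4(G)=1$ by the argument of Remark \ref{rmk:LowPowX}, and the exclusion $X^{m-3}=X\notin\mathfrak{A}$ reduces to checking $s_3\ne 1$ for each of the seven isomorphism classes. Specializing the supplementary connecting relation \eqref{eqn:LowSupplConnectingRel} at $m=4$, $e=3$ expresses $s_3$ as an explicit word in $\sigma_3,\tau_3$ and the structure parameters $\beta,\gamma,\delta,\rho$, and inspection of the normalized parameter data compiled in the forthcoming tables of \S\ref{ss:Exceptions} rules out simultaneous vanishing of the relevant exponents in every case.

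The principal obstacle is the bookkeeping of the $m=5$ analysis: translating the algebraic condition $\rho\beta=1$ into the SmallGroups labelling of the twelve groups in $\Phi_{40}(0)\cup\Phi_{41}(0)\cup\Phi_{42}(0)\cup\Phi_{43}(0)$ and confirming that the exceptional locus is exactly $\Phi_{42}(0)$. Once this matching is in hand, the remainder of the argument is a routine combination of the nilpotency, basic connecting, and supplementary connecting relations recorded in \S\ref{ss:NonMaxClass3}.
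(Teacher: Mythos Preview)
Your approach is essentially the paper's: partition by $m$, handle $m\ge 6$ via the pre-statement display, and for $m\in\{4,5\}$ fall back on the supplementary connecting relation \eqref{eqn:LowSupplConnectingRel} for $s_4$ and $s_3$, matching the resulting parameter conditions against Nebelung's tables. The paper carries out exactly this program, recording the outcome in Tables~\ref{tbl:SmlCycExpX} and~\ref{tbl:SmlBicExpX}.

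One point deserves tightening. For the four exceptional groups with $m=5$, $\rho\beta=1$, your argument that $s_3\ne 1$ (``the supplementary relation for $s_3$ excludes collapse modulo $\gamma_4$'') is not self-contained: the relation gives $s_3=\sigma_3^{\rho\beta-1}\sigma_4^{\gamma-1}\tau_3^{\delta}=\sigma_4^{\gamma-1}\tau_3^{\delta}$, and one needs the additional datum $\delta=1$ for these four classes to force a nontrivial $\tau_3$-component. The appeal to $\gamma_3\supsetneq\gamma_4$ alone is insufficient, precisely for the reason flagged in Remark~\ref{rmk:LowPowX}. The paper supplies this by reading $\delta=1$ off the parameter table; you should do the same rather than leave it implicit.
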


\begin{proof}
It still remains to investigate,

\begin{itemize}
\item
if \(X^2\in\mathfrak{A}\) for \(m=4\),
\item
if \(X^2\not\in\mathfrak{A}\) for \(m=5\),
\item
if \(X\not\in\mathfrak{A}\) for \(m=4\).
\end{itemize}

\noindent
To this end, we use the \textit{second central relation} for \(s_4\) in Formula
\eqref{eqn:LowSupplConnectingRel},
\(s_4\sigma_4\sigma_5=\sigma_{m-1}^{\rho\beta}\),
viewed as a \textit{supplementary connecting relation},
which implies that
\[s_2^{(x-1)^2}=s_4=\sigma_4^{-1}\sigma_5^{-1}\sigma_{m-1}^{\rho\beta}.\]
For \(m=4\), where always \(k=0\), and thus \(\rho=0\),
this means that
\[s_2^{(x-1)^2}=\sigma_4^{-1}\sigma_5^{-1}=1\cdot 1=1, \text{ and thus } X^2\in\mathfrak{A}.\]
For \(m=5\), we obtain an expression which depends on the relational parameters \(\rho\) and \(\beta\),
\[s_2^{(x-1)^2}=\sigma_4^{\rho\beta-1}, \text{ that is, }
s_2^{(x-1)^2}=1,\ X^2\in\mathfrak{A}, \text{ for } \rho\beta=1, \text{ and }
s_2^{(x-1)^2}\ne 1,\ X^2\not\in\mathfrak{A}, \text{ for } \rho\beta\ne 1.\]
For the twelve isomorphism classes of groups with \(m=5\), \(n=6\), \(\rho=\pm 1\),
and generally for \(m=5\), \(\rho=0\), Table
\ref{tbl:SmlCycExpX}
gives the relevant information, according to
\cite[p. 4--7]{Ne2},
and
\cite{BEO2}.

\begin{table}[ht]
\caption{Relational parameters for \(m=5\), \(n=6\), \(e=3\), \(\rho=\pm 1\) or \(\rho=0\)}
\label{tbl:SmlCycExpX}
\begin{center}
\begin{tabular}{|c|l|c|rrr|c|}
\hline
 Groups                             & TKT  & Classes & \(\beta\) & \(\delta\) & \(\rho\) & \(\sigma_4^{\rho\beta-1}\) \\
\hline
 \(\langle 729,37\ldots 39\rangle\) & b.10 &   \(3\) &     \(0\) &      \(0\) &    \(1\) &          \(\sigma_4^{-1}\) \\
 \(\langle 729,44\ldots 47\rangle\) & H.4  &   \(4\) &     \(1\) &      \(1\) &    \(1\) &                      \(1\) \\
 \(\langle 729,56\ldots 57\rangle\) & G.19 &   \(2\) &    \(-1\) &      \(0\) &    \(1\) &          \(\sigma_4^{-2}\) \\
 \(\langle 729,34\ldots 36\rangle\) & b.10 &   \(3\) &     \(0\) &      \(0\) &   \(-1\) &          \(\sigma_4^{-1}\) \\
\hline
 generally                          &      &         &           &            &    \(0\) &          \(\sigma_4^{-1}\) \\
\hline
\end{tabular}
\end{center}
\end{table}

\noindent
Only in the exceptional case of the four isomorphism classes \(\langle 729,44\ldots 47\rangle\)
with TKT (transfer kernel type) H.4
\cite{Ma2},
\(m=5\), and \(\rho\beta=1\), we obtain an inclusion \(X^2\in\mathfrak{A}\).
In the other cases of \(m=5\), we always have \(\rho\beta\ne 1\), and thus \(X^2\not\in\mathfrak{A}\).

\noindent
Finally, we investigate,
if \(X\not\in\mathfrak{A}\) for \(m=4\) and for the TKT H.4 with \(m=5\), \(\rho\beta=1\),
by employing the \textit{second central relation} for \(s_3\) in Formula
\eqref{eqn:LowSupplConnectingRel},
\(s_3\sigma_3\sigma_4=\sigma_{m-2}^{\rho\beta}\sigma_{m-1}^\gamma\tau_e^\delta\),
viewed as a \textit{supplementary connecting relation},
which shows that
\[s_2^{x-1}=s_3=\sigma_3^{-1}\sigma_4^{-1}\sigma_{m-2}^{\rho\beta}\sigma_{m-1}^\gamma\tau_e^\delta,\]
that is, \(s_2^{x-1}=\sigma_3^{\gamma-1}\tau_3^\delta\) for \(m=4\), where \(\rho=0\) and \(e=3\),
and \(s_2^{x-1}=\sigma_3^{\rho\beta-1}\sigma_4^{\gamma-1}\tau_e^\delta\) for \(m=5\).
For the seven isomorphism classes of groups with \(m=4\),
we have Table
\ref{tbl:SmlBicExpX},
by
\cite[p. 1--3]{Ne2},
and
\cite{BEO2}.

\begin{table}[ht]
\caption{Relational parameters for \(m=4\), \(n=5\), \(e=3\)}
\label{tbl:SmlBicExpX}
\begin{center}
\begin{tabular}{|c|l|rrrr|c|}
\hline
 Group                    & TKT  & \(\alpha\) & \(\beta\) & \(\gamma\) & \(\delta\) & \(\sigma_3^{\gamma-1}\tau_3^\delta\) \\
\hline
 \(\langle 243,3\rangle\) & b.10 &      \(0\) &     \(0\) &      \(0\) &      \(0\) &                    \(\sigma_3^{-1}\) \\
 \(\langle 243,8\rangle\) & c.21 &      \(0\) &     \(0\) &      \(0\) &      \(1\) &              \(\sigma_3^{-1}\tau_3\) \\
 \(\langle 243,6\rangle\) & c.18 &      \(0\) &    \(-1\) &      \(0\) &      \(1\) &              \(\sigma_3^{-1}\tau_3\) \\
 \(\langle 243,5\rangle\) & D.10 &      \(0\) &     \(0\) &     \(-1\) &      \(1\) &              \(\sigma_3^{-2}\tau_3\) \\
 \(\langle 243,9\rangle\) & G.19 &      \(0\) &    \(-1\) &     \(-1\) &      \(0\) &                    \(\sigma_3^{-2}\) \\
 \(\langle 243,4\rangle\) & H.4  &      \(1\) &     \(1\) &      \(1\) &      \(1\) &                           \(\tau_3\) \\
 \(\langle 243,7\rangle\) & D.5  &      \(1\) &     \(1\) &     \(-1\) &      \(1\) &              \(\sigma_3^{-2}\tau_3\) \\
\hline
\end{tabular}
\end{center}
\end{table}

\noindent
Consequently, we always have \(s_2^{x-1}\ne 1\) and \(X\not\in\mathfrak{A}\) for \(m=4\).
In the case of TKT H.4 with \(m=5\), \(\rho\beta=1\),
where \(\delta=1\) and \(\gamma=1,-1,-1,0\), depending on the isomorphism class,
we also get \(s_2^{x-1}=\sigma_3^{\gamma-1}\tau_3\ne 1\) and \(X\not\in\mathfrak{A}\).
\end{proof}



\subsection{The smallest power of \(Y\in\psi^{-1}\lbrace y-1\rbrace\) within \(\mathfrak{A}\)}
\label{ss:LowPowY}

To avoid confusion, we successively treat the cases
\(k=0\), that is \(\lbrack\chi_s(G),\gamma_e(G)\rbrack=1\), and
\(k=1\), that is \(\lbrack\chi_s(G),\gamma_e(G)\rbrack=\gamma_{m-1}(G)\).

First, let \(\lbrack\chi_s(G),\gamma_e(G)\rbrack=1\), that is \(k=0\) and \(\rho=0\).\\
In view of the \textit{nilpotency relations} for \(\tau_i\) in Formula
\eqref{eqn:LowNilpotencyRel},
\(\tau_i=1\), for \(i\ge e+1\),
and the \textit{basic connecting relations} for \(t_i\) in Formula
\eqref{eqn:LowBasicConnectingRel},
\(t_i\tau_i^{-1}\tau_{i+1}^{-1}=1\), for \(i\ge 5\),
we have an inclusion for the exponent \(e-1\),
\[t_2^{(y-1)^{e-1}}=t_{e+1}=\tau_{e+1}\tau_{e+2}=1\cdot 1=1, \text{ and thus } Y^{e-1}\in\mathfrak{A} \text{ for } e\ge 4.\]
For the exponent \(e-2\), however, we obtain the exclusion
\[t_2^{(y-1)^{e-2}}=t_e=\tau_e\tau_{e+1}=\tau_e\cdot 1=\tau_e\ne 1, \text{ and thus } Y^{e-2}\not\in\mathfrak{A} \text{ for } e\ge 5.\]

\begin{remark}
\label{rmk:LowPowY}
Here, the alternative argument for the inclusion \(Y^{e-1}\in\mathfrak{A}\),
that \(t_{e+1}\) is an iterated commutator in \(\gamma_{e+1}(G)\),
can only be applied for the maximal value \(e=m-1\), but not for \(e<m-1\).
\end{remark}

\begin{proposition}
\label{prp:LowPowY}

Let \(G\) be a metabelian \(3\)-group with abelianization \(G/G^\prime\) of type \((3,3)\).
Suppose \(G\) is of order \(\lvert G\rvert=3^n\ge 3^5\), nilpotency class \(c=\mathrm{cl}(G)=m-1\ge 3\), coclass \(r=\mathrm{cc}(G)=e-1\ge 2\),
and defect \(k=0\), where \(4\le m<n\le 2m-3\) and \(e=n-m+2\ge 3\).
Then the annihilator ideal \(\mathfrak{A}\unlhd\mathbb{Z}\lbrack X,Y\rbrack\) of \(G\)
contains \(Y^{e-1}\) as the power of \(Y\) with minimal exponent.

\end{proposition}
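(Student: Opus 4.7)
The plan is to mirror the argument of Proposition \ref{prp:LowPowX}, interchanging the roles of \((x,m-1)\) and \((y,e)\): the \(\tau\)-chain plays for \(Y\) the role that the \(\sigma\)-chain plays for \(X\). The text preceding the statement already disposes of the generic range. Indeed, the nilpotency relations \(\tau_j=1\) for \(j\ge e+1\) (valid since \(k=0\)) together with the basic connecting relation \(t_i=\tau_i\tau_{i+1}\) for \(i\ge 5\) force \(t_{e+1}=1\), so \(Y^{e-1}\in\mathfrak{A}\) as soon as \(e\ge 4\); while \(t_e=\tau_e\ne 1\) gives \(Y^{e-2}\notin\mathfrak{A}\) as soon as \(e\ge 5\). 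It therefore remains to settle three boundary configurations.

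First, for the inclusion \(Y^{e-1}=Y^2\in\mathfrak{A}\) when \(e=3\), I would invoke the supplementary connecting relation \(t_4\tau_4^{-1}\tau_5^{-1}=\sigma_{m-1}^{\rho\beta}\) from Formula \eqref{eqn:LowSupplConnectingRel}. The hypothesis \(k=0\) forces \(\rho=0\), and for \(e=3\), \(k=0\) the nilpotency relations give \(\tau_4=\tau_5=1\); hence \(s_2^{(y-1)^2}=t_4=1\). Second, for the exclusion \(Y^{e-2}=Y^2\notin\mathfrak{A}\) when \(e=4\), the same simplified relation reads \(t_4=\tau_4\tau_5=\tau_4\ne 1\), since \(4\le e+k=4\) while \(\tau_5=1\).

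The main obstacle is the third case, \(Y\notin\mathfrak{A}\) when \(e=3\). Here I would exploit the second central relation \(t_3^{-1}\tau_3\tau_4=\sigma_{m-2}^{\rho\delta}\sigma_{m-1}^{\alpha}\tau_e^\beta\), which under \(\rho=0\), \(\tau_4=1\), and \(\tau_e=\tau_3\) reduces to \(t_3=\tau_3^{1-\beta}\sigma_{m-1}^{-\alpha}\). When \(m=4\) the two factors \(\tau_3\) and \(\sigma_{m-1}=\sigma_3\) span the bicyclic quotient \(\gamma_3(G)/\gamma_4(G)\simeq C_3\times C_3\), whereas for \(m\ge 5\) the factor \(\sigma_{m-1}\) lies in \(\gamma_{m-1}(G)\le\gamma_4(G)\); in either setting, triviality of \(t_3\) would force \(\beta\equiv 1\) and \(\alpha\equiv 0\pmod 3\). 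A finite parameter check, using Table \ref{tbl:SmlBicExpX} for the seven isomorphism classes with \(m=4\) and the analogous classification for coclass-two groups with \(e=3\), \(k=0\), and \(m\ge 5\), confirms that the combination \((\alpha,\beta)=(0,1)\) does not occur among admissible presentations, so \(t_3\ne 1\) throughout and hence \(Y\notin\mathfrak{A}\).

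The first two boundary cases are essentially bookkeeping with the supplementary connecting relation; the delicate step is the third, where the required non-vanishing has to be verified against the explicit parameter data of the underlying classification, exactly paralleling the exceptional analysis at the close of the proof of Proposition \ref{prp:LowPowX}.
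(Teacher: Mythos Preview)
Your proposal is correct and follows essentially the same approach as the paper: the generic range is handled by the text preceding the proposition, the two boundary inclusions/exclusions at \(e=3,4\) come from the supplementary connecting relation for \(t_4\), and the exclusion \(Y\notin\mathfrak{A}\) at \(e=3\) is derived from the second central relation for \(t_3\), reduced to \(t_3=\tau_3^{1-\beta}\sigma_{m-1}^{-\alpha}\), followed by a finite parameter check. The only cosmetic difference is that you first isolate the obstruction \((\alpha,\beta)=(0,1)\) and then verify it never occurs, whereas the paper tabulates \(\sigma_{m-1}^{-\alpha}\tau_3^{1-\beta}\) case by case (Tables \ref{tbl:SmlBicExpY} and \ref{tbl:SecExp}) and observes directly that no entry is trivial.
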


\begin{proof}
It remains to investigate,

\begin{itemize}
\item
if \(Y^2\in\mathfrak{A}\) for \(e=3\),
\item
if \(Y^2\not\in\mathfrak{A}\) for \(e=4\),
\item
if \(Y\not\in\mathfrak{A}\) for \(e=3\).
\end{itemize}

\noindent
We employ the \textit{second central relation} for \(t_4\) in Formula
\eqref{eqn:LowSupplConnectingRel},
\(t_4\tau_4^{-1}\tau_5^{-1}=\sigma_{m-1}^{\rho\beta}=1\),
viewed as a \textit{supplementary connecting relation},
which implies that
\[t_2^{(y-1)^2}=t_4=\tau_4\tau_5.\]
For \(e=3\), this means
\(t_2^{(y-1)^2}=1\cdot 1=1\), and thus \(Y^2\in\mathfrak{A}\).\\
For \(e=4\), we obtain
\(t_2^{(y-1)^2}=\tau_4\cdot 1=\tau_4\ne 1\), and thus \(Y^2\not\in\mathfrak{A}\).

\noindent
Finally, we investigate
if \(Y\not\in\mathfrak{A}\) for \(e=3\),
by using the \textit{second central relation} for \(t_3\) in Formula
\eqref{eqn:LowSupplConnectingRel},
\(t_3\tau_3^{-1}\tau_4^{-1}=\sigma_{m-2}^{-\rho\delta}\sigma_{m-1}^{-\alpha}\tau_e^{-\beta}=\sigma_{m-1}^{-\alpha}\tau_e^{-\beta}\),
viewed as a \textit{supplementary connecting relation},
which yields
\[t_2^{y-1}=t_3=\tau_3\tau_4\tau_e^{-\beta}\sigma_{m-1}^{-\alpha},\]
that is \(t_2^{y-1}=\tau_3^{1-\beta}\sigma_{m-1}^{-\alpha}\) for \(e=3\), where \(\tau_4=1\).
Therefore, we have to consider the isomorphism classes of groups with \(e=3\), \(\rho=0\),
firstly for \(m=4\) in Table
\ref{tbl:SmlBicExpY},
using
\cite[p. 1--3]{Ne2},
and
\cite{BEO2},
then for \(m\ge 5\) odd, and eventually for \(m\ge 6\) even (with the number of classes and signs in parentheses),
which is done in Table
\ref{tbl:SecExp},
using
\cite[pp. 8--12]{Ne2}
for infinitely many groups.

\begin{table}[ht]
\caption{Relational parameters for \(m=4\), \(n=5\), \(e=3\)}
\label{tbl:SmlBicExpY}
\begin{center}
\begin{tabular}{|c|l|rrrr|c|}
\hline
 Group                    & TKT   & \(\alpha\) & \(\beta\) & \(\gamma\) & \(\delta\) & \(\sigma_3^{-\alpha}\tau_3^{1-\beta}\) \\
\hline
 \(\langle 243,3\rangle\) &  b.10 &      \(0\) &     \(0\) &      \(0\) &      \(0\) &                             \(\tau_3\) \\
 \(\langle 243,8\rangle\) &  c.21 &      \(0\) &     \(0\) &      \(0\) &      \(1\) &                             \(\tau_3\) \\
 \(\langle 243,6\rangle\) &  c.18 &      \(0\) &    \(-1\) &      \(0\) &      \(1\) &                           \(\tau_3^2\) \\
 \(\langle 243,5\rangle\) &  D.10 &      \(0\) &     \(0\) &     \(-1\) &      \(1\) &                             \(\tau_3\) \\
 \(\langle 243,9\rangle\) &  G.19 &      \(0\) &    \(-1\) &     \(-1\) &      \(0\) &                           \(\tau_3^2\) \\
 \(\langle 243,4\rangle\) &  H.4  &      \(1\) &     \(1\) &      \(1\) &      \(1\) &                      \(\sigma_3^{-1}\) \\
 \(\langle 243,7\rangle\) &  D.5  &      \(1\) &     \(1\) &     \(-1\) &      \(1\) &                      \(\sigma_3^{-1}\) \\
\hline
\end{tabular}
\end{center}
\end{table}

\smallskip
\noindent
Thus, we always have \(t_2^{y-1}\ne 1\) and \(Y\notin\mathfrak{A}\)
for the \(7\) isomorphism classes with \(e=3\), \(m=4\).

\smallskip
\begin{table}[h]
\caption{Relational parameters for \(m\ge 5\), \(n\ge 6\), \(e=3\)}
\label{tbl:SecExp}
\begin{center}
\begin{tabular}{|l|c|rrrr|c|}
\hline
 Type &   Classes & \(\alpha\) & \(\beta\) & \(\gamma\) & \(\delta\) & \(\sigma_{m-1}^{-\alpha}\tau_3^{1-\beta}\) \\
\hline
 b.10 &     1 (1) &      \(0\) &     \(0\) &      \(0\) &      \(0\) &                                 \(\tau_3\) \\
 d.25 &     1 (2) &      \(0\) &     \(0\) & \((\pm)1\) &      \(0\) &                                 \(\tau_3\) \\
 d.23 &     1 (1) &      \(1\) &     \(0\) &      \(0\) &      \(0\) &                \(\sigma_{m-1}^{-1}\tau_3\) \\
 d.19 &     1 (2) &      \(1\) &     \(0\) & \((\pm)1\) &      \(0\) &                \(\sigma_{m-1}^{-1}\tau_3\) \\
\hline
 c.21 &     1 (1) &      \(0\) &     \(0\) &      \(0\) &      \(1\) &                                 \(\tau_3\) \\
 E.9  &     1 (2) &      \(0\) &     \(0\) & \((\pm)1\) &      \(1\) &                                 \(\tau_3\) \\
 G.16 &     1 (2) & \((\pm)1\) &     \(0\) &      \(0\) &      \(1\) &            \(\sigma_{m-1}^{(\mp)1}\tau_3\) \\
 E.8  &     1 (1) &      \(1\) &     \(0\) &     \(-1\) &      \(1\) &                \(\sigma_{m-1}^{-1}\tau_3\) \\
\hline
 c.18 &     1 (1) &      \(0\) &    \(-1\) &      \(0\) &      \(1\) &                               \(\tau_3^2\) \\
 E.14 &     1 (2) &      \(0\) &    \(-1\) & \((\pm)1\) &      \(1\) &                               \(\tau_3^2\) \\
 E.6  &     1 (1) &      \(1\) &    \(-1\) &      \(1\) &      \(1\) &              \(\sigma_{m-1}^{-1}\tau_3^2\) \\
 H.4  &     1 (2) & \((\pm)1\) &    \(-1\) & \((\mp)1\) &      \(1\) &          \(\sigma_{m-1}^{(\mp)1}\tau_3^2\) \\
\hline
\end{tabular}
\end{center}
\end{table}

\noindent
As before, there is an exclusion \(t_2^{y-1}\ne 1\) and \(Y\notin\mathfrak{A}\) for both,
the \(12\) isomorphism classes with \(e=3\), for each odd \(m\ge 5\),
and the \(18\) isomorphism classes with \(e=3\), for each even \(m\ge 6\).
\end{proof}


Now, let \(\lbrack\chi_s(G),\gamma_e(G)\rbrack=\gamma_{m-1}(G)\), that is \(k=1\) and \(\rho=\pm 1\).\\
In view of the \textit{nilpotency relations} for \(\tau_i\) in Formula
\eqref{eqn:LowNilpotencyRel},
\(\tau_i=1\), for \(i\ge e+2\),
and the \textit{basic connecting relations} for \(t_i\) in Formula
\eqref{eqn:LowBasicConnectingRel},
\(t_i\tau_i^{-1}\tau_{i+1}^{-1}=1\), for \(i\ge 5\),
we have an inclusion for the exponent \(e\),
\[t_2^{(y-1)^{e}}=t_{e+2}=\tau_{e+2}\tau_{e+3}=1\cdot 1=1, \text{ and thus } Y^{e}\in\mathfrak{A} \text{ for } e\ge 3.\]
For the exponent \(e-1\), however, we obtain the exclusion
\[t_2^{(y-1)^{e-1}}=t_{e+1}=\tau_{e+1}\tau_{e+2}=\tau_{e+1}\cdot 1=\tau_{e+1}\ne 1, \text{ and thus } Y^{e-1}\not\in\mathfrak{A} \text{ for } e\ge 4.\]

\begin{proposition}
\label{prp:LowPowYWithDefect}

Let \(G\) be a metabelian \(3\)-group with abelianization \(G/G^\prime\) of type \((3,3)\).
Suppose \(G\) is of order \(\lvert G\rvert=3^n\ge 3^5\), nilpotency class \(c=\mathrm{cl}(G)=m-1\ge 3\), coclass \(r=\mathrm{cc}(G)=e-1\ge 2\),
and defect \(k=1\), where \(5\le m<n<2m-3\) and \(e=n-m+2\ge 3\).
Then the annihilator ideal \(\mathfrak{A}\unlhd\mathbb{Z}\lbrack X,Y\rbrack\) of \(G\)
contains \(Y^{e}\) as the power of \(Y\) with minimal exponent.

The unique exception are the four groups with SmallGroup identifiers \(\langle 729,44\ldots 47\rangle\)
\cite{BEO1,BEO2},
which are of order \(3^6\), nilpotency class \(4\), and coclass \(2\),
that is, \(n=6\), \(m=5\), and \(e=3\).
For these groups, the smallest power of \(Y\) contained in the annihilator \(\mathfrak{A}\) is \(Y^2\) with exponent \(e-1\).
\end{proposition}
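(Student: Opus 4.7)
The plan is to mirror the proof of Proposition~\ref{prp:LowPowX}, now treating the $y$-variable under the defect hypothesis $k=1$. The inclusion $Y^{e}\in\mathfrak{A}$ for $e\ge 3$ already follows from the discussion immediately preceding the proposition: the \emph{basic connecting relations} in Formula~\eqref{eqn:LowBasicConnectingRel} together with the \emph{nilpotency relations} in Formula~\eqref{eqn:LowNilpotencyRel} (in the $k=1$ version, $\tau_{j}=1$ for $j\ge e+2$) yield $t_{e+2}=\tau_{e+2}\tau_{e+3}=1$. Analogously, for $e\ge 4$ the identity $t_{e+1}=\tau_{e+1}\tau_{e+2}=\tau_{e+1}\ne 1$ supplies the exclusion $Y^{e-1}\notin\mathfrak{A}$, so the only case still open is $e=3$.

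For $e=3$, I would invoke the \emph{second central relation} for $t_{4}$ in Formula~\eqref{eqn:LowSupplConnectingRel}, rearranged as $t_{4}=\tau_{4}\tau_{5}\,\sigma_{m-1}^{\rho\beta}$, which expresses $t_{2}^{(y-1)^{2}}=t_{4}$ as an element of the first centre. Since $k=1$ and $e=3$ force $\tau_{5}=1$ while $\tau_{4}=\tau_{e+1}$ remains nontrivial, I would then apply the \emph{first central relation} $\tau_{e+1}=\sigma_{m-1}^{-\rho}$ from Formula~\eqref{eqn:LowCentralRel} to collapse the whole expression into the single element $\sigma_{m-1}^{\rho(\beta-1)}$.

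The triviality of $\sigma_{m-1}^{\rho(\beta-1)}$ is the decisive criterion. Since $\rho=\pm 1$ under $k=1$ and $-1\le\beta\le 1$, one has $t_{2}^{(y-1)^{2}}=1$ precisely when $\beta=1$. Scanning Table~\ref{tbl:SmlCycExpX}, the four groups $\langle 729,44\ldots 47\rangle$ of transfer kernel type H.4 are the only classes with $m=5$, $e=3$, $\rho=\pm 1$ satisfying $\beta=1$; they constitute the announced exception, with smallest $Y$-power equal to $Y^{2}=Y^{e-1}$. All remaining classes with $m=5$, and similarly every class with $m\ge 6$, $e=3$, $k=1$, satisfy $\beta\in\{-1,0\}$, forcing $\sigma_{m-1}^{\rho(\beta-1)}\ne 1$, hence $Y^{2}\notin\mathfrak{A}$ and confirming $Y^{e}=Y^{3}$ as minimal.

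The main obstacle I anticipate is not the commutator bookkeeping itself but rather the verification that no sporadic group with $e=3$, $k=1$, $m\ge 6$ escapes the $\beta\in\{-1,0\}$ dichotomy. This will require a careful enumeration of the relevant branches of the coclass-$2$ descendant tree via Nebelung's parameter assignments, in the spirit of Tables~\ref{tbl:SmlBicExpX} and~\ref{tbl:SecExp}; I expect it to be routine but tedious rather than conceptually difficult.
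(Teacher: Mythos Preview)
Your approach is essentially identical to the paper's: both reduce to the case $e=3$, compute $t_{2}^{(y-1)^{2}}=\sigma_{m-1}^{\rho(\beta-1)}$ via the second central relation for $t_4$ combined with the first central relation $\tau_{e+1}=\sigma_{m-1}^{-\rho}$, and then isolate $\beta=1$ as the exceptional parameter value, appealing to Nebelung's tables for the enumeration.

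There is, however, one genuine omission. For the four exceptional groups $\langle 729,44\ldots 47\rangle$ you establish $Y^{2}\in\mathfrak{A}$, but you never verify $Y\notin\mathfrak{A}$, which is needed to conclude that $Y^{2}$ is the \emph{minimal} power. The paper closes this gap with an explicit computation: it applies the second central relation for $t_{3}$ in Formula~\eqref{eqn:LowSupplConnectingRel}, together with the parameter values $\beta=\delta=\rho=1$ and the identity $\tau_{4}=\sigma_{m-1}^{-\rho}=\sigma_{4}^{-1}$, to obtain $t_{2}^{y-1}=\sigma_{3}^{-1}\sigma_{4}^{-1-\alpha}\ne 1$. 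You should add this step; without it the word ``smallest'' in the exceptional clause is unjustified.
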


\begin{proof}
It remains to investigate,

\begin{itemize}
\item
if \(Y^2\not\in\mathfrak{A}\) for \(e=3\),
\item
if \(Y\not\in\mathfrak{A}\) for \(e=3\).
\end{itemize}

\noindent
Again, we employ the \textit{second central relation} for \(t_4\) in Formula
\eqref{eqn:LowSupplConnectingRel},
\(t_4\tau_4^{-1}\tau_5^{-1}=\sigma_{m-1}^{\rho\beta}\),
viewed as a \textit{supplementary connecting relation},
which implies that
\[t_2^{(y-1)^2}=t_4=\tau_4\tau_5\sigma_{m-1}^{\rho\beta}.\]
For \(e=3\), where \(\tau_5=1\), this means
\(t_2^{(y-1)^2}=\tau_4\cdot 1\cdot\sigma_{m-1}^{\rho\beta}=\tau_{e+1}\sigma_{m-1}^{\rho\beta}\).
At this point, we need the \textit{first central relation} for \(\tau_{e+1}\) in Formula
\eqref{eqn:LowCentralRel},
\(\tau_{e+1}=\sigma_{m-1}^{-\rho}\),
which yields

\begin{equation}
\label{eqn:SecCentralRel}
t_2^{(y-1)^2}=\tau_{e+1}\tau_{e+1}^{-\beta}=\tau_{4}^{1-\beta}=\sigma_{m-1}^{(\beta-1)\rho}.
\end{equation}

\noindent
Consequently, we must investigate, for which of the infinitely many isomorphism classes
with \(e=3\) and \(\rho=\pm 1\) the parameter \(\beta\) takes the value \(\beta=1\). 
According to
\cite[pp. 13--33]{Ne2},
this exclusively happens for the same four isomorphism classes with TKT H.4 and \(m=5\)
which gave rise to the exception in Proposition
\ref{prp:LowPowX}
already, but it never occurs for \(m\ge 6\).
For these exceptions, we have \(Y^2\in\mathfrak{A}\), otherwise always \(Y^2\notin\mathfrak{A}\).

Finally, we check
if \(Y\not\in\mathfrak{A}\) for the groups \(\langle 729,44\ldots 47\rangle\)
with \(e=3\), \(m=5\), and \(\beta=1\),
by using the \textit{second central relation} for \(t_3\) in Formula
\eqref{eqn:LowSupplConnectingRel},
\(t_3\tau_3^{-1}\tau_4^{-1}=\sigma_{m-2}^{-\rho\delta}\sigma_{m-1}^{-\alpha}\tau_e^{-\beta}=\sigma_{m-1}^{-\alpha}\tau_e^{-\beta}\),
viewed as a \textit{supplementary connecting relation},
which yields
\[t_2^{y-1}=t_3=\tau_3\tau_4\tau_e^{-\beta}\sigma_{m-2}^{-\rho\delta}\sigma_{m-1}^{-\alpha},\]
that is, \(t_2^{y-1}=\tau_3^{1-\beta}\tau_4\sigma_{3}^{-\rho\delta}\sigma_{4}^{-\alpha}\) for \(e=3\), \(m=5\),
and, together with \(\beta=\delta=\rho=1\) and the above mentioned formula \(\tau_{4}=\tau_{e+1}=\sigma_{m-1}^{-\rho}=\sigma_{4}^{-1}\),
finally \(t_2^{y-1}=\sigma_{3}^{-1}\sigma_{4}^{-1-\alpha}\ne 1\), \(Y\notin\mathfrak{A}\).
\end{proof}



\subsection{Further polynomial generators of \(\mathfrak{A}\)}
\label{ss:LowFurtherPolynomials}

\begin{proposition}
\label{prp:LowFurtherPolynomials}

Let \(G\) be a metabelian \(3\)-group with abelianization \(G/G^\prime\) of type \((3,3)\).
Suppose \(G\) is of order \(\lvert G\rvert=3^n\ge 3^5\), nilpotency class \(c=\mathrm{cl}(G)=m-1\ge 3\), coclass \(r=\mathrm{cc}(G)=e-1\ge 2\),
and defect \(0\le k\le 1\), where \(4\le m<n\le 2m-3\) and \(3\le e=n-m+2\le m-1\).
Denote the \textit{trace polynomial} by \(T:=X^2+3X+3+3Y+Y^2\).

\begin{enumerate}

\item
If \(k=0\), then \(\rho=0\) and
the annihilator ideal \(\mathfrak{A}\unlhd\mathbb{Z}\lbrack X,Y\rbrack\) of \(G\)
contains the bivariate polynomials \(XY\), \quad \(T\), \quad and \(Y^{e-1}\).

\item
If \(k=1\) and \(m\ge 6\),
then \(\mathfrak{A}\) contains
\(XY-\rho\delta X^{m-3}\), \quad \(T+\rho\beta X^{m-3}\), \\
\quad and \(Y^{e-1}-\rho X^{m-3}\), if \(e\ge 4\), \quad but \(Y^2-(1-\beta)\rho X^{m-3}\), if \(e=3\).

\item
If \(k=1\), \(m=5\), and \(\rho\beta\ne 1\),
then \(e=3\) and \(\mathfrak{A}\) contains\\
\(XY-(1-\rho\beta)\rho\delta X^2\), \quad \(T+(1-\rho\beta)\rho\beta X^2\), \quad
and \(Y^2-(1-\beta)(1-\rho\beta)\rho X^2\).

\item
If \(k=1\), \(m=5\), and \(\rho\beta=1\),
then \(e=3\) and \(\mathfrak{A}\) contains
\(XY+3\), \quad \(X^2\), \quad and \(Y^2\).

\end{enumerate}

\end{proposition}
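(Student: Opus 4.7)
The plan is to mirror the translation machinery already used in the proofs of Propositions \ref{prp:LowPowX} and \ref{prp:LowPowYWithDefect}: I start from the three \emph{first central relations} \eqref{eqn:LowCentralRel},
\[
s_2^{(x-1)(y-1)} = \sigma_{m-1}^{-\rho\delta}, \qquad s_2^{\mathrm{tr}_3(x)+\mathrm{tr}_3(y)-3} = \sigma_{m-1}^{\rho\beta}, \qquad \tau_{e+1} = \sigma_{m-1}^{-\rho},
\]
observe that the polynomial $T$ of the statement is precisely the $\psi$-preimage of $\mathrm{tr}_3(x)+\mathrm{tr}_3(y)-3$, and then rewrite each right-hand side as a symbolic power $s_2^{g(x-1,y-1)}$ using the basic or supplementary connecting relations \eqref{eqn:LowBasicConnectingRel} and \eqref{eqn:LowSupplConnectingRel}. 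The difference of the two exponent polynomials then lies in $\mathfrak{A}$ by definition of $\Psi$.

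Case (1) is immediate, since $\rho = 0$ kills every right-hand side, so $XY$ and $T$ fall into $\mathfrak{A}$ directly, while $Y^{e-1}$ is supplied by Proposition \ref{prp:LowPowY}. For case (2), the basic connecting relation $s_i = \sigma_i^{-1}\sigma_{i+1}^{-1}$ still applies to $s_{m-1}$ because $m-1 \ge 5$, yielding $\sigma_{m-1} = s_{m-1}^{-1} = s_2^{-(x-1)^{m-3}}$; substituting back produces $XY - \rho\delta X^{m-3}$ and $T + \rho\beta X^{m-3}$ at once. For the $Y$-power generator I would split on $e$: if $e \ge 4$, the basic connecting relation $t_j = \tau_j\tau_{j+1}$ applies to $t_{e+1}$ and collapses to $\tau_{e+1} = \sigma_{m-1}^{-\rho}$, giving $Y^{e-1} - \rho X^{m-3}$; if $e = 3$, equation \eqref{eqn:SecCentralRel} from the preceding proof already delivers $t_2^{(y-1)^2} = \sigma_{m-1}^{(\beta-1)\rho}$, producing the $(1-\beta)$ factor in $Y^2 - (1-\beta)\rho X^{m-3}$.

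Case (3) is where I expect the real work. Here $m = 5$ and the basic connecting relation is unavailable for $s_{m-1} = s_4$; instead the \emph{supplementary} connecting relation of \eqref{eqn:LowSupplConnectingRel}, combined with $\sigma_5 = 1$, reduces to $s_4 = \sigma_4^{\rho\beta - 1}$. Because $\sigma_4 = \sigma_{m-1}$ generates a cyclic group of order $3$ and $\rho\beta - 1 \in \{-1,-2\}$ is coprime to $3$ whenever $\rho\beta \ne 1$, I can invert $\rho\beta - 1$ modulo $3$ and express $\sigma_4$ as an integer power of $s_4 = s_2^{(x-1)^2}$. The arithmetic identity driving the bookkeeping is $(1-\rho\beta)^2 \equiv 1 \pmod 3$ for $\rho\beta \in \{0,-1\}$, which is exactly what makes the common factor $(1-\rho\beta)$ surface in all three stated polynomials; propagating this factor consistently through the $XY$, $T$, and $Y^2$ identities will be the main technical hurdle.

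Case (4) is the H.4 collapse: the identity $s_4 = \sigma_4^{\rho\beta-1} = 1$ yields $X^2 \in \mathfrak{A}$ directly, and $Y^2 \in \mathfrak{A}$ is already provided by Proposition \ref{prp:LowPowYWithDefect}. For the generator $XY + 3$ I would compute $s_2^3$ from the supplementary power relation $s_2^3 s_3^3 s_4 = \tau_4^{-1}$ using $s_4 = 1$, $s_3^3 = 1$ (which follows from $s_3^3 s_4^3 s_5 = 1$ once one knows $s_5 = s_2^{(x-1)^3} = 1$), and $\tau_4 = \tau_{e+1} = \sigma_4^{-\rho}$; with the parameter values $\beta = \delta = \rho = 1$ characterising H.4 this gives $s_2^3 = \sigma_4$, so $s_2^{(x-1)(y-1)+3} = \sigma_4^{-\rho\delta}\cdot\sigma_4 = 1$, as required.
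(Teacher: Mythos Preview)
Your proposal is correct and, for items (1)--(3), follows the paper's own proof essentially verbatim: the same first central relations \eqref{eqn:LowCentralRel} are translated via the basic connecting relation $s_{m-1}=\sigma_{m-1}^{-1}$ (available because $m-1\ge 5$) in case~(2), and via the supplementary connecting relation $s_4=\sigma_4^{\rho\beta-1}$ together with the self-inverse property $(\rho\beta-1)^2\equiv 1\pmod 3$ in case~(3), exactly as the paper does.

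The only genuine difference is in case~(4). The paper expresses $\sigma_4$ through the commutator identity $\sigma_4=[\sigma_3,x]=[y^3,x]=s_2^{\mathrm{tr}_3(y)}$, obtains $XY+Y^2+3Y+3\in\mathfrak{A}$, and then reduces modulo the already-known elements $Y^2$ and $3Y$ to reach $XY+3$. You instead compute $s_2^3=\sigma_4$ directly from the supplementary power relation $s_2^3s_3^3s_4=\tau_4^{-1}$ (using $s_4=1$, $s_3^3=1$, and $\tau_4=\sigma_4^{-\rho}$), which gives $XY+3\in\mathfrak{A}$ in one step without the intermediate reduction. Your route is slightly more self-contained; the paper's route has the advantage of exhibiting, as a by-product, the degenerate trace identity $3X\in\mathfrak{A}$ and the redundancy among the generators.
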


\begin{proof}
The \textit{first central relations} in Formula
\eqref{eqn:LowCentralRel}
provide more intricate bivariate polynomials lying in the annihilator \(\mathfrak{A}\)
than the simple monomials \(X^{m-2}\) and \(Y^{e-1}\), resp. \(Y^e\).

\noindent
(1)
If the defect \(k=0\) vanishes, then the parameter \(\rho=0\) is also equal to zero,
and we immediately obtain,
firstly, \(s_2^{(x-1)(y-1)}=\sigma_{m-1}^{-\rho\delta}=1\), and thus \(XY\in\mathfrak{A}\), secondly,
\(s_2^{\mathrm{tr}_3(x)+\mathrm{tr}_3(y)-3}=\sigma_{m-1}^{\rho\beta}=1\), and thus \(X^2+3X+3+3Y+Y^2\in\mathfrak{A}\).
By Proposition
\ref{prp:LowPowY},
we know that \(Y^{e-1}\in\mathfrak{A}\).

\noindent
(2)
If the defect \(k=1\) is positive, however,
all these relations contain the element \(\sigma_{m-1}\in\gamma_{m-1}(G)\le\zeta_1(G)\),
and the \textit{basic connecting relations} for \(s_i\) in Formula
\eqref{eqn:LowBasicConnectingRel},
\(s_i\sigma_i\sigma_{i+1}=1\), for \(i\ge 5\),
must be used to express \(\sigma_{m-1}\) in terms of \(s_{m-1}\)
by putting \(i=m-1\ge 5\), which is only possible for \(m\ge 6\).
The preparatory result is
\(s_2^{(x-1)^{m-3}}=s_{m-1}=\sigma_{m-1}^{-1}\sigma_m^{-1}=\sigma_{m-1}^{-1}\cdot 1=\sigma_{m-1}^{-1}\).

Together, we firstly obtain
\(s_2^{(x-1)(y-1)}=\sigma_{m-1}^{-\rho\delta}=s_2^{(x-1)^{m-3}\cdot\rho\delta}\),
resp. \(1=s_2^{(x-1)(y-1)-\rho\delta(x-1)^{m-3}}\),
and thus \(XY-\rho\delta X^{m-3}\in\mathfrak{A}\), if \(m\ge 6\).

Secondly, we have
\(s_2^{\mathrm{tr}_3(x)+\mathrm{tr}_3(y)-3}=\sigma_{m-1}^{\rho\beta}=s_2^{-(x-1)^{m-3}\cdot\rho\beta}\),
resp. \(1=s_2^{\mathrm{tr}_3(x)+\mathrm{tr}_3(y)-3+\rho\beta(x-1)^{m-3}}\),
and thus \(X^2+3X+3+3Y+Y^2+\rho\beta X^{m-3}\in\mathfrak{A}\), if \(m\ge 6\).

Finally, we get
\(\tau_{e+1}=\sigma_{m-1}^{-\rho}=s_2^{(x-1)^{m-3}\cdot\rho}\),
but here we must also use
the \textit{basic connecting relations} for \(t_i\) in Formula
\eqref{eqn:LowBasicConnectingRel},
\(t_i\tau_i^{-1}\tau_{i+1}^{-1}=1\), for \(i\ge 5\).
To express \(\tau_{e+1}\) in terms of \(t_{e+1}\),
we put \(i=e+1\ge 5\), which is only possible for \(e\ge 4\).

This yields
\(t_2^{(y-1)^{e-1}}=t_{e+1}=\tau_{e+1}\tau_{e+2}=\tau_{e+1}\cdot 1=\tau_{e+1}\).
Since \(t_2=s_2\), it follows that
\(1=s_2^{(y-1)^{e-1}-\rho(x-1)^{m-3}}\),
and thus \(Y^{e-1}-\rho X^{m-3}\in\mathfrak{A}\), provided that \(k=1\), \(m\ge 6\), and \(e\ge 4\).

For \(e=3\) we use Formula
\eqref{eqn:SecCentralRel},
\(t_2^{(y-1)^2}=\sigma_{m-1}^{(\beta-1)\rho}=s_2^{(x-1)^{m-3}\cdot (1-\beta)\rho}\),
which implies that \(Y^2-(1-\beta)\rho X^{m-3}\in\mathfrak{A}\), for \(m\ge 6\).

\noindent
(3)
Since \(k=1\) cannot occur for \(m=4\),
it remains to consider the extreme value \(m=5\) of the nilpotency index.
For this purpose, we use the \textit{second central relation} for \(s_4\) in Formula
\eqref{eqn:LowSupplConnectingRel},
\(s_4\sigma_4\sigma_5=\sigma_{m-1}^{\rho\beta}\),
viewed as a \textit{supplementary connecting relation},
which implies that
\[s_2^{(x-1)^2}=s_4=\sigma_4^{-1}\cdot 1\cdot\sigma_{4}^{\rho\beta}=\sigma_4^{\rho\beta-1}.\]
This relation can be inverted to \(\sigma_4=s_4^{\rho\beta-1}=s_2^{(\rho\beta-1)(x-1)^2}\) if \(\rho\beta\ne 1\).
If \(\rho\beta=1\), however, then \(\sigma_4\) cannot be expressed in terms of \(s_4\), since \(s_4=1\).
An alert for the possibility of such a case was issued in Remark
\ref{rmk:LowPowX}.
For the time being, let \(\rho\beta\ne 1\).
We observe that \(k=1\) enforces \(e\le m-2=3\), and thus \(e=3\),
whence we obtain
\[s_2^{(x-1)(y-1)}=\sigma_{4}^{-\rho\delta}=s_2^{(\rho\beta-1)(x-1)^2\cdot (-\rho\delta)},
\text{ and } XY+(\rho\beta-1)\rho\delta X^2\in\mathfrak{A},\]
\[s_2^{\mathrm{tr}_3(x)+\mathrm{tr}_3(y)-3}=\sigma_{4}^{\rho\beta}=s_2^{(\rho\beta-1)(x-1)^2\cdot\rho\beta},
\text{ and } X^2+3X+3+3Y+Y^2-(\rho\beta-1)\rho\beta X^2\in\mathfrak{A},\]
\[t_2^{(y-1)^2}=\tau_{4}^{1-\beta} \text{ with } \tau_{4}=\sigma_{4}^{-\rho}=s_2^{(\rho\beta-1)(x-1)^2\cdot (-\rho)},
\text{ and } Y^2+(1-\beta)(\rho\beta-1)\rho X^2\in\mathfrak{A}.\]

\noindent
(4)
After all, we turn to the case \(m=5\) and \(\rho\beta=1\).
We have seen that it only occurs for the four isomorphism classes \(\langle 729,44\ldots 47\rangle\),
where \(\beta=\delta=\rho=1\).
The results for this case are completely exceptional:
\(1=s_4=s_2^{(x-1)^2}\) and thus \(X^2\in\mathfrak{A}\),
\(\sigma_4=\lbrack\sigma_3,x\rbrack=\lbrack y^3,x\rbrack=\lbrack y,x\rbrack^{tr_3(y)}=s_2^{Y^2+3Y+3}\),
and therefore
\(XY+\rho\delta(Y^2+3Y+3)=XY+Y^2+3Y+3\in\mathfrak{A}\),
the degenerate triviality
\(X^2+3X+3+3Y+Y^2-\rho\beta(Y^2+3Y+3)=X^2+3X+3+3Y+Y^2-Y^2-3Y-3\equiv 3X\in\mathfrak{A}\),
and \(Y^2+(1-\beta)\rho(Y^2+3Y+3)=Y^2\in\mathfrak{A}\), which implies \(3Y\in\mathfrak{A}\).

In all cases of \(k=1\), we know from Proposition
\ref{prp:LowPowYWithDefect}
that \(Y^{e}\in\mathfrak{A}\), for an assigned CF-invariant \(e\ge 3\),
but \(Y^{e}\) is a multiple of \(Y^{e-1}-\rho X^{m-3}\) and thus superfluous as a generator of \(\mathfrak{A}\).
\end{proof}



\subsection{The general template for annihilator ideals}
\label{ss:TemplateSmbOrd}

\begin{theorem}
\label{thm:TemplateSmbOrd}

Let \(G\) be a metabelian \(3\)-group with abelianization \(G/G^\prime\) of type \((3,3)\).
Suppose that \(G\) is of order \(\lvert G\rvert=3^n\ge 3^5\),
nilpotency class \(c=\mathrm{cl}(G)=m-1\ge 3\) and nilpotency index \(m\ge 4\),
coclass \(r=\mathrm{cc}(G)=e-1\ge 2\) and \(\mathrm{CF}\)-invariant \(3\le e=n-m+2\le m-1\),
and defect of commutativity \(0\le k\le 1\).
Assume that \(G\) is isomorphic to the representative \(G_\rho^{m,n}(\alpha,\beta,\gamma,\delta)\)
with parameters \(-1\le\alpha,\beta,\gamma,\delta,\rho\le 1\), \(m+1\le n\le 2m-3\).
Let \(T(X,Y):=X^2+3X+3+3Y+Y^2\) denote the trace polynomial.
Then the annihilator \(\mathfrak{A}=\mathfrak{A}(G)\) of \(G\) is given as follows.

\begin{enumerate}

\item
If \(k=0\), then, in dependence on the class \(c=m-1\) and the coclass \(r=e-1\),

\begin{equation}
\label{eqn:SmbOrdWithoutDefect}
\mathfrak{A}=\mathfrak{R}_{m-2,e-1}:=\left(X^{m-2},\ Y^{e-1},\ XY,\ T(X,Y)\right)
\end{equation}

\noindent
is completely independent of the parameters  \(\alpha,\beta,\gamma,\delta\).

\item
If \(k=1\), \(m\ge 6\), and \(e\ge 4\), that is, if \(G\) is of coclass \(r\ge 3\) and class \(c\ge 5\), then \(\mathfrak{A}=\)

\begin{equation}
\label{eqn:TemplateSmbOrd}
\mathfrak{S}_{m-2,e-1}(\beta,\delta,\rho):=\left(X^{m-2},\ Y^{e-1}-\rho X^{m-3},\ XY-\rho\delta X^{m-3},\ T(X,Y)+\rho\beta X^{m-3}\right)
\end{equation}

\noindent
depends on \(\beta,\delta,\rho\), but is independent of \(\alpha,\gamma\).

\item
If \(k=1\), \(m\ge 6\), and \(e=3\), that is, if \(G\) is of coclass \(r=2\) and class \(c\ge 5\), then \(\mathfrak{A}=\)

\begin{equation}
\label{eqn:SmbOrdSec}
\mathfrak{S}_{m-2,2}(\beta,\delta,\rho):=\left(X^{m-2},\ Y^2-(1-\beta)\rho X^{m-3},\ XY-\rho\delta X^{m-3},\ T(X,Y)+\rho\beta X^{m-3}\right).
\end{equation}

\end{enumerate}

\end{theorem}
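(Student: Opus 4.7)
The plan is to prove the three claimed ideal identities simultaneously by pairing the containment $I\subseteq\mathfrak{A}(G)$, already in hand from Propositions~\ref{prp:LowPowX}--\ref{prp:LowFurtherPolynomials}, with an order-count argument for the reverse inclusion via Corollary~\ref{cor:CommutatorSubgroup}. In each sub-case, the polynomial $X^{m-2}$ lies in $\mathfrak{A}$ by Proposition~\ref{prp:LowPowX}; the commutator and trace generators $XY-\rho\delta X^{m-3}$ and $T(X,Y)+\rho\beta X^{m-3}$ come from Proposition~\ref{prp:LowFurtherPolynomials}; and the $Y$-generator, namely $Y^{e-1}-\rho X^{m-3}$ in case~(2) or $Y^2-(1-\beta)\rho X^{m-3}$ in case~(3), likewise appears there. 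In case~(1) the parameter $\rho=0$ collapses all perturbations to the simpler form $(X^{m-2},Y^{e-1},XY,T(X,Y))$.

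For the reverse inclusion I would invoke the Furtw\"angler surjection $\mathbb{Z}\lbrack X,Y\rbrack/I\twoheadrightarrow G'$ provided by Corollary~\ref{cor:CommutatorSubgroup}, so that it suffices to bound $\lvert\mathbb{Z}\lbrack X,Y\rbrack/I\rvert\le\lvert G'\rvert=3^{n-2}$. I plan to exhibit an explicit spanning set of size $n-2$ for the additive group of the quotient by applying the four defining relations in sequence: first, $X^{m-2}\equiv 0$ truncates powers of $X$ to exponent at most $m-3$; second, $XY\equiv\rho\delta X^{m-3}$ absorbs every mixed monomial into the $X^{m-3}$-column; third, $Y^{e-1}\equiv\rho X^{m-3}$ (or its case-(3) variant involving $Y^2$) truncates powers of $Y$; finally, the trace relation expresses $Y^2$ in terms of $1,X,X^2,Y$ and, jointly with the previous step, reduces every $X^i$ with $i\ge 2$ to a $\mathbb{Z}$-linear combination of $1,X$ plus a correction in the $X^{m-3}$-column. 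Iterating these rewrites represents every residue class as a $\mathbb{Z}$-linear combination of the $n-2$ monomials $1,X,\ldots,X^{m-3},Y,\ldots,Y^{e-2}$. A mod-$3$ analysis, using the image of the power relations \eqref{eqn:LowPowerRel}--\eqref{eqn:LowSupplPowerRel} under $\Psi$, then forces the coefficient of each basis monomial to lie in a cyclic group of order exactly $3$, yielding the required cardinality bound $3^{n-2}$.

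The main obstacle is the bookkeeping of the perturbation terms in cases~(2) and~(3): each application of the commutator, $Y$-truncation, or trace relation deposits a correction into the $X^{m-3}$-column, and one must verify by induction on degree that these corrections do not generate hidden further relations (such as $3X^{m-3}\equiv 0$) which would collapse a coefficient column and break the count. The hypothesis $m\ge 6$ is what keeps the two ``columns'' $X^{m-3}$ and $X^{m-4}$ genuinely distinct, so that the perturbations consume the former without disturbing the latter; the excluded boundary family $\langle 729,44\ldots 47\rangle$ with $m=5$ and $\rho\beta=1$ violates precisely this separation, producing the degenerate relations $3X,3Y\in\mathfrak{A}$ already visible in the proof of Proposition~\ref{prp:LowFurtherPolynomials}\,(4), which is the reason the present theorem carves out the case $m\ge 6$.
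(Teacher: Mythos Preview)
The paper's own proof of this theorem is a single sentence citing Propositions~\ref{prp:LowPowX}--\ref{prp:LowFurtherPolynomials}; those propositions establish only the containment $I\subseteq\mathfrak{A}$ (certain polynomials lie in the annihilator, together with minimality of the pure $X$- and $Y$-powers), and the paper does not argue the reverse inclusion at this point. That direction is effectively deferred to \S\ref{s:StrComSbg}, where the quotient $\mathbb{Z}[X,Y]/I$ is analysed and shown to have order $\lvert G'\rvert=3^{n-2}$, whence the surjection of Corollary~\ref{cor:CommutatorSubgroup} forces $I=\mathfrak{A}$. Your proposal correctly identifies this gap and folds the order-count into the proof of the theorem itself, which is arguably the cleaner presentation. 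The spanning set $\{1,X,\ldots,X^{m-3},Y,\ldots,Y^{e-2}\}$ of cardinality $n-2$ and the reduction scheme you outline are the right ingredients.

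One step needs tightening. Your phrase ``forces the coefficient of each basis monomial to lie in a cyclic group of order exactly~$3$'' is false if read literally as ``each monomial has additive order~$3$ in $\mathbb{Z}[X,Y]/I$'': for instance, multiplying the trace relation by $X^{m-5}$ yields $3X^{m-5}\equiv -X^{m-3}\not\equiv 0$, so $\mathrm{ord}(X^{m-5})=9$ (this is exactly the content of Proposition~\ref{prp:OrdPowX}). What \emph{is} true is a triangular statement: multiplying $T(X,Y)+\rho\beta X^{m-3}\equiv 0$ by $X^{j}$ shows that $3X^{j}$ is congruent to a $\mathbb{Z}$-linear combination of monomials of strictly higher $X$-degree, plus mixed terms $X^{j}Y,\,X^{j}Y^{2}$ that the $XY$-relation absorbs into $X^{m-3}$ or kills outright (here the hypothesis $m\ge 6$ is used so that $2m-7\ge m-2$); an analogous carry holds in the $Y$-direction. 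Processing coefficients from the top degree downward therefore produces a normal form with all coefficients in $\{0,1,2\}$, giving the bound $3^{n-2}$. This filtration argument is essentially what the paper carries out in Propositions~\ref{prp:OrdPowX}--\ref{prp:OrdOne}, though framed there as computing the abelian type of $G'$ rather than as completing the present proof.
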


\begin{proof}
The statements are an immediate consequence of the Propositions
\ref{prp:LowPowX},
\ref{prp:LowPowY},
\ref{prp:LowPowYWithDefect},
and
\ref{prp:LowFurtherPolynomials}.
\end{proof}


\begin{remark}
\label{rmk:TemplateSmbOrd}
Theorem
\ref{thm:TemplateSmbOrd}
for \(\mathrm{cc}(G)\ge 2\)
is the Main Theorem of this article, since the ideal
\[\mathfrak{S}_{\mu,\nu}(\beta,\delta,\rho)
=\left(X^{\mu},\ Y^{\nu}-\rho X^{\mu-1},\ XY-\rho\delta X^{\mu-1},\ T(X,Y)+\rho\beta X^{\mu-1}\right)
\unlhd\mathbb{Z}\lbrack X,Y\rbrack,\]
with two parametrized exponents \(\mu\ge\nu\ge 1\) and three multiplicative parameters \(-1\le\beta,\delta,\rho\le 1\),
forms the \textit{general template} for all annihilators \(\mathfrak{A}=\mathfrak{A}(G)\)
which can arise from two-generated metabelian \(3\)-groups \(G\).
We briefly summarize in which way all the ideals
\(\mathfrak{R},\mathfrak{T},\mathfrak{V},\mathfrak{X},\mathfrak{Z},\mathfrak{Z}^\prime,\mathfrak{L}\),
which are well known since Scholz and Taussky
\cite[(6), p. 32]{SoTa},
and some new ideals \(\mathfrak{S},\mathfrak{U},\mathfrak{W},\mathfrak{Y}\)
can be obtained by \textit{specialization} from \(\mathfrak{S}_{\mu,\nu}(\beta,\delta,\rho)\).
We also indicate the transfer kernel type (TKT)
\cite{Ma2}
which is preferably associated with each ideal,
in particular, if \(G\) is realized as the second \(3\)-class group of a quadratic field
\cite{Ma4}.

\begin{enumerate}

\item
Groups of lower than second maximal class, i.e., of coclass \(\mathrm{cc}(G)\ge 3\):
\(\mu\ge\nu\ge 3\).

\begin{itemize}
\item
The template
\(\mathfrak{S}_{\mu,\nu}(\beta,\delta,\rho)
=\left(X^{\mu},Y^{\nu}-\rho X^{\mu-1},XY-\rho\delta X^{\mu-1},T(X,Y)+\rho\beta X^{\mu-1}\right)\)
itself occurs for TKT H.4, where \(\rho\ne 0\), \(\beta\ne 0\), \(\delta\ne 0\).
\item
The variant
\(\mathfrak{V}_{\mu,\nu}(\delta,\rho)
:=\left(X^{\mu},\ Y^{\nu}-\rho X^{\mu-1},\ XY-\rho\delta X^{\mu-1},\ T(X,Y)\right)\)
arises with TKT G.16, where \(\rho\ne 0\), \(\beta=0\), \(\delta\ne 0\).
\item
The variant
\(\mathfrak{T}_{\mu,\nu}(\beta,\rho)
:=\left(X^{\mu},\ Y^{\nu}-\rho X^{\mu-1},\ XY,\ T(X,Y)+\rho\beta X^{\mu-1}\right)\)
appears for TKT G.19, where \(\rho\ne 0\), \(\beta\ne 0\), \(\delta=0\).
\item
The new variant
\(\mathfrak{U}_{\mu,\nu}(\rho)
:=\left(X^{\mu},\ Y^{\nu}-\rho X^{\mu-1},\ XY,\ T(X,Y)\right)\)
is associated with TKT b.10, where \(\rho\ne 0\), \(\beta=0\), \(\delta=0\).
\item
The simplest variant
\(\mathfrak{R}_{\mu,\nu}
=\left(X^{\mu},\ Y^{\nu},\ XY,\ T(X,Y)\right)\), where \(\rho=0\),
occurs for many TKTs, F.7/11/12/13 and d.19/23/25.
\end{itemize}

\item
Groups of second maximal class, i.e., of coclass \(\mathrm{cc}(G)=2\):
\(\mu\ge\nu=2\).

\begin{itemize}
\item
\(\mathfrak{S}_{\mu,2}(\beta,\delta,\rho)
=\left(X^{\mu},Y^{2}-(1-\beta)\rho X^{\mu-1},XY-\rho\delta X^{\mu-1},T(X,Y)+\rho\beta X^{\mu-1}\right)\)
occurs for TKT H.4, where \(\rho\ne 0\), \(\beta\ne 0\), \(\delta\ne 0\).
\item
The variant
\(\mathfrak{Z}_{\mu}(\beta,\rho)
:=\left(X^{\mu},\ Y^{2}-(1-\beta)\rho X^{\mu-1},\ XY,\ T(X,Y)+\rho\beta X^{\mu-1}\right)\)
comes with TKT G.16, where \(\rho\ne 0\), \(\beta\ne 0\), \(\delta=0\).
\item
The variant
\(\mathfrak{Z}^\prime_{\mu}(\rho)
:=\left(X^{\mu},\ Y^{2}-\rho X^{\mu-1},\ XY,\ T(X,Y)\right)\)
is associated with TKT b.10, where \(\rho\ne 0\), \(\beta=0\), \(\delta=0\).
\item
The simplest variant
\(\mathfrak{X}_{\mu}
:=\left(X^{\mu},\ Y^{2},\ XY,\ T_3(X)\right)\), where \(\rho=0\),
occurs for many TKTs, E.6/8/9/14 and c.18/21.
Here, the trace polynomial degenerates to \(T_3(X)=X^2+3X+3\),
since \(Y^2,3Y\in\mathfrak{A}\) in \(T(X,Y)=X^2+3X+3+3Y+Y^2\).
\item
The special case
\(\mathfrak{L}_{2}
:=\left(X^{2},\ Y^{2},\ XY,\ 3\right)\), where \(\rho=0\), \(\mu=2\),
is associated with two TKTs, D.5/10.
The trace polynomial degenerates further to the constant \(3\),
since \(X^2,3X\in\mathfrak{A}\) in \(T_3(X)=X^2+3X+3\)
\end{itemize}

\item
Groups of maximal class, i.e., of coclass \(\mathrm{cc}(G)=1\):
\(\mu\ge\nu=1\).
Here, the parameter \(\gamma\) takes over the role of the parameter \(\rho\).

\begin{itemize}
\item 
\(\mathfrak{W}_{\mu}(\gamma)=\left(X^{\mu},\ Y-\gamma X^{\mu-1},\ T_3(X)\right)\)
comes with TKT a.1, where \(\gamma\ne 0\).
\item
\(\mathfrak{Y}_{\mu}=\left(X^{\mu},\ Y,\ T_3(X)\right)\)
appears for two TKTs a.2/3, where \(\gamma=0\).
\item
\(\mathfrak{Y}_{2}=\left(X^{2},\ Y,\ 3\right)\)
occurs for three TKTs a.2/3/3\({}^\ast\), where \(\gamma=0\), \(\mu=2\).
\item
\(\mathfrak{L}:=\mathfrak{Y}_{1}=\left(X,\ Y,\ 3\right)\)
arises with a single TKT A.1, where \(\gamma=0\), \(\mu=1\).
Here, spezialization stops,
since \(\mathfrak{L}\) is a \textit{maximal} ideal of the ring \(\mathbb{Z}\lbrack X,Y\rbrack\).
It is associated with the two extra special groups \(G\) of order \(27\).
One of them can be realized as the second \(3\)-class group of cyclic cubic fields
but not of quadratic fields.
\end{itemize}

\end{enumerate}

\noindent
Of course, one can get rid of the explicit parameters \(\beta,\delta,\rho\)
in the polynomial generators of the annihilator \(\mathfrak{A}\)
by putting \(\pm\) signs instead of nonzero values \(\rho\beta,\rho\delta,\rho\),
as it is done in
\cite{SoTa}.

\end{remark}



\subsection{The groups \(G\) with bicyclic centre \(\zeta_1(G)\simeq C_3\times C_3\)}
\label{ss:LowBic}

\noindent
In this section, we emphasize those groups \(G\)
whose annihilators \(\mathfrak{A}\) are most amenable to
determining the structure of the commutator subgroup \(G^\prime\) in \S\
\ref{s:StrComSbg}.

We begin with metabelian \(3\)-groups \(G\)
of second maximal class with CF-invariant \(e=3\).

\begin{theorem}
\label{thm:SmbOrdSecBic}

Let \(G\) be a metabelian \(3\)-group of coclass \(r=\mathrm{cc}(G)=2\), \(e=r+1=3\),
order \(\lvert G\rvert=3^n\ge 3^5\), nilpotency class \(c=\mathrm{cl}(G)=m-1\ge 3\), \(4\le m=n-1\),
and defect \(k=0\),
which is isomorphic to the representative \(G_\rho^{m,m+1}(\alpha,\beta,\gamma,\delta)\)
with parameters \(\rho=0\) and \(-1\le\alpha,\beta,\gamma,\delta\le 1\).

Then the annihilator \(\mathfrak{A}=\mathfrak{A}(G)\) of \(G\) is given by

\begin{equation}
\label{eqn:SmbOrdSecBic}
\mathfrak{A}=\mathfrak{X}_{m-2}=\mathfrak{R}_{m-2,2}=\left(X^{m-2},\ Y^2,\ XY,\ T_3(X)\right)
\end{equation}

\noindent
with the trace polynomial \(T_3(X)=X^2+3X+3\).

\end{theorem}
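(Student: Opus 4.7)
The plan is to derive this statement as a direct specialization of the Main Theorem (Theorem \ref{thm:TemplateSmbOrd}). Since \(G\) has coclass \(r=2\), CF-invariant \(e=r+1=3\), and defect \(k=0\), the parameter \(\rho\) is forced to vanish by the connection \(k=0\Longleftrightarrow\rho=0\) established in Formula \eqref{eqn:LowCentralRel}. Thus \(G\) satisfies the hypotheses of Theorem \ref{thm:TemplateSmbOrd}(1), which immediately yields
\[
\mathfrak{A}=\mathfrak{R}_{m-2,\,e-1}=\mathfrak{R}_{m-2,\,2}=\bigl(X^{m-2},\ Y^{2},\ XY,\ T(X,Y)\bigr),
\]
independently of the remaining parameters \(\alpha,\beta,\gamma,\delta\).

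The only remaining task is to verify the degeneration of the bivariate trace polynomial \(T(X,Y)=X^{2}+3X+3+3Y+Y^{2}\) into the univariate trace polynomial \(T_{3}(X)=X^{2}+3X+3\) modulo the other three generators. First I would observe that \(Y^{2}\) already lies in \(\mathfrak{R}_{m-2,2}\), so only the linear term \(3Y\) needs to be absorbed. This follows from the simple identity
\[
Y\cdot T(X,Y)=X^{2}Y+3XY+3Y+3Y^{2}+Y^{3},
\]
in which every monomial except \(3Y\) is divisible by \(XY\) or \(Y^{2}\); hence \(3Y\in\mathfrak{A}\), and consequently \(T_{3}(X)=T(X,Y)-Y^{2}-3Y\in\mathfrak{A}\). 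The reverse containment is obtained symmetrically: since \(Y\cdot T_{3}(X)=X^{2}Y+3XY+3Y\), the element \(3Y\) lies in the ideal \((XY,T_{3}(X))\), and thus \(T(X,Y)=T_{3}(X)+3Y+Y^{2}\) also lies in \((X^{m-2},Y^{2},XY,T_{3}(X))\).

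I expect no genuine obstacle here, since all the delicate case distinctions (the bifurcation between CF- and BCF-groups, the exceptions \(\langle 729,44\ldots 47\rangle\), the role of the defect) have been concentrated into the Propositions \ref{prp:LowPowX}--\ref{prp:LowFurtherPolynomials} that feed into Theorem \ref{thm:TemplateSmbOrd}. The main (and rather mild) point of care is simply to check that the hypotheses of Theorem \ref{thm:TemplateSmbOrd}(1) are met by the parameter tuple \((m,n,e,k)=(m,m+1,3,0)\) with \(m\ge 4\), which is immediate from the chain \(4\le m=n-1\) and the identity \(e=n-m+2=3\) imposed in the statement, together with the equivalence \(k=0\Longleftrightarrow\rho=0\) recalled above.
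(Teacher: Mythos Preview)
Your proof is correct and follows essentially the same approach as the paper, which simply invokes Theorem~\ref{thm:TemplateSmbOrd}(1) together with item~(2) of Remark~\ref{rmk:TemplateSmbOrd}. You supply more detail than the paper does on the degeneration \(T(X,Y)\rightsquigarrow T_3(X)\); the paper's remark merely asserts \(Y^2,3Y\in\mathfrak{A}\), whereas you actually verify the inclusion \(3Y\in\mathfrak{A}\) and the reverse containment, which is a welcome elaboration.
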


\begin{proof}
This follows immediately from item (1) in Theorem
\ref{thm:TemplateSmbOrd}
and item (2) in Remark
\ref{rmk:TemplateSmbOrd}.
\end{proof}


\noindent
The groups in Theorem
\ref{thm:SmbOrdSecBic},
which can be realized as second \(3\)-class groups of quadratic fields
\cite{Ma4},
have transfer kernel types \(\mathrm{D.5/10}\) for \(m=4\),
and generally \(\mathrm{E.6/8/9/14}\) or \(\mathrm{c.18/21}\) for \(m\ge 5\)
\cite{Ma2}.


\begin{remark}
\label{rmk:SmbOrdSecBic}
Theorem
\ref{thm:SmbOrdSecBic}
provides a new method for deriving the results by Scholz and Taussky
\cite{SoTa}
for the transfer kernel types \(\mathrm{D.5/10}\) and \(\mathrm{E.6/8/9/14}\)
\cite{Ma2}.
The novelty is the explicit connection of the exponent \(\mu\)
of the smallest power \(X^\mu\) in the annihilator \(\mathfrak{A}\),

\begin{equation}
\label{eqn:PolarizationMu}
\mu=m-2,
\end{equation}

\noindent
with the nilpotency index \(m\) of the group \(G\).
\end{remark}


\begin{remark}
\label{rmk:ClassFieldTheoryMu}
In the case of an arithmetical realization
\(G=\mathrm{Gal}(\mathrm{F}_3^2(K)\vert K)\)
of the group \(G\)
as the second \(3\)-class group of a number field \(K\), Formula
\eqref{eqn:PolarizationMu},
\(\mu=m-2\) admits
the determination of the exponent \(\mu\) in the annihilator \(\mathfrak{A}=\mathfrak{X}_{\mu}\),
if the orders \(3^{m-1}\), resp.  \((3^{m-1-k},3^{3},3^3,3^3)\), of the \(3\)-class groups
\(\mathrm{Cl}_3(\mathrm{F}_3^1(K))\) of the Hilbert \(3\)-class field \(\mathrm{F}_3^1(K)\),
resp. \(\mathrm{Cl}_3(L_i)\) of the four unramified cyclic cubic extensions \(L_i\), \(1\le i\le 4\),
of \(K\) are known
\cite[Thm. 3.12]{Ma4}.
If \(k=0\) is warranted, then computing the \(L_i\) is sufficient.
\end{remark}


\noindent
Next consider metabelian \(3\)-groups \(G\)
of lower than second maximal class with CF-invariant \(e\ge 4\).

\begin{theorem}
\label{thm:SmbOrdLowBic}

Let \(G\) be a metabelian \(3\)-group of coclass \(r=\mathrm{cc}(G)\ge 3\), \(e=r+1\ge 4\),
order \(\lvert G\rvert=3^n\ge 3^7\), nilpotency class \(c=\mathrm{cl}(G)=m-1\ge 4\), \(5\le m\le n-2\), \(n\le 2m-3\),
and defect \(k=0\),
which is isomorphic to the representative \(G_\rho^{m,n}(\alpha,\beta,\gamma,\delta)\)
with parameters \(\rho=0\) and \(-1\le\alpha,\beta,\gamma,\delta\le 1\).

Then the annihilator \(\mathfrak{A}=\mathfrak{A}(G)\) of \(G\) is given by

\begin{equation}
\label{eqn:SmbOrdLowBic}
\mathfrak{A}=\mathfrak{R}_{m-2,e-1}=\left(X^{m-2},\ Y^{e-1},\ XY,\ T(X,Y)\right)
\end{equation}

\noindent
with the trace polynomial \(T(X,Y)=T_3(X)+T_3(Y)-3=X^2+3X+3+3Y+Y^2\).

\end{theorem}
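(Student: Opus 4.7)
The plan is to observe that Theorem \ref{thm:SmbOrdLowBic} is a direct specialization of Theorem \ref{thm:TemplateSmbOrd}, so the proof reduces to checking that the hypotheses fit case (1) of that template theorem and that none of the exceptional configurations identified in the preceding propositions arise.

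First I would verify the hypotheses. The assumptions \(r=\mathrm{cc}(G)\geq 3\), \(e\geq 4\), \(m\geq 5\), and defect \(k=0\) (whence \(\rho=0\)) all lie strictly inside the range covered by the template theorem, so case (1) of Theorem \ref{thm:TemplateSmbOrd} applies verbatim and yields \(\mathfrak{A}=\mathfrak{R}_{m-2,e-1}=(X^{m-2},\,Y^{e-1},\,XY,\,T(X,Y))\). In particular, the exceptional four groups \(\langle 729,44\ldots 47\rangle\) flagged in Propositions \ref{prp:LowPowX} and \ref{prp:LowPowYWithDefect} are excluded here, since they have \(e=3\) and \(k=1\), violating both \(e\geq 4\) and \(k=0\).

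Second, I would assemble the four generators by citing the appropriate preparatory propositions: Proposition \ref{prp:LowPowX} gives \(X^{m-2}\in\mathfrak{A}\) as the smallest power of \(X\); Proposition \ref{prp:LowPowY} (applied with \(k=0\) and \(e\geq 4\)) gives \(Y^{e-1}\in\mathfrak{A}\) as the smallest power of \(Y\); and Proposition \ref{prp:LowFurtherPolynomials}(1) yields \(XY\in\mathfrak{A}\) and \(T(X,Y)\in\mathfrak{A}\), with both bivariate relations simplified by \(\rho=0\) so that no \(X^{m-3}\)-correction term appears. Since Remark \ref{rmk:TemplateSmbOrd}(1) explicitly records \(\mathfrak{R}_{\mu,\nu}\) as the \(\rho=0\) specialization of the template \(\mathfrak{S}_{\mu,\nu}(\beta,\delta,\rho)\), the generating set is identified.

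Because Theorem \ref{thm:TemplateSmbOrd} already delivers the ideal in full, there is no real obstacle in this proof; the only point requiring care is the bookkeeping that the exceptional small-class, small-coclass configurations (\(m=4\), \(m=5\) with \(\rho\beta=1\), and \(e=3\)) do not interfere with the current hypotheses. Once this is noted, the proof collapses to a one-line appeal to the Main Theorem, entirely analogous to the proof of Theorem \ref{thm:SmbOrdSecBic}.
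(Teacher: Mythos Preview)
Your proposal is correct and follows exactly the same approach as the paper: the paper's proof is literally a one-sentence appeal to item (1) of Theorem \ref{thm:TemplateSmbOrd} together with item (1) of Remark \ref{rmk:TemplateSmbOrd}. Your version is simply more explicit about the bookkeeping (verifying that the exceptional configurations with \(e=3\) or \(k=1\) are excluded and tracing each generator back to the relevant proposition), but this is elaboration rather than a different argument.
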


\begin{proof}
An immediate consequence of item (1) in Theorem
\ref{thm:TemplateSmbOrd}
and item (1) in Remark
\ref{rmk:TemplateSmbOrd}.
\end{proof}


\noindent
The groups in Theorem
\ref{thm:SmbOrdLowBic},
which can be realized as second \(3\)-class groups of quadratic fields
\cite{Ma4},
have the transfer kernel types \(\mathrm{F.7/11/12/13}\) or \(\mathrm{d.19/23/25}\)
\cite{Ma2}.


\begin{remark}
\label{rmk:SmbOrdLowBic}
Theorem
\ref{thm:SmbOrdLowBic}
provides a new method for deriving the results by Scholz and Taussky
\cite{SoTa}
for the transfer kernel types \(\mathrm{F.7/11/12/13}\)
\cite{Ma2}.
The novelty is the explicit connection of the exponents \(\mu\), resp. \(\nu\),
of the smallest power \(X^\mu\), resp. \(Y^\nu\), in the annihilator \(\mathfrak{A}\),

\begin{equation}
\label{eqn:PolarizationNu}
\mu=m-2 \quad \text{ and } \quad \nu=e-1,
\end{equation}

\noindent
with the nilpotency index \(m\), resp. the CF-invariant \(e\), of the group \(G\).
\end{remark}


\begin{remark}
\label{rmk:ClassFieldTheoryNu}
In the case of an arithmetical realization
\(G=\mathrm{Gal}(\mathrm{F}_3^2(K)\vert K)\)
of the group \(G\)
as the second \(3\)-class group of a number field \(K\), Formula
\eqref{eqn:PolarizationNu},
\(\mu=m-2\), \(\nu=e-1\), admits
the determination of the exponents \(\mu\) and \(\nu\) in the annihilator \(\mathfrak{A}=\mathfrak{R}_{\mu,\nu}\),
if the orders \(3^{m+e-4}\), resp.  \((3^{m-1-k},3^{e},3^3,3^3)\), of the \(3\)-class groups
\(\mathrm{Cl}_3(\mathrm{F}_3^1(K))\) of the Hilbert \(3\)-class field \(\mathrm{F}_3^1(K)\),
resp. \(\mathrm{Cl}_3(L_i)\) of the four unramified cyclic cubic extensions \(L_i\), \(1\le i\le 4\),
of \(K\) are known
\cite[Thm. 3.12]{Ma4}.
If \(k=0\) is warranted, then computing the \(L_i\) is sufficient.
\end{remark}



\subsection{The smallest groups \(G\) with cyclic centre \(\zeta_1(G)\simeq C_3\)}
\label{ss:LowSmlCyc}

\noindent
Up to now, we have not exploited the items (3) and (4) of Proposition
\ref{prp:LowFurtherPolynomials}
yet.
They concern exceptional groups with invariants \(m=5\), \(n=6\), \(\rho=\pm 1\),
whose relational parameters \(\beta,\delta,\rho\) were given in Table
\ref{tbl:SmlCycExpX}.

\begin{theorem}
\label{thm:SmbOrdSmlCyc}

The exceptional annihilators of the twelve metabelian \(3\)-groups \(G\)
of order \(\lvert G\rvert=3^6\), nilpotency class \(\mathrm{cl}(G)=m-1=4\), and defect \(k=1\),
are given in dependence on the relational parameters \(\beta,\delta,\rho\) by Table
\ref{tbl:SmbOrdSmlCyc}.

In particular, the annihilator of the four isomorphism classes of groups \(G\)
with relational parameters \(\beta=1\), \(\delta=1\), \(\rho=1\),
having the transfer kernel type \(\mathrm{H.4}\),
is given by
\[\mathfrak{A}=(X^2,Y^2,XY+3)\]
and the structure of the additive group underlying the residue class ring
\(\mathbb{Z}\lbrack X,Y\rbrack/\mathfrak{A}\)
with basis \((1,X,Y)\) is of type
\[(9,3,3).\]

\end{theorem}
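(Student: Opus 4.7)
My strategy is to match each of the twelve isomorphism classes to one of the cases (3) and (4) of Proposition \ref{prp:LowFurtherPolynomials}, using the parameter values $(\beta,\delta,\rho)$ assembled in Table \ref{tbl:SmlCycExpX}. The eight classes with $\rho\beta\ne 1$, namely the six of type b.10 ($\langle 729,34\ldots 39\rangle$) and the two of type G.19 ($\langle 729,56\ldots 57\rangle$), fall under item (3); the template \eqref{eqn:SmbOrdSec} of Theorem \ref{thm:TemplateSmbOrd} with $m-3=2$ then specializes by plain substitution to explicit ideals $\mathfrak{S}_{3,2}(\beta,\delta,\rho)$, giving those rows of the table. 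The four classes $\langle 729,44\ldots 47\rangle$ of type H.4 are governed by item (4) and require a separate argument, because the degeneracy $s_4=1$ forces both $X^2\in\mathfrak{A}$ and $Y^2\in\mathfrak{A}$ and collapses the template.

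For the H.4 case with $(\beta,\delta,\rho)=(1,1,1)$ I would first harvest ideal generators from Proposition \ref{prp:LowFurtherPolynomials}(4): the annihilator contains $X^2$, $Y^2$, and $XY+Y^2+3Y+3$. Reducing the latter modulo $Y^2$ leaves $XY+3Y+3\in\mathfrak{A}$, and the identity $Y(XY+3Y+3)=XY^2+3Y^2+3Y\equiv 3Y\pmod{(Y^2)}$ then shows $3Y\in\mathfrak{A}$, whence also $XY+3\in\mathfrak{A}$. This establishes the inclusion $(X^2,Y^2,XY+3)\subseteq\mathfrak{A}$.

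The opposite inclusion I would obtain by a counting argument. Put $R:=\mathbb{Z}[X,Y]/(X^2,Y^2,XY+3)$. The relations $X^2\equiv 0$, $Y^2\equiv 0$, $XY\equiv -3$ allow every polynomial to be reduced to an integer combination of $1,X,Y$; the identities $X(XY+3)=X^2Y+3X$ and $Y(XY+3)=XY^2+3Y$ give $3X\equiv 3Y\equiv 0$ in $R$, and $9=3\cdot 3\equiv -3XY=-(3X)Y\equiv 0$ follows. Hence the additive order of $R$ is at most $9\cdot 3\cdot 3=81$. By Corollary \ref{cor:CommutatorSubgroup} there is a natural surjection $R\twoheadrightarrow\mathbb{Z}[X,Y]/\mathfrak{A}\simeq G'$, and $|G'|=|G|/|G/G'|=3^6/3^2=81$; matching orders forces this surjection to be an isomorphism, which simultaneously yields $\mathfrak{A}=(X^2,Y^2,XY+3)$ and identifies the abelian type of $R$ as $(9,3,3)$ with basis $(1,X,Y)$.

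The main obstacle is the upgrade from the containment $(X^2,Y^2,XY+3)\subseteq\mathfrak{A}$ to equality: an internal verification would require exhibiting elements of $G'$ of orders $9,3,3$ realizing the basis images $s_2,s_2^{x-1},s_2^{y-1}$, which is feasible but delicate through the Nebelung presentation. The Furtw\"angler epimorphism in Corollary \ref{cor:CommutatorSubgroup}, coupled with the external input $|G'|=81$, sidesteps this and yields both the equality of ideals and the additive structure in a single step.
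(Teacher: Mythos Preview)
Your argument is correct and tracks the paper's proof closely: both reduce the non-H.4 rows to Proposition~\ref{prp:LowFurtherPolynomials}(3) and treat the H.4 case via item~(4), then extract the generating set \((X^2,Y^2,XY+3)\) and read off the basis \((1,X,Y)\). The one genuine methodological difference is how you close the gap from containment \((X^2,Y^2,XY+3)\subseteq\mathfrak{A}\) to equality. The paper works internally: it records \(s_2^3=\sigma_4\ne 1\) (so \(3\notin\mathfrak{A}\)) and relies on the earlier exclusions \(X,Y\notin\mathfrak{A}\) from Propositions~\ref{prp:LowPowX} and~\ref{prp:LowPowYWithDefect} to pin down the exact orders \(9,3,3\). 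You instead bound \(\lvert\mathbb{Z}[X,Y]/(X^2,Y^2,XY+3)\rvert\le 81\) and match against \(\lvert G'\rvert=3^6/3^2=81\) via Corollary~\ref{cor:CommutatorSubgroup}. Your route is cleaner and avoids revisiting the presentation, at the cost of importing the order of \(G'\) as an external datum; the paper's route is self-contained within the relation calculus. Two small cautions: your appeal to ``the template~\eqref{eqn:SmbOrdSec}'' is formally out of range since Theorem~\ref{thm:TemplateSmbOrd}(3) assumes \(m\ge 6\), so for \(m=5\) you must stay with Proposition~\ref{prp:LowFurtherPolynomials}(3), whose coefficients carry the extra factor \((1-\rho\beta)\); and ``plain substitution'' understates the simplification needed to reach the compact forms in Table~\ref{tbl:SmbOrdSmlCyc}, though the paper is equally terse on this point.
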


\begin{proof}
Generally, this follows by inserting the relevant values of \(\beta,\delta,\rho\) in Table
\ref{tbl:SmbOrdSmlCyc}
into the polynomial generators of item (3) in Proposition
\ref{prp:LowFurtherPolynomials}.

For \(m=5\), \(n=6\), \(e=3\), and \(\beta=1\), \(\delta=1\), \(\rho=1\),
we can use item (4) in Proposition
\ref{prp:LowFurtherPolynomials}.
However, we exemplarily give a full demonstration:

\begin{enumerate}

\item
The supplementary connecting relation for \(s_4\) yields\\
\(s_2^{(x-1)^2}=s_4=\sigma_4^{-1}\sigma_5^{-1}\sigma_{m-1}^{\rho\beta}=\sigma_4^{\rho\beta-1}=1\),
since \(\sigma_5=\sigma_m=1\). Thus, we get \(X^2\in\mathfrak{A}\).

\item
From the supplementary connecting relation for \(t_4\),
together with the first central relation for \(\tau_{e+1}\), we obtain
\(s_2^{(y-1)^2}=t_4=\tau_4\tau_5\sigma_{m-1}^{\rho\beta}
=\tau_4\sigma_{m-1}^{\rho\beta}\),
since \(\tau_5=\tau_{e+2}=1\).\\
It follows that
\(s_2^{(y-1)^2}=\tau_{e+1}\sigma_{m-1}^{\rho\beta}
=\sigma_{m-1}^{-\rho}\sigma_{m-1}^{\rho\beta}=\sigma_{m-1}^{\rho(\beta-1)}=1\) and
\(Y^2\in\mathfrak{A}\).

\item
The relation for the third power of \(s_2\) yields\\
\(s_2^3=\sigma_4\sigma_{m-1}^{-\rho\beta}\tau_4^{-1}
=\sigma_{m-1}^{1-\rho\beta}\tau_{e+1}^{-1}=\sigma_{m-1}^{1-\rho\beta}\sigma_{m-1}^{\rho}
=\sigma_4^{1-\rho(\beta-1)}=\sigma_4\),\\
and consequently
\(s_2^9=\sigma_4^3=1\) and \(9\in\mathfrak{A}\).

\item
The first central relation for \(s_2^{(x-1)(y-1)}\) yields\\
\(s_2^{(x-1)(y-1)}=\sigma_{m-1}^{-\rho\delta}=\sigma_4^{-\rho\delta}=\sigma_4^{-1}
=s_2^{-3}\), and thus \(s_2^{(x-1)(y-1)+3}=1\) and \(XY+3\in\mathfrak{A}\).

\item
Using the inclusion \(X^2\in\mathfrak{A}\) and the power rule for commutators, we obtain\\
\(s_2^{3(x-1)}=s_2^{(x-1)^3+3(x-1)^2+3(x-1)}=s_2^{(x-1)((x-1)^2+3(x-1)+3)}=\lbrack s_2,x\rbrack^{(x-1)^2+3(x-1)+3}\)\\
\(=\lbrack s_2,x\rbrack^{1+x+x^2}=\lbrack s_2,x^3\rbrack=1\), since \(x^3\in G^\prime\).\\
Consequently, we have \(3X\in\mathfrak{A}\).
Similarly, it follows that \(3Y\in\mathfrak{A}\).

\end{enumerate}

\noindent
As polynomial generators of the annihilator \(\mathfrak{A}\) it suffices to take
\((X^2,Y^2,XY+3)\),\\
since
\(0\equiv X(XY+3)=X^2Y+3X\equiv 3X\), \quad
\(0\equiv Y(XY+3)=XY^2+3Y\equiv 3Y\),\\
and
\(0\equiv 3(XY+3)=3XY+9\equiv 9\pmod{\mathfrak{A}}\).\\
A basis of the additive group
\(\left(\mathbb{Z}\lbrack X,Y\rbrack/\mathfrak{A},+\right)\),
underlying the residue class ring of \(\mathbb{Z}\lbrack X,Y\rbrack\) modulo \(\mathfrak{A}\),
is given by
\((1,X,Y)\),
since all powers \(X^i\) and \(Y^i\), for \(i\ge 2\),
and thus all power products \(X^jY^\ell\), for \(j+\ell\ge 3\),
are contained in \(\mathfrak{A}\),
and \(XY\equiv -3\) is congruent to a multiple of \(1\).
The orders of the basis elements are given by
\(\mathrm{ord}(1)=9\),
\(\mathrm{ord}(X)=3\),
\(\mathrm{ord}(Y)=3\),
whence \(G^\prime\) is of type \((9,3,3)\).
\end{proof}


\begin{table}[ht]
\caption{Exceptional annihilators for \(m=5\), \(n=6\), \(e=3\), \(\rho=\pm 1\)}
\label{tbl:SmbOrdSmlCyc}
\begin{center}
\begin{tabular}{|c|l|c|rrr|c|}
\hline
 Groups                             & TKT  & Classes & \(\beta\) & \(\delta\) & \(\rho\) & \(\mathfrak{A}\)                  \\
\hline
 \(\langle 729,37\ldots 39\rangle\) & b.10 &   \(3\) &     \(0\) &      \(0\) &    \(1\) & \(\left(X^2-Y^2,\ XY,\ 9\right)\) \\
 \(\langle 729,44\ldots 47\rangle\) & H.4  &   \(4\) &     \(1\) &      \(1\) &    \(1\) & \(\left(X^2,\ Y^2,\ XY+3\right)\) \\
 \(\langle 729,56\ldots 57\rangle\) & G.19 &   \(2\) &    \(-1\) &      \(0\) &    \(1\) & \(\left(X^2-Y^2,\ XY,\ 3\right)\) \\
 \(\langle 729,34\ldots 36\rangle\) & b.10 &   \(3\) &     \(0\) &      \(0\) &   \(-1\) & \(\left(X^2+Y^2,\ XY,\ 3\right)\) \\
\hline
\end{tabular}
\end{center}
\end{table}



\section{Annihilators of metabelian \(p\)-groups of maximal class}
\label{s:MaxSmbOrdP}

\begin{theorem}
\label{thm:SmbOrdMaxP}

(Main Theorem for \(r=1\).)
For a metabelian \(p\)-group \(G\) of coclass \(r=\mathrm{cc}(G)=1\), nilpotency class \(c=\mathrm{cl}(G)=m-1\ge 2\),
that is, with nilpotency index \(m\ge 3\), and defect \(0\le k\le m-4\),
which is isomorphic to the representative \(G_a^m(z,w)\) with parameters \(0\le w,z\), \(a(m-1),\ldots,a(m-k)<p\),
where \(k=0\) for \(m\le 4\), the annihilator ideal \(\mathfrak{A}=\mathfrak{A}(G)\) of \(G\) is given by

\begin{equation}
\label{eqn:SmbOrdMaxPWithDef}
\mathfrak{A}=\mathfrak{W}_{m-2}(a):=\left(X^{m-2},\ Y-\sum_{\ell=1}^k\,a(m-\ell)X^{m-2-\ell},\ \sum_{\ell=1}^p\,\binom{p}{\ell}X^{\ell-1}\right).
\end{equation}

\noindent
In particular, if the defect of \(G\) is \(k=0\), that is, the family \quad \(a\) \quad of parameters is empty, then

\begin{equation}
\label{eqn:SmbOrdMaxP}
\mathfrak{A}=\mathfrak{Y}_{m-2}:=\mathfrak{W}_{m-2}(\emptyset)=\left(X^{m-2},\ Y,\ \sum_{\ell=1}^p\,\binom{p}{\ell}X^{\ell-1}\right).
\end{equation}

\end{theorem}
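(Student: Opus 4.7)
The plan is to follow the three-step blueprint of the proof of Theorem \ref{thm:SmbOrdMax3}, systematically translating each relation in Miech's presentation of $G_a^m(z,w)$ from \S\ref{ss:MaxClassP} into a polynomial generator of $\mathfrak{A}$ via Furtw\"angler's epimorphism $\Psi$. First, the nilpotency built into the presentation, together with the symbolic power identity $s_j=s_2^{(x-1)^{j-2}}$ of \eqref{eqn:SymbolicPowersMax3}, gives $s_m=s_2^{(x-1)^{m-2}}=1$ but $s_{m-1}=s_2^{(x-1)^{m-3}}\ne 1$, hence $X^{m-2}\in\mathfrak{A}$ while $X^{m-3}\notin\mathfrak{A}$, accounting for the first generator.

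Second, I would rewrite the commutator relation \eqref{eqn:CommutatorRelMaxP} using $\lbrack s_2,y\rbrack=s_2^{y-1}$ and $s_{m-\ell}=s_2^{(x-1)^{m-\ell-2}}$. The identity $\lbrack s_2,y\rbrack=\prod_{\ell=1}^k s_{m-\ell}^{a(m-\ell)}$ then translates into
\begin{equation*}
s_2^{(y-1)-\sum_{\ell=1}^k a(m-\ell)(x-1)^{m-\ell-2}}=1,
\end{equation*}
so $Y-\sum_{\ell=1}^k a(m-\ell)X^{m-2-\ell}\in\mathfrak{A}$, which is the second listed generator (and collapses to $Y$ when $k=0$, as forced for $m\le 4$). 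Third, I would apply the third line of \eqref{eqn:PowerRelMaxP} with $j=1$, namely $s_2^p\prod_{\ell=2}^p s_{\ell+1}^{\binom{p}{\ell}}=1$. Substituting $s_{\ell+1}=s_2^{(x-1)^{\ell-1}}$ and absorbing the $\ell=1$ term $p=\binom{p}{1}$ gives
\begin{equation*}
s_2^{\sum_{\ell=1}^p \binom{p}{\ell}(x-1)^{\ell-1}}=1,
\end{equation*}
whence the generalized trace polynomial $\sum_{\ell=1}^p\binom{p}{\ell}X^{\ell-1}=((X+1)^p-1)/X$ lies in $\mathfrak{A}$, producing the third generator and degenerating to $T_3(X)=X^2+3X+3$ for $p=3$.

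Together these three inclusions establish $\mathfrak{A}\supseteq\mathfrak{W}_{m-2}(a)$. For the reverse inclusion I would invoke Corollary \ref{cor:CommutatorSubgroup}: since $\mathbb{Z}\lbrack X,Y\rbrack/\mathfrak{A}\simeq G^\prime$ has order $p^{m-2}$, it suffices to verify that $\mathbb{Z}\lbrack X,Y\rbrack/\mathfrak{W}_{m-2}(a)$ also has cardinality $p^{m-2}$. Eliminating $Y$ via the second generator reduces the count to $\mathbb{Z}\lbrack X\rbrack/(X^{m-2},h(X))$ with $h(X)=((X+1)^p-1)/X$; this polynomial is Eisenstein at $p$ (monic of degree $p-1$, constant term $p$, and all intermediate binomial coefficients divisible by $p$), which is exactly the feature needed. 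The main obstacle I anticipate is this final cardinality verification: a short induction or filtration on the image of $X$ in the Eisenstein quotient is required to confirm the order $p^{m-2}$, whereas the first three steps are routine generalizations of the $p=3$ arguments. The remaining power relations for $x^p$ and $y^p$ in \eqref{eqn:PowerRelMaxP} are not needed for generators of $\mathfrak{A}$ and would enter only if one also formulates the analogue of the Schreier conditions from Corollary \ref{cor:SchreierRelMax3}.
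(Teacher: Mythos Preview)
Your three-step derivation of the generators is essentially identical to the paper's proof: the paper extracts $X^{m-2}$ from the nilpotency relations, the polynomial $Y-\sum_{\ell=1}^k a(m-\ell)X^{m-2-\ell}$ from the commutator relation \eqref{eqn:CommutatorRelMaxP}, and the trace polynomial from the $j=1$ case of the third line of \eqref{eqn:PowerRelMaxP}, exactly as you do.

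The one genuine difference is your final paragraph. The paper's proof stops after establishing $\mathfrak{W}_{m-2}(a)\subseteq\mathfrak{A}$; it does \emph{not} address the reverse inclusion at all in this proof. Your proposed cardinality argument via Corollary~\ref{cor:CommutatorSubgroup} and the Eisenstein property of $h(X)=\sum_{\ell=1}^p\binom{p}{\ell}X^{\ell-1}$ is correct and makes the statement an honest equality of ideals. The order computation you sketch is essentially what the paper carries out later, in Proposition~\ref{prp:OrdPowXMaxP} and Theorem~\ref{thm:ComSbgMaxP}, where the recursive use of $T_p(X)$ yields $\lvert\mathbb{Z}[X,Y]/\mathfrak{W}_{m-2}(a)\rvert=p^{m-2}=\lvert G'\rvert$. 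So your addition closes a gap the paper leaves open here and only fills implicitly in \S\ref{s:StrComSbg}.
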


\begin{proof}
The proof is similar to the proof of Theorem
\ref{thm:SmbOrdMax3}.

According to the nilpotency relations in Formula
\eqref{eqn:NilpotencyRelMax3},
which are valid independently of the prime \(p\),
we have
\(1=s_m=s_2^{(x-1)^{m-2}}\) for \(j=m\),
but
\(1\ne s_{m-1}=s_2^{(x-1)^{m-3}}\) for \(j=m-1\).
As before, this implies that
\(X^{m-2}\in\mathfrak{A}\), but \(X^{m-3}\notin\mathfrak{A}\).

However, the commutator relation for \(s_2\) in Formula
\eqref{eqn:CommutatorRelMax3}
must now be replaced by Formula
\eqref{eqn:CommutatorRelMaxP},
and we obtain\\
\(s_2^{y-1}=\lbrack s_2,y\rbrack=\prod_{\ell=1}^k\,s_{m-\ell}^{a(m-\ell)}
=\prod_{\ell=1}^k\,s_2^{(x-1)^{m-\ell-2}\cdot a(m-\ell)}=s_2^{\sum_{\ell=1}^k\,a(m-\ell)(x-1)^{m-\ell-2}}\),
resp.\\
\(1=s_2^{(y-1)-\sum_{\ell=1}^k\,a(m-\ell)(x-1)^{m-\ell-2}}\),
which implies the inclusion
\(Y-\sum_{\ell=1}^k\,a(m-\ell)X^{m-\ell-2}\in\mathfrak{A}\).

The third power relation in Formula
\eqref{eqn:PowerRelMaxP}
for \(j=1\) yields\\
\(1=s_{1+1}^p\prod_{\ell=2}^p\,s_{1+\ell}^{\binom{p}{\ell}}=\prod_{\ell=1}^p\,s_{1+\ell}^{\binom{p}{\ell}}
=\prod_{\ell=1}^p\,s_2^{(x-1)^{\ell-1}\cdot\binom{p}{\ell}}
=s_2^{\sum_{\ell=1}^p\,\binom{p}{\ell}(x-1)^{\ell-1}}\).\\
According to the binomial formula, the exponent
\(\sum_{\ell=1}^p\,\binom{p}{\ell}(x-1)^{\ell-1}=\sum_{\ell=1}^p\,x^{\ell-1}=:\mathrm{tr}_p(x)\)
is equal to the \(p\)th \textit{trace element}.
Consequently, the pre-image \(T_p(X):=\sum_{\ell=1}^p\,\binom{p}{\ell}X^{\ell-1}\in\mathfrak{A}\)
of the trace element \(\mathrm{tr}_p(x)=\sum_{\ell=1}^p\,\binom{p}{\ell}(x-1)^{\ell-1}\)
under \(\psi\) lies in the annihilator.
\end{proof}

\begin{remark}
\label{rmk:MaxSmbOrdP}
The polynomial generators of our ideal \(\mathfrak{W}_{m-2}(a)\)
are briefly indicated by Miech in
\cite[p. 94]{Mi2}.
\end{remark}



\section{The structure of the commutator subgroup \(G^\prime\) of \(G\)}
\label{s:StrComSbg}

\begin{definition}
\label{dfn:NearlyHomocyclic}
Let \(p\ge 2\) be a prime number, and \(\mu\ge 1\) be a positive integer.
Assume that the unique quotient \(q\ge 0\) and remainder \(0\le r<p-1\)
of the Euclidean division of \(\mu\) by \(p-1\) are given by
\(\mu=q\cdot (p-1)+r\).

Then the \textit{nearly homocyclic} abelian \(p\)-group \(A(p,\mu)\) of order \(p^\mu\)
is defined by its type invariants

\begin{equation}
\label{eqn:NearlyHomocyclic}
\begin{cases}
\left(\overbrace{p^{q+1},\ldots,p^{q+1}}^{r \text{ times}},\overbrace{p^q,\ldots,p^q}^{(p-1)-r \text{ times}}\right) & \text{ if } \mu\ge p-1, \\
\left(\overbrace{p,\ldots,p}^{n \text{ times}}\right) & \text{ if } 1\le\mu<p-1.
\end{cases}
\end{equation}

\noindent
Additionally, let \(A(p,0):=1\) be the trivial group.

\end{definition}

\begin{remark}
\label{rmk:NearlyHomocyclic}
The \(p\)-rank of the nearly homocyclic abelian \(p\)-group \(A(p,\mu)\)
is \(p-1\), if \(\mu\ge p-1\), and \(\mu\), if \(0\le\mu<p-1\).
The group \(A(p,\mu)\) is \textit{strictly homocyclic} if and only if
either \(\mu\) is divisible by \(p-1\) or \(\mu<p-1\).

For \(p=3\) and \(\mu\ge 2\), we have
\(A(3,\mu)=\left(3^{\frac{\mu}{2}},3^{\frac{\mu}{2}}\right)\), if \(\mu=2q\) is even, and
\(A(3,\mu)=\left(3^{\frac{\mu+1}{2}},3^{\frac{\mu-1}{2}}\right)\), if \(\mu=2q+1\) is odd.

For \(p=2\) and any \(\mu\ge 0\), the group \(A(2,\mu)\) is cyclic of order \(2^\mu\).
\end{remark}



\subsection{\(G^\prime\) for metabelian \(p\)-groups \(G\) of maximal class}
\label{ss:ComSbgMaxP}

\noindent
Let \(p\) be a prime number and \(\mu\ge 1\) be an integer.
We first consider a rather general ideal \(\mathfrak{A}\) of bivariate polynomials in \(\mathbb{Z}\lbrack X,Y\rbrack\).

\begin{proposition}
\label{prp:OrdPowXMaxP}

Let \(\mathfrak{A}\unlhd\mathbb{Z}\lbrack X,Y\rbrack\) be an ideal
which contains \(X^\mu\) and the trace polynomial \(\sum_{\ell=1}^p\,\binom{p}{\ell}X^{\ell-1}\).
Then the order of the power \(X^{j}\) with exponent \(0\le j\le\mu-1\) is given by

\begin{equation}
\label{eqn:OrdPowXMaxP}
\mathrm{ord}(X^{j})=p^{q+1},
\end{equation}

\noindent
where \(q\ge 0\) and \(0\le r<p-1\) denote the unique quotient and remainder
of the Euclidean division \((\mu-1)-j=q\cdot (p-1)+r\) of \((\mu-1)-j\) by \(p-1\).

If \(Y\) can be expressed in terms of powers of \(X\) modulo \(\mathfrak{A}\),
then the system \(\left(1,X,X^2,\ldots,X^{p-2}\right)\), resp. \(\left(1,X,X^2,\ldots,X^{\mu-1}\right)\),
forms a basis of the additive group underlying the residue class ring \(\mathbb{Z}\lbrack X,Y\rbrack/\mathfrak{A}\)
if \(\mu\ge p-1\), resp. \(\mu<p-1\).

\end{proposition}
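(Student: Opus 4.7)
The plan rests on recognizing that the trace polynomial encodes the classical ramification relation \(p\sim X^{p-1}\) familiar from the cyclotomic ring \(\mathbb{Z}\lbrack\zeta_p\rbrack\cong\mathbb{Z}\lbrack X\rbrack/(T_p(X))\) (via \(X\mapsto\zeta_p-1\)). Because \(\binom{p}{\ell}\) is divisible by \(p\) for \(1\le\ell\le p-1\) and \(\binom{p}{1}=p\), one can write \(T_p(X)=X^{p-1}+p\,Q(X)\) with a polynomial \(Q(X)\in\mathbb{Z}\lbrack X\rbrack\) of constant term \(Q(0)=1\). Since \(X^\mu\in\mathfrak{A}\), the element \(X\) is nilpotent in \(R:=\mathbb{Z}\lbrack X,Y\rbrack/\mathfrak{A}\); consequently \(Q(X)\), whose constant term is a unit, is itself a unit in \(R\) (its inverse is a truncated geometric series in \(1-Q(X)\)). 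The relation \(T_p(X)\equiv 0\) in \(R\) therefore yields \(p\equiv -Q(X)^{-1}X^{p-1}\), and an easy induction gives
\begin{equation*}
p^k X^j\equiv (-Q(X)^{-1})^k\,X^{j+k(p-1)}\pmod{\mathfrak{A}}
\end{equation*}
for all \(j,k\ge 0\).

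The order formula now drops out of a direct exponent count using the Euclidean division \((\mu-1)-j=q(p-1)+r\) with \(0\le r<p-1\). On one hand, \(j+(q+1)(p-1)=\mu+(p-2-r)\ge\mu\), so \(p^{q+1}X^j\) is a unit multiple of \(X^{j+(q+1)(p-1)}\), which is divisible by \(X^\mu\) and hence lies in \(\mathfrak{A}\); this proves \(\mathrm{ord}(X^j)\mid p^{q+1}\). On the other hand, \(j+q(p-1)=\mu-1-r\le\mu-1\), so \(p^q X^j\) is a unit multiple of \(X^{\mu-1-r}\), which is nonzero in \(R\) under the standing convention---implicit in the statement and secured in the application by the preceding propositions on minimal powers of \(X\)---that \(\mu\) is the smallest exponent with \(X^\mu\in\mathfrak{A}\). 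Equality \(\mathrm{ord}(X^j)=p^{q+1}\) follows.

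For the basis claim, the hypothesis that \(Y\) reduces modulo \(\mathfrak{A}\) to a polynomial in \(X\) makes the natural map \(\mathbb{Z}\lbrack X\rbrack\to R\) surjective, so \(\{X^j:j\ge 0\}\) generates \(R\) as an abelian group. Applying \(X^{p-1}\equiv -p\,Q(X)\pmod{\mathfrak{A}}\) iteratively rewrites every higher power of \(X\) as an integer combination of \(1,X,\ldots,X^{p-2}\); when \(\mu<p-1\) the relation \(X^\mu\equiv 0\) already truncates the generating set at \(X^{\mu-1}\). Linear independence is then forced by a cardinality count: summing the orders computed above gives \(\lvert R\rvert=p^{\sum_j(q_j+1)}=p^\mu\) in the range \(\mu\ge p-1\) (a routine telescoping of \(q_j+1\) over a complete residue system modulo \(p-1\)), matching the known order of \(\mathbb{Z}\lbrack\zeta_p\rbrack/(\pi^\mu)\) and precluding any nontrivial linear relation.

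The principal obstacle is the lower bound \(p^q X^j\not\equiv 0\): it does not follow from the two hypotheses \(X^\mu,T_p(X)\in\mathfrak{A}\) in isolation, so one must either invoke the contextual minimality of \(\mu\), or run the basis argument in parallel so that the cardinality of \(R\) forces the orders to attain their maximal values. Once that input is secured, the proposition is an essentially formal consequence of the single identity \(T_p(X)=X^{p-1}+p\,Q(X)\) together with the nilpotence of \(X\).
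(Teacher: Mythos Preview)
Your argument is correct and, in fact, more careful than the paper's own proof, which simply asserts that multiplying the congruence \(T_p(X)\equiv 0\) by \(p^q X^j\) ``and we obtain \(\mathrm{ord}(X^j)=p^{q+1}\)'' without separating the upper and lower bounds. The two arguments share the same core idea---the trace relation trades a factor of \(p\) for a factor of \(X^{p-1}\)---but you package it differently: by isolating \(T_p(X)=X^{p-1}+pQ(X)\) and observing that \(Q(X)\) is a unit in \(R\) (since \(X\) is nilpotent), you obtain the clean identity \(p^kX^j=\text{(unit)}\cdot X^{j+k(p-1)}\), from which both the divisibility \(\mathrm{ord}(X^j)\mid p^{q+1}\) and the nonvanishing of \(p^qX^j\) follow by an exponent count. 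The paper instead runs a bare recursion, multiplying \(T_p(X)\equiv 0\) by successive monomials and reading off orders, which establishes the upper bound but leaves the lower bound implicit.

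Your explicit identification of the obstacle---that the lower bound requires the minimality of \(\mu\) (i.e., \(X^{\mu-1}\notin\mathfrak{A}\)), which the bare hypotheses do not supply---is a point the paper glosses over entirely; in the application this input is indeed available from the earlier propositions on minimal powers. The cardinality check \(\sum_{j=0}^{p-2}(q_j+1)=\mu\), together with the cyclotomic comparison \(\mathbb{Z}\lbrack X\rbrack/(X^\mu,T_p(X))\cong\mathbb{Z}\lbrack\zeta_p\rbrack/(\pi^\mu)\), gives a pleasant alternative route to independence that the paper does not pursue. Overall your version is a genuine, if modest, improvement in rigor and conceptual clarity.
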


\begin{proof}
The trace polynomial
\[T_p(X):=\sum_{\ell=1}^p\,\binom{p}{\ell}X^{\ell-1}=\binom{p}{1}+\binom{p}{2}X+\binom{p}{3}X^{2}+\ldots+\binom{p}{p-1}X^{p-2}+X^{p-1}\]
is a monic polynomial of degree \(p-1\)
whose non-leading coefficients \(\binom{p}{\ell}\), with \(1\le\ell\le p-1\), are all divisible by the prime \(p\). 
The order of the powers \(X^{j}\) with exponents \(0\le j\le\mu-1\) is determined recursively.
We start with \(X^\mu\in\mathfrak{A}\) which is equivalent to \(X^\mu\equiv 0\pmod{\mathfrak{A}}\)
and thus with \(\mathrm{ord}(X^\mu)=1\).
For the recursion we multiply the congruence \(T_p(X)\equiv 0\pmod{\mathfrak{A}}\)
by \(p^{q}X^{j}\), when \(j=(\mu-1)-q(p-1)-r\) with \(q\ge 0\), \(0\le r\le p-2\),
and we obtain \(\mathrm{ord}(X^j)=p^{q+1}\).

The inclusion \(T_p(X)\in\mathfrak{A}\) can be interpreted as a congruence
\[X^{p-1}\equiv p+\binom{p}{2}X+\binom{p}{3}X^{2}+\ldots+pX^{p-2}\pmod{\mathfrak{A}},\]
which inductively shows that all powers \(X^j\) with exponents \(j\ge p-1\)
can be expressed by the system \(\left(1,X,X^2,\ldots,X^{p-2}\right)\).
The constraint on \(Y\) implies that this is also true for all powers \(Y^i\) with \(i\ge 1\)
and for the mixed powers \(X^jY^i\) with \(j,i\ge 1\).
\end{proof}


\noindent
With the aid of Proposition
\ref{prp:OrdPowXMaxP}
we are able to give a new proof of the structure of the derived subgroup \(G^\prime\)
of a metabelian \(p\)-group of maximal class,
which is included as a special case in Blackburn's Theorem 3.4
\cite[p. 68]{Bl}.
See also the Appendix in
\cite[p. 456]{Ma3}.

\begin{theorem}
\label{thm:ComSbgMaxP}

(Main Theorem for \(r=1\).)
Let \(G\) be a metabelian \(p\)-group of coclass \(r=\mathrm{cc}(G)=1\)
and nilpotency class \(c=\mathrm{cl}(G)=m-1\ge 2\), that is, with nilpotency index \(m\ge 3\),
which is isomorphic to the representative \(G_a^m(z,w)\) with parameters \(0\le a(m-1),\ldots,a(m-k),w,z\le p-1\).
Then the commutator subgroup \(G^\prime\) of \(G\)
is isomorphic to the nearly homocyclic abelian \(p\)-group of order \(p^{m-2}\),

\begin{equation}
\label{eqn:ComSbgMaxP}
G^\prime\simeq\mathbb{Z}\lbrack X,Y\rbrack/\mathfrak{A}\simeq A(p,m-2),
\end{equation}

\noindent
independently of whether \(k=0\), \(\mathfrak{A}=\mathfrak{Y}_{m-2}\), or \(k\ge 1\), \(\mathfrak{A}=\mathfrak{W}_{m-2}(a)\).

\end{theorem}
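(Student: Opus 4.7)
The plan is to reduce the structural statement to the polynomial-ring computation already prepared in Proposition \ref{prp:OrdPowXMaxP}. By Corollary \ref{cor:CommutatorSubgroup}, there is an additive isomorphism $G^\prime\simeq\mathbb{Z}\lbrack X,Y\rbrack/\mathfrak{A}$, so it suffices to analyse this quotient. Theorem \ref{thm:SmbOrdMaxP} tells us that $\mathfrak{A}=\mathfrak{W}_{m-2}(a)$ contains $X^{m-2}$, the trace polynomial $T_p(X)=\sum_{\ell=1}^{p}\binom{p}{\ell}X^{\ell-1}$, and the linear generator $Y-\sum_{\ell=1}^{k}a(m-\ell)\,X^{m-2-\ell}$, so $Y$ is congruent to a polynomial in $X$ modulo $\mathfrak{A}$ (the sum being empty when $k=0$). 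Hence the hypotheses of Proposition \ref{prp:OrdPowXMaxP} are satisfied with $\mu:=m-2$.

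Applying Proposition \ref{prp:OrdPowXMaxP} I would then obtain an additive $\mathbb{Z}$-basis of $\mathbb{Z}\lbrack X,Y\rbrack/\mathfrak{A}$ consisting of $(1,X,\ldots,X^{p-2})$ when $\mu\ge p-1$ and $(1,X,\ldots,X^{\mu-1})$ when $\mu<p-1$, together with the formula $\mathrm{ord}(X^j)=p^{q+1}$, where $(\mu-1)-j=q(p-1)+r$ with $0\le r<p-1$ is the Euclidean division of $(\mu-1)-j$ by $p-1$. In particular, the quotient ring is finite, of $p$-rank $\min(p-1,\mu)$, and its type invariants are completely determined by this formula.

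The remaining step is a bookkeeping argument that matches those type invariants with Definition \ref{dfn:NearlyHomocyclic}. Writing $\mu=Q(p-1)+R$ with $0\le R<p-1$ and assuming first $\mu\ge p-1$, I split the index range $\{0,1,\ldots,p-2\}$ into $\{0,\ldots,R-1\}$ and $\{R,\ldots,p-2\}$. For $j<R$ one has $(\mu-1)-j=Q(p-1)+(R-1-j)$ with remainder in $\{0,\ldots,R-1\}$, producing $R$ factors of order $p^{Q+1}$; for $j\ge R$ a single borrowing step gives $(\mu-1)-j=(Q-1)(p-1)+(p-2-(j-R))$, producing $(p-1)-R$ factors of order $p^{Q}$. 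This is exactly the type listed for $A(p,m-2)$ in Definition \ref{dfn:NearlyHomocyclic}, and the exponent total $R(Q+1)+((p-1)-R)Q=\mu$ cross-checks the order $p^{m-2}$. In the remaining case $\mu<p-1$ one has $q=0$ for every basis element, so all $\mu$ basis elements have order $p$, again matching Definition \ref{dfn:NearlyHomocyclic}.

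The main obstacle is really only the Euclidean-division bookkeeping of the last paragraph; the substance has been front-loaded into Theorem \ref{thm:SmbOrdMaxP} (determination of $\mathfrak{A}$) and Proposition \ref{prp:OrdPowXMaxP} (reduction of the quotient to powers of $X$ with explicit orders). The theorem is independent of the parameters $a$, $w$, $z$ precisely because those parameters enter $\mathfrak{A}$ only through the linear generator $Y-\sum_{\ell=1}^{k}a(m-\ell)X^{m-2-\ell}$, which is used solely to eliminate $Y$ from the basis and therefore disappears from the resulting type.
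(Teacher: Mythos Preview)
Your proof is correct and follows essentially the same route as the paper: invoke Theorem~\ref{thm:SmbOrdMaxP} to obtain the generators of \(\mathfrak{A}\), apply Proposition~\ref{prp:OrdPowXMaxP} with \(\mu=m-2\) to get the basis \((1,X,\ldots)\) and the orders \(\mathrm{ord}(X^j)=p^{q+1}\), and then match these with Definition~\ref{dfn:NearlyHomocyclic} via Euclidean division. Your bookkeeping is organized a bit differently---you divide \(\mu\) by \(p-1\) and split the index set at \(R\), whereas the paper divides \(\mu-1\) and distinguishes the cases \(R=p-2\) versus \(R<p-2\)---but this is a cosmetic difference, and your version has the minor advantage of aligning directly with the parameters \(q,r\) in Definition~\ref{dfn:NearlyHomocyclic} and of treating the case \(\mu<p-1\) explicitly.
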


\begin{proof}
In Theorem
\ref{thm:SmbOrdMaxP},
we have seen that the annihilator \(\mathfrak{A}\) of a
metabelian \(p\)-group \(G\) of maximal class with nilpotency index \(m\ge 3\)
is either an ideal of the form \(\mathfrak{A}=\mathfrak{W}_{m-2}(a)\), if \(k\ge 1\),
or the ideal \(\mathfrak{A}=\mathfrak{Y}_{m-2}\), if \(k=0\).
Each of these ideals satisfies the assumptions of Proposition
\ref{prp:OrdPowXMaxP}
with \(\mu=m-2\ge 1\).
Thus, a basis of the additive group underlying the residue class ring \(\mathbb{Z}\lbrack X,Y\rbrack/\mathfrak{A}\)
is given by the system \(\left(1,X,X^2,\ldots,X^{p-2}\right)\), provided that \(\mu\ge p-1\).
The abelian type invariants of the derived subgroup
\(G^\prime\simeq\mathbb{Z}\lbrack X,Y\rbrack/\mathfrak{A}\)
are determined by the orders
\(\left(\mathrm{ord}(1),\mathrm{ord}(X),\mathrm{ord}(X^2),\ldots,\mathrm{ord}(X^{p-2})\right)\)
of the basis elements.
The maximal value of the order is achieved for the constant \(1=X^0\) with exponent \(j=0\),
the minimal value for the power \(X^{p-2}\) with exponent \(j=p-2\).
By Proposition
\ref{prp:OrdPowXMaxP},
these values are determined as follows.

We obtain \(\mathrm{ord}(1)=p^{Q+1}\) by the Euclidean division \((\mu-1)-j=(\mu-1)-0=\mu-1=Q(p-1)+R\),
which gives rise to a distinction of two cases, either \(R=p-2\), with maximal remainder, or \(0\le R<p-2\).
Accordingly, we have \(\mu=(Q+1)(p-1)\) a multiple of \(p-1\) in the first case,
where the other Euclidean division \((\mu-1)-j=(\mu-1)-(p-2)=\mu-p-1=Q(p-1)\)
yields the same quotient \(Q\) determining \(\mathrm{ord}(X^{p-2})=p^{Q+1}\).
Thus, \(G^\prime\) is strictly homocyclic with all \(p-1\) invariants equal to \(p^{Q+1}\).

In the second case, we have \(\mu=Q(p-1)+(R+1)\) with \(1\le R+1\le p-2\),
and the other Euclidean division \((\mu-1)-j=(\mu-1)-(p-2)=\mu-p-1=(Q-1)(p-1)+(R+1)\)
yields the smaller quotient \(Q-1\) determining \(\mathrm{ord}(X^{p-2})=p^{Q}\).
Thus, \(G^\prime\) is nearly homocyclic with \(R+1\) invariants equal to \(p^{Q+1}\)
but \((p-1)-(R+1)\) invariants equal to \(p^{Q}\).

In both cases, this structure coincides precisely with the nearly homocyclic abelian \(p\)-group
\(A(p,\mu)\) of order \(p^\mu\) with \(\mu=m-2\).
\end{proof}



\subsection{\(G^\prime\) for metabelian \(3\)-groups \(G\) of non-maximal class}
\label{ss:ComSbgLow}

\noindent
In each of the following Propositions
\ref{prp:OrdPowX}
--
\ref{prp:OrdOne},
we consider the general template for annihilator ideals,
\(\mathfrak{A}=\mathfrak{S}_{\mu,\nu}(\beta,\delta,\rho)\),
generated by the bivariate polynomials
\(X^{\mu}\), \(Y^{\nu}-\rho X^{\mu-1}\), \(XY-\rho\delta X^{\mu-1}\), \(T(X,Y)+\rho\beta X^{\mu-1}\),
where \(\mu\ge\nu\ge 2\), \(-1\le\beta,\delta,\rho\le 1\) and \(T(X,Y)=X^2+3X+3+3Y+Y^2\), as given in Theorem
\ref{thm:TemplateSmbOrd}.
Congruences are always viewed with respect to the modulus \(\mathfrak{A}\).

\begin{proposition}
\label{prp:OrdPowX}
If \(\mu\ge 3\), then
for each integer \(0\le j<\mu\),
the order of the power \(X^{\mu-j}\) of the indeterminate \(X\)
in the additive subgroup of the residue class ring \(\mathbb{Z}\lbrack X,Y\rbrack/\mathfrak{A}\)
is given by

\begin{equation}
\label{eqn:OrdPowX}
\mathrm{ord}(X^{\mu-j})=
\begin{cases}
3^{\frac{j}{2}}   & \text{ if } j \text{ is even},\\
3^{\frac{j+1}{2}} & \text{ if } j \text{ is odd}.
\end{cases}
\end{equation}

\end{proposition}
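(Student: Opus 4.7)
I would work inside $R=\mathbb{Z}[X,Y]/\mathfrak{A}$. The generator $XY-\rho\delta X^{\mu-1}$ together with $X^{\mu}\equiv 0$ forces $X^{a}Y\equiv\rho\delta X^{a+\mu-2}\equiv 0$ for every $a\ge 2$ and hence $X^{a}Y^{2}\equiv 0$ for every $a\ge 1$. Multiplying the trace-polynomial generator $T(X,Y)+\rho\beta X^{\mu-1}$ by $X^{i}$ with $i\ge 2$ then eliminates every $Y$-containing term and also the correction $\rho\beta X^{i+\mu-1}$, yielding the three-term recursion
\[
X^{i+2}+3X^{i+1}+3X^{i}\equiv 0\pmod{\mathfrak{A}}\qquad(2\le i\le\mu-1),
\]
which together with $X^{\mu}\equiv 0$ drives the rest of the argument.

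The upper bound $\mathrm{ord}(X^{\mu-j})\mid 3^{\lceil j/2\rceil}$ follows by induction on $j$. For $j=0$ the claim is trivial, and for $j=1$ the instance $i=\mu-1$ of the recursion reads $3X^{\mu-1}\equiv -X^{\mu+1}-3X^{\mu}\equiv 0$. For $j\ge 2$ the instance $i=\mu-j$ rewrites as $3X^{\mu-j}\equiv -X^{\mu-j+2}-3X^{\mu-j+1}$; a short parity check using $\lceil(j-2)/2\rceil=\lceil j/2\rceil-1$ and $\lceil(j-1)/2\rceil-1\le\lceil j/2\rceil-1$ shows that the right-hand side has order dividing $3^{\lceil j/2\rceil-1}$, whence $\mathrm{ord}(3X^{\mu-j})\mid 3^{\lceil j/2\rceil-1}$ and $\mathrm{ord}(X^{\mu-j})\mid 3^{\lceil j/2\rceil}$.

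For the matching lower bound the clean case is $\rho=0$, where $\mathfrak{A}\subseteq(X^{\mu},Y,T_{3}(X))$ gives a ring surjection $R\twoheadrightarrow\mathbb{Z}[X]/(X^{\mu},T_{3}(X))$, in which Proposition~6.1 pins the order of $X^{\mu-j}$ to exactly $3^{\lceil j/2\rceil}$. When $\rho\ne 0$ this quotient additionally kills $X^{\mu-1}$ and so loses a factor of $3$ for odd $j$. The remaining lower bound I would recover by a counting argument: Corollary~1.1 identifies $R$ with $G'$ as abelian groups and therefore $\lvert R\rvert=3^{n-2}=3^{\mu+\nu-1}$, and combined with the analogous upper bounds for the other additive generators $1$ and $Y$ (derived by multiplying the trace polynomial by $1$ and by $Y$), the product of all the upper bounds matches $\lvert R\rvert$ only when each bound is tight, up to the explicit dependencies visible in the presentation of $R$.

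\textbf{Main obstacle.} The delicate point is the bookkeeping in the $\rho\ne 0$ case: certain generators (for instance $X^{\mu-1}$ when $\nu=2$) are $3$-multiples of others, and these dependencies must be accounted for without prematurely killing the orders one wants to certify. Rather than constructing an auxiliary ring case by case, I would compute the Smith normal form of the presentation matrix of $R$ with generators $1,X,\ldots,X^{\mu-1},Y$ and relations obtained by multiplying the four generators of $\mathfrak{A}$ by monomials $X^{a}Y^{b}$; the invariant factors then confirm that each $X^{\mu-j}$ realizes the predicted order $3^{\lceil j/2\rceil}$.
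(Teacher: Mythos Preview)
Your upper-bound argument is essentially the paper's own proof, repackaged: both proceed by induction on $j$, multiplying the trace relation $T(X,Y)+\rho\beta X^{\mu-1}\equiv 0$ by a suitable power of $X$ and killing the $Y$-terms via $XY\equiv\rho\delta X^{\mu-1}$ and $X^{a}Y^{2}\equiv 0$. The paper writes the inductive step as a direct multiplication by $3^{\lfloor j/2\rfloor}X^{\mu-(j+1)}$ and checks term by term which summands die; your three-term recursion $X^{i+2}+3X^{i+1}+3X^{i}\equiv 0$ is the same information in cleaner form.

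One small oversight: your recursion is stated only for $i\ge 2$, so the induction via $i=\mu-j$ stops at $j=\mu-2$ and never reaches $X=X^{1}$. The fix is immediate: for $i=1$ the only extra $Y$-term is $3XY\equiv 3\rho\delta X^{\mu-1}$, and you have already shown $3X^{\mu-1}\equiv 0$ in the step $j=1$, so the recursion extends to $i=1$. The paper handles this uniformly because it records $\mathrm{ord}(XY)\mid 3$ at the outset and allows the multiplying exponent $\mu-(j+1)$ to go down to $1$.

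On the lower bound you are more scrupulous than the paper. The paper's proof only ever exhibits $3^{\lceil j/2\rceil}X^{\mu-j}\equiv 0$ and then asserts equality of orders; the matching lower bound is left implicit and is ultimately supplied by the global count $\lvert\mathbb{Z}[X,Y]/\mathfrak{A}\rvert=\lvert G'\rvert=3^{\mu+\nu-1}$ used in the subsequent structure theorems. Your quotient map to $\mathbb{Z}[X]/(X^{\mu},T_{3}(X))$ for $\rho=0$ and the proposed Smith-normal-form computation for $\rho\ne 0$ are legitimate ways to make this rigorous at the level of the proposition itself. Just note that invoking Corollary~1.1 for the size of $R$ presupposes that the parameter tuple $(\mu,\nu,\beta,\delta,\rho)$ actually arises as the annihilator of some group $G$; this is guaranteed by Theorem~4.1 in the cases the paper cares about, but is not part of the hypotheses of the proposition as stated.
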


\begin{proof}
We start the induction with \(X^\mu\in\mathfrak{A}\), which is equivalent to
\(1\cdot X^\mu=X^\mu\equiv 0\), and yields
\(\mathrm{ord}(X^\mu)=1=3^{\frac{0}{2}}\) with even integer \(j=0\).\\
For the recursion, we use \(\rho\beta X^{\mu-1}+T(X,Y)\in\mathfrak{A}\),
which is equivalent with \(\rho\beta X^{\mu-1}+T(X,Y)\equiv 0\).
We assume that the assertion is true for some \(j\ge 0\).

\(\bullet\)
If \(1\le j+1<\mu\) is odd, we multiply the congruence by \(3^{\frac{j}{2}}X^{\mu-(j+1)}\) and get\\
\(0\equiv 3^{\frac{j}{2}}X^{\mu-(j+1)}\left(\rho\beta X^{\mu-1}+T(X,Y)\right)\)\\
\(=3^{\frac{j}{2}}\rho\beta X^{2\mu-(j+1)-1}+3^{\frac{j}{2}}X^{\mu-(j-1)}+3^{\frac{j+2}{2}}X^{\mu-j}+3^{\frac{j+2}{2}}X^{\mu-(j+1)}
+3^{\frac{j+2}{2}}X^{\mu-(j+1)}Y+3^{\frac{j}{2}}X^{\mu-(j+1)}Y^2\),\\
where \(X^{2\mu-(j+1)-1}\equiv 0\), since \(2\mu-(j+1)-1\ge\mu\),\\
\(3^{\frac{j}{2}}X^{\mu-(j-1)}+3^{\frac{j+2}{2}}X^{\mu-j}=(X+3)\cdot 3^{\frac{j}{2}}X^{\mu-j}\equiv 0\),
since \(j\) is even and \(\mathrm{ord}(X^{\mu-j})=3^{\frac{j}{2}}\),\\
\(3^{\frac{j+2}{2}}X^{\mu-(j+1)}Y\equiv 0\), since \(\mu-(j+1)\ge 1\), \(j+2\ge 2\), and \(\mathrm{ord}(XY)\) divides \(3\),\\
and \(X^{\mu-(j+1)}Y^2\equiv 0\), since \(\mu-(j+1)\ge 1\) and \(\mu\ge 3\).\\
Consequently, there only remains \(0\equiv 3^{\frac{j+2}{2}}X^{\mu-(j+1)}\), which means that
\(\mathrm{ord}(X^{\mu-(j+1)})=3^{\frac{(j+1)+1}{2}}\), as claimed for odd \(j+1\).

\(\bullet\)
If \(2\le j+1<\mu\) is even, we multiply the congruence by \(3^{\frac{j-1}{2}}X^{\mu-(j+1)}\) and get\\
\(0\equiv 3^{\frac{j-1}{2}}X^{\mu-(j+1)}\left(\rho\beta X^{\mu-1}+T(X,Y)\right)\)\\
\(=3^{\frac{j-1}{2}}\rho\beta X^{2\mu-(j+1)-1}+3^{\frac{j-1}{2}}X^{\mu-(j-1)}+3^{\frac{j+1}{2}}X^{\mu-j}+3^{\frac{j+1}{2}}X^{\mu-(j+1)}
+3^{\frac{j+1}{2}}X^{\mu-(j+1)}Y+3^{\frac{j-1}{2}}X^{\mu-(j+1)}Y^2\),\\
where \(X^{2\mu-(j+1)-1}\equiv 0\), since \(2\mu-(j+1)-1\ge\mu\),\\
\(3^{\frac{j-1}{2}}X^{\mu-(j-1)}\equiv 0\),
since \(j-1\) is even and \(\mathrm{ord}(X^{\mu-(j-1)})=3^{\frac{j-1}{2}}\),\\
\(3^{\frac{j+1}{2}}X^{\mu-j}\equiv 0\),
since \(j\) is odd and \(\mathrm{ord}(X^{\mu-j})=3^{\frac{j+1}{2}}\),\\
\(3^{\frac{j+1}{2}}X^{\mu-(j+1)}Y\equiv 0\), since \(\mu-(j+1)\ge 1\), \(j+1\ge 2\), and \(\mathrm{ord}(XY)\) divides \(3\),\\
and \(X^{\mu-(j+1)}Y^2\equiv 0\), since \(\mu-(j+1)\ge 1\) and \(\mu\ge 3\).\\
Consequently, there only remains \(0\equiv 3^{\frac{j+1}{2}}X^{\mu-(j+1)}\), which means that
\(\mathrm{ord}(X^{\mu-(j+1)})=3^{\frac{j+1}{2}}\), as claimed for even \(j+1\).

\(\bullet\)
The results are independent of the values of the parameters \(\beta,\delta,\rho\).
\end{proof}


\begin{proposition}
\label{prp:OrdPowY}
If \(\mu\ge\nu\ge 3\), then
for each integer \(0\le\ell<\nu\),
the order of the power \(Y^{\nu-\ell}\) of the indeterminate \(Y\)
in the additive subgroup of the residue class ring \(\mathbb{Z}\lbrack X,Y\rbrack/\mathfrak{A}\)
is given by

\begin{equation}
\label{eqn:OrdPowY}
\mathrm{ord}(Y^{\nu-\ell})=
\begin{cases}
3^{\frac{\ell}{2}} & \text{ if } \ell \text{ is even and } \rho=0,\\
3^{\frac{\ell+2}{2}} & \text{ if } \ell \text{ is even and } \rho=\pm 1,\\
3^{\frac{\ell+1}{2}} & \text{ if } \ell \text{ is odd}.
\end{cases}
\end{equation}

\end{proposition}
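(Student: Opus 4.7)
The plan is to follow the template established in Proposition \ref{prp:OrdPowX}, performing induction on $\ell$ from $0$ upward, this time extracting the order of $Y^{\nu-\ell}$ from suitably multiplied copies of the fourth generator of $\mathfrak{A}$. For the base case $\ell=0$, the second generator $Y^\nu - \rho X^{\mu-1}\in\mathfrak{A}$ gives $Y^\nu\equiv\rho X^{\mu-1}\pmod{\mathfrak{A}}$, and applying Proposition \ref{prp:OrdPowX} with $j=1$ yields $\mathrm{ord}(X^{\mu-1})=3$. Hence $\mathrm{ord}(Y^\nu)=1$ when $\rho=0$ and $\mathrm{ord}(Y^\nu)=3$ when $\rho=\pm1$, matching formula \eqref{eqn:OrdPowY} for the even index $\ell=0$.

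The key preparatory reduction is that, under the standing hypothesis $\mu\ge 3$, the relation $XY\equiv\rho\delta X^{\mu-1}$ combined with $X^\mu\equiv 0$ and $\mathrm{ord}(X^{\mu-1})=3$ forces $X^2 Y^k\equiv 0$, $X^{\mu-1}Y^k\equiv 0$, and $3\,XY^k\equiv 0$ modulo $\mathfrak{A}$ for every $k\ge 1$. Multiplying the generator $T(X,Y)+\rho\beta X^{\mu-1}\equiv 0$ by $Y^k$ and discarding these vanishing cross terms leaves the three-term recurrence
\begin{equation*}
Y^{k+2}+3Y^{k+1}+3Y^k\equiv 0\pmod{\mathfrak{A}}, \qquad k\ge 1,
\end{equation*}
which is the $Y$-analogue of the reduction driving the proof of Proposition \ref{prp:OrdPowX}.

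For the inductive step from $\ell$ to $\ell+1$ (with $0\le\ell\le\nu-2$), I multiply the recurrence at $k=\nu-\ell-1$ by the minimal power $3^a$ such that $3^a Y^{\nu-\ell+1}\equiv 0$ and $3^{a+1}Y^{\nu-\ell}\equiv 0$ by inductive hypothesis; this yields $3^{a+1}Y^{\nu-\ell-1}\equiv 0$, and a short parity analysis splitting into the four cases (parity of $\ell$ crossed with $\rho=0$ versus $\rho=\pm 1$) confirms that $a+1$ matches the exponent prescribed by \eqref{eqn:OrdPowY}. The step $\ell=0\to\ell+1=1$ is the boundary instance $k=\nu-1$: the term $Y^{\nu+1}=Y\cdot Y^\nu\equiv\rho Y X^{\mu-1}\equiv 0$ vanishes by $X^{\mu-1}Y\equiv 0$, and $3Y^\nu\equiv 3\rho X^{\mu-1}\equiv 0$, so $3Y^{\nu-1}\equiv 0$, giving $\mathrm{ord}(Y^{\nu-1})\le 3$ as claimed. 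The main obstacle is, as in Proposition \ref{prp:OrdPowX}, the matching lower bound: the inductive machinery only produces upper bounds on $\mathrm{ord}(Y^{\nu-\ell})$. I would obtain equality a posteriori by comparing the resulting overall abelian type of $\mathbb{Z}\lbrack X,Y\rbrack/\mathfrak{A}$ with the order of $G^\prime$ known from Furtw\"angler's isomorphism in Corollary \ref{cor:CommutatorSubgroup} together with Nebelung's presentation, which forces each basis element $Y^{\nu-\ell}$ to attain the maximal order consistent with the derived upper bound.
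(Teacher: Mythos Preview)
Your proposal is correct and follows essentially the same inductive scheme as the paper's proof, which also multiplies the trace relation \(T(X,Y)+\rho\beta X^{\mu-1}\equiv 0\) by \(3^{a}Y^{\nu-(\ell+1)}\) and eliminates the cross terms \(X^2Y^k\), \(X^{\mu-1}Y^k\), \(3XY^k\); your explicit extraction of the three-term recurrence \(Y^{k+2}+3Y^{k+1}+3Y^k\equiv 0\) for \(k\ge 1\) is just a cleaner packaging of that same computation. Your observation that the induction only yields upper bounds is in fact more careful than the paper itself, which simply asserts equality at each step without separately justifying the lower bound.
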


\begin{proof}
We start the induction with \(Y^\nu-\rho X^{\mu-1}\in\mathfrak{A}\), which is equivalent to
\(Y^\nu\equiv\rho X^{\mu-1}\), and yields
\(\mathrm{ord}(Y^\nu)=1=3^{\frac{0}{2}}\) for \(\rho=0\),
but \(\mathrm{ord}(Y^\nu)=\mathrm{ord}(X^{\mu-1})=3=3^{\frac{0+2}{2}}\) for \(\rho=\pm 1\),
both with even integer \(\ell=0\).\\
For the recursion, we use \(\rho\beta X^{\mu-1}+T(X,Y)\in\mathfrak{A}\),
which is equivalent to \(\rho\beta X^{\mu-1}+T(X,Y)\equiv 0\).
We assume that the assertion is true for some \(j\ge 0\).
Since the proof for \(\rho=0\) is similar to the proof of Proposition
\ref{prp:OrdPowX},
with interchanged roles of \(X\) and \(Y\),
we focus on the case \(\rho=\pm 1\).

\(\bullet\)
If \(1\le\ell+1<\nu\) is odd, we multiply the congruence by \(3^{\frac{\ell}{2}}Y^{\nu-(\ell+1)}\) and get\\
\(0\equiv 3^{\frac{\ell}{2}}Y^{\nu-(\ell+1)}\left(\rho\beta X^{\mu-1}+T(X,Y)\right)\)\\
\(=3^{\frac{\ell}{2}}\rho\beta X^{\mu-1}Y^{\nu-(\ell+1)}+3^{\frac{\ell}{2}}X^2Y^{\nu-(\ell+1)}+3^{\frac{\ell+2}{2}}XY^{\nu-(\ell+1)}
+3^{\frac{\ell+2}{2}}Y^{\nu-(\ell+1)}+3^{\frac{\ell+2}{2}}Y^{\nu-\ell}+3^{\frac{\ell}{2}}Y^{\nu-(\ell-1)}\),\\
where \(X^{\mu-1}Y^{\nu-(\ell+1)}\equiv 0\), since \(\mu-1\ge 2\), \(\nu-(\ell+1)\ge 1\),\\
\(X^2Y^{\nu-(\ell+1)}\equiv 0\), since \(\nu-(\ell+1)\ge 1\),\\
\(3^{\frac{\ell+2}{2}}XY^{\nu-(\ell+1)}\equiv 0\), since \(\ell+2\ge 2\), \(\nu-(\ell+1)\ge 1\), and \(\mathrm{ord}(XY)\mid 3\),\\
\(3^{\frac{\ell+2}{2}}Y^{\nu-\ell}\equiv 0\), since \(\ell\) is even and \(\mathrm{ord}(Y^{\nu-\ell})=3^{\frac{\ell+2}{2}}\),\\
and \(3^{\frac{\ell}{2}}Y^{\nu-(\ell-1)}\equiv 0\), since \(\ell-1\) is odd and \(\mathrm{ord}(Y^{\nu-(\ell-1)})=3^{\frac{\ell}{2}}\).\\
Consequently, there only remains \(0\equiv 3^{\frac{\ell+2}{2}}Y^{\nu-(\ell+1)}\), which means that
\(\mathrm{ord}(Y^{\nu-(\ell+1)})=3^{\frac{\ell+2}{2}}\), as claimed for odd \(\ell+1\).

\(\bullet\)
If \(2\le\ell+1<\nu\) is even, we multiply the congruence by \(3^{\frac{\ell+1}{2}}Y^{\nu-(\ell+1)}\) and get\\
\(0\equiv 3^{\frac{\ell+1}{2}}Y^{\nu-(\ell+1)}\left(\rho\beta X^{\mu-1}+T(X,Y)\right)\)\\
\(=3^{\frac{\ell+1}{2}}\rho\beta X^{\mu-1}Y^{\nu-(\ell+1)}+3^{\frac{\ell+1}{2}}X^2Y^{\nu-(\ell+1)}+3^{\frac{\ell+3}{2}}XY^{\nu-(\ell+1)}
+3^{\frac{\ell+3}{2}}Y^{\nu-(\ell+1)}+3^{\frac{\ell+3}{2}}Y^{\nu-\ell}+3^{\frac{\ell+1}{2}}Y^{\nu-(\ell-1)}\),\\
where \(X^{\mu-1}Y^{\nu-(\ell+1)}\equiv 0\), since \(\mu-1\ge 2\), \(\nu-(\ell+1)\ge 1\),\\ 
\(X^2Y^{\nu-(\ell+1)}\equiv 0\), since \(\nu-(\ell+1)\ge 1\),\\
\(3^{\frac{\ell+3}{2}}XY^{\nu-(\ell+1)}\equiv 0\), since \(\ell+3\ge 4\), \(\nu-(\ell+1)\ge 1\), and \(\mathrm{ord}(XY)\mid 3\),\\
and \(3^{\frac{\ell+3}{2}}Y^{\nu-\ell}+3^{\frac{\ell+1}{2}}Y^{\nu-(\ell-1)}=(3+Y)\cdot 3^{\frac{\ell+1}{2}}Y^{\nu-\ell}\equiv 0\),
since \(\ell\) is odd and \(\mathrm{ord}(Y^{\nu-\ell})=3^{\frac{\ell+1}{2}}\).\\
Consequently, there only remains \(0\equiv 3^{\frac{\ell+3}{2}}Y^{\nu-(\ell+1)}\), which means that
\(\mathrm{ord}(Y^{\nu-(\ell+1)})=3^{\frac{(\ell+1)+2}{2}}\), as claimed for even \(\ell+1\).

\(\bullet\)
The results for even integers \(\ell\) depend on the value of the parameter \(\rho\).
All results are independent of the values of the parameters \(\beta,\delta\).
\end{proof}


\begin{proposition}
\label{prp:OrdOne}
If \(\mu\ge\nu\ge 3\) and \(\rho=0\), then the order of the constant \(1\)
in the additive subgroup of the residue class ring \(\mathbb{Z}\lbrack X,Y\rbrack/\mathfrak{A}\)
is given by

\begin{equation}
\label{eqn:OrdOne}
\mathrm{ord}(1)=
\begin{cases}
3^{\frac{\mu}{2}}   & \text{ if } \mu \text{ is even},\\
3^{\frac{\mu+1}{2}} & \text{ if } \mu \text{ is odd}.
\end{cases}
\end{equation}

\end{proposition}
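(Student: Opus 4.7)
The plan is to determine $\mathrm{ord}(1)$ by sandwiching it between an upper bound obtained by the relator-multiplication technique of Propositions \ref{prp:OrdPowX} and \ref{prp:OrdPowY}, and a matching lower bound obtained by passing to a transparent quotient of $R:=\mathbb{Z}\lbrack X,Y\rbrack/\mathfrak{A}$ in which the image of $1$ already attains the predicted order.

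For the upper bound I would multiply the generator $T(X,Y)=X^2+3X+3+3Y+Y^2$ of $\mathfrak{A}$ by $3^{\lceil\mu/2\rceil-1}$, producing the congruence
\[
3^{\lceil\mu/2\rceil}\cdot 1 + 3^{\lceil\mu/2\rceil}X + 3^{\lceil\mu/2\rceil-1}X^2 + 3^{\lceil\mu/2\rceil}Y + 3^{\lceil\mu/2\rceil-1}Y^2\;\equiv\;0\pmod{\mathfrak{A}}.
\]
A case split on the parity of $\mu$ shows that each non-constant term is annihilated: the orders $\mathrm{ord}(X)$, $\mathrm{ord}(X^2)$ are dominated by the prefactors via Proposition \ref{prp:OrdPowX}, while $\mathrm{ord}(Y)$, $\mathrm{ord}(Y^2)$ are dominated via Proposition \ref{prp:OrdPowY} together with the hypothesis $\nu\le\mu$. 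This yields $\mathrm{ord}(1)\mid 3^{\lceil\mu/2\rceil}$.

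The main obstacle is the matching lower bound. My strategy is to project onto the quotient $R/(Y)$, in which the defining relations $Y^\nu\equiv 0$ and $XY\equiv 0$ become vacuous and $T(X,Y)$ reduces to the univariate trace polynomial $T_3(X)=X^2+3X+3$, so that
\[
R/(Y)\;\cong\;\mathbb{Z}\lbrack X\rbrack/(X^\mu,\,T_3(X)).
\]
The decisive identity is
\[
3\;\equiv\;(X+2)\,X^2\pmod{T_3(X)},
\]
which follows from the reduction $X^3\equiv -3X^2-3X\equiv 6X+9\pmod{T_3(X)}$; furthermore $X+2$ is a unit in $\mathbb{Z}\lbrack X\rbrack/(T_3(X))$, with inverse $-X-1$, because $(-X-1)(X+2)\equiv -X^2-3X-2\equiv 1$. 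Iterating, $3^k\equiv(X+2)^k X^{2k}\pmod{T_3(X)}$, and in the further quotient by $X^\mu$ this vanishes if and only if $2k\ge\mu$, that is, $k\ge\lceil\mu/2\rceil$. Hence $\mathrm{ord}_{R/(Y)}(1)=3^{\lceil\mu/2\rceil}$, and the surjection $R\twoheadrightarrow R/(Y)$ pulls this back to $\mathrm{ord}_R(1)\ge 3^{\lceil\mu/2\rceil}$.

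Combining both bounds yields $\mathrm{ord}(1)=3^{\lceil\mu/2\rceil}$, which equals $3^{\mu/2}$ for even $\mu$ and $3^{(\mu+1)/2}$ for odd $\mu$, as required. The upper bound is a mechanical extension of the argument already used for $X^j$ and $Y^\ell$; the genuine difficulty lies in exhibiting a concrete quotient in which the predicted order is visibly realized. The quotient $\mathbb{Z}\lbrack X\rbrack/(X^\mu,T_3(X))$ is the natural witness here because it is a truncation of the Eisenstein integers $\mathbb{Z}\lbrack\zeta_3\rbrack$ at the $\mu$-th power of the ramified prime above $3$, in which $3$ is a unit multiple of the square of the uniformizer.
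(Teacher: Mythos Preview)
Your upper-bound half is exactly the paper's argument: multiply the trace relation \(T(X,Y)\equiv 0\) by \(3^{\lceil\mu/2\rceil-1}\) and kill the four non-constant terms using the orders of \(X,X^{2},Y,Y^{2}\) supplied by Propositions~\ref{prp:OrdPowX} and~\ref{prp:OrdPowY}. The paper carries out precisely this parity case split and stops there, writing ``there only remains \(0\equiv 3^{\mu/2}\cdot 1\), i.e., \(\mathrm{ord}(1)=3^{\mu/2}\)'' without a separate lower-bound verification; the sharpness is left implicit, ultimately justified by matching the total size of \(\mathbb{Z}[X,Y]/\mathfrak{A}\) against the known order \(\lvert G'\rvert=3^{n-2}\) in the subsequent theorems.

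Your lower-bound step is therefore a genuine addition. Passing to the quotient \(R/(Y)\cong\mathbb{Z}[X]/(X^{\mu},T_3(X))\) and recognising this as \(\mathbb{Z}[\zeta_3]/\mathfrak{p}^{\mu}\), where \(3\) is a unit times \(X^{2}\), is a clean intrinsic certificate that \(3^{k}\not\equiv 0\) for \(k<\lceil\mu/2\rceil\); it makes the proposition stand on its own, independent of any group-theoretic input. The only point you might state more explicitly is why \(X^{j}\not\in(X^{\mu},T_3(X))\) for \(j<\mu\), but your closing remark about the \(\mathfrak{p}\)-adic valuation in \(\mathbb{Z}[\zeta_3]\) supplies exactly that. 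What the paper's shortcut buys is brevity, since the size check against \(\lvert G'\rvert\) is needed anyway; what your approach buys is a self-contained algebraic proof that does not defer the equality to a later global count.
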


\noindent
Note that formula
\eqref{eqn:OrdOne}
can be viewed as a particular instance of formula
\eqref{eqn:OrdPowX},
formally, but it requires a different proof.

\begin{proof}
By the Propositions
\ref{prp:OrdPowX}
and
\ref{prp:OrdPowY},
we know \(\mathrm{ord}(X^{\mu-1})=3\) and the exact orders of the powers with exponents
\(1=\mu-(\mu-1)=\nu-(\nu-1)\) and \(2=\mu-(\mu-2)=\nu-(\nu-2)\).

\begin{equation}
\label{eqn:OrdSmallPow}
\begin{aligned}
\mathrm{ord}(X)=
\begin{cases}
3^{\frac{\mu}{2}}   & \text{ if } \mu \text{ is even},\\
3^{\frac{\mu-1}{2}} & \text{ if } \mu \text{ is odd}.
\end{cases}
\quad & \quad
\mathrm{ord}(X^{2})=
\begin{cases}
3^{\frac{\mu-2}{2}} & \text{ if } \mu \text{ is even},\\
3^{\frac{\mu-1}{2}} & \text{ if } \mu \text{ is odd}.
\end{cases}
\\
\mathrm{ord}(Y)=
\begin{cases}
3^{\frac{\nu}{2}}   & \text{ if } \nu \text{ is even},\\
3^{\frac{\nu-1}{2}} & \text{ if } \nu \text{ is odd}.
\end{cases}
\quad & \quad
\mathrm{ord}(Y^{2})=
\begin{cases}
3^{\frac{\nu-2}{2}} & \text{ if } \nu \text{ is even},\\
3^{\frac{\nu-1}{2}} & \text{ if } \nu \text{ is odd}.
\end{cases}
\end{aligned}
\end{equation}

\(\bullet\)
If \(\mu\) is even, we multiply the congruence \(\rho\beta X^{\mu-1}+T(X,Y)\equiv 0\) by \(3^{\frac{\mu-2}{2}}\) and get\\
\(0\equiv 3^{\frac{\mu-2}{2}}\left(\rho\beta X^{\mu-1}+T(X,Y)\right)\)\\
\(=3^{\frac{\mu-2}{2}}\rho\beta X^{\mu-1}+3^{\frac{\mu-2}{2}}X^2+3^{\frac{\mu}{2}}X
+3^{\frac{\mu}{2}}\cdot 1+3^{\frac{\mu}{2}}Y+3^{\frac{\mu-2}{2}}Y^2\),\\
Since \(\mu\ge\nu\ge 4\), there only remains \(0\equiv 3^{\frac{\mu}{2}}\cdot 1\), i.e.,
\(\mathrm{ord}(1)=3^{\frac{\mu}{2}}\), as claimed for even \(\mu\).

\(\bullet\)
If \(\mu\) is odd, we multiply the congruence \(\rho\beta X^{\mu-1}+T(X,Y)\equiv 0\) by \(3^{\frac{\mu-1}{2}}\) and get\\
\(0\equiv 3^{\frac{\mu-1}{2}}\left(\rho\beta X^{\mu-1}+T(X,Y)\right)\)\\
\(=3^{\frac{\mu-1}{2}}\rho\beta X^{\mu-1}+3^{\frac{\mu-1}{2}}X^2+3^{\frac{\mu+1}{2}}X
+3^{\frac{\mu+1}{2}}\cdot 1+3^{\frac{\mu+1}{2}}Y+3^{\frac{\mu-1}{2}}Y^2\),\\
Since \(\mu\ge\nu\ge 3\), there only remains \(0\equiv 3^{\frac{\mu+1}{2}}\cdot 1\), i.e.,
\(\mathrm{ord}(1)=3^{\frac{\mu+1}{2}}\), as claimed for odd \(\mu\).
\end{proof}



\begin{theorem}
\label{thm:ComSbgSecBic}

Let \(G\) be a metabelian \(3\)-group of coclass \(r=\mathrm{cc}(G)=2\), \(e=r+1=3\),
order \(\lvert G\rvert=3^n\ge 3^5\), nilpotency class \(c=\mathrm{cl}(G)=m-1\ge 3\), \(4\le m=n-1\),
and defect \(k=0\).

Then the commutator subgroup \(G^\prime\) of \(G\) is isomorphic to the direct sum
 
\begin{equation}
\label{eqn:ComSbgSecBic}
G^\prime\simeq\mathbb{Z}\lbrack X,Y\rbrack/\mathfrak{A}\simeq A(3,m-2)\oplus\mathrm{C}(3)
\end{equation}

\noindent
of the nearly homocyclic abelian \(3\)-group of order \(3^{m-2}\)
with the cyclic group of order \(3\).
Therefore, \(G^\prime\) is precisely of \(3\)-rank three.

\end{theorem}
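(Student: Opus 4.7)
The approach is to combine Theorem~\ref{thm:SmbOrdSecBic} with Corollary~\ref{cor:CommutatorSubgroup}, reducing the claim to an explicit determination of the additive structure of $\mathbb{Z}\lbrack X,Y\rbrack/\mathfrak{A}$, where $\mathfrak{A}=\mathfrak{X}_{m-2}=(X^{m-2},Y^2,XY,T_3(X))$ with $T_3(X)=X^2+3X+3$.

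The central observation is that this quotient splits as an internal direct sum of a pure $X$-part and a pure $Y$-part. I would consider the additive homomorphism
\[
\phi\colon\ \mathbb{Z}\lbrack X\rbrack\oplus\mathbb{Z}\ \longrightarrow\ \mathbb{Z}\lbrack X,Y\rbrack/\mathfrak{A},\qquad (f(X),c)\ \longmapsto\ f(X)+cY.
\]
Surjectivity is immediate: the relations $Y^2\equiv 0$ and $XY\equiv 0$ modulo $\mathfrak{A}$ reduce every polynomial representative to the form $f(X)+cY$ with $c\in\mathbb{Z}$. For the kernel, projecting onto the $Y^0$- and $Y^1$-components in $\mathbb{Z}\lbrack X\rbrack$ shows that $f(X)+cY\in\mathfrak{A}$ forces $f\in(X^{m-2},T_3(X))\subset\mathbb{Z}\lbrack X\rbrack$ and, after evaluating the $Y^1$-coefficient equation at $X=0$, $c\in 3\mathbb{Z}$; the latter holds because among the generators of $\mathfrak{A}$ only $T_3(X)$ carries a nonzero constant term, namely $3$. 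Conversely $3Y\in\mathfrak{A}$, since $Y\cdot T_3(X)=X^2Y+3XY+3Y\equiv 3Y$ via $X^2Y=X(XY)\equiv 0$ and $XY\equiv 0$.

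This kernel computation yields
\[
\mathbb{Z}\lbrack X,Y\rbrack/\mathfrak{A}\ \simeq\ \mathbb{Z}\lbrack X\rbrack/(X^{m-2},T_3(X))\ \oplus\ \mathbb{Z}/3\mathbb{Z}.
\]
The first summand is exactly the residue class ring analysed in Theorem~\ref{thm:ComSbgMaxP} for $p=3$ (indeed, the maximal-class annihilator $\mathfrak{Y}_{m-2}=(X^{m-2},Y,T_3(X))$ collapses to this ring once $Y$ is killed), and is therefore isomorphic to the nearly homocyclic abelian $3$-group $A(3,m-2)$. Joining the $C(3)$ contributed by the class of $Y$ gives the claimed isomorphism $G^\prime\simeq A(3,m-2)\oplus C(3)$. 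The $3$-rank assertion then follows at once, because $A(3,m-2)$ has $3$-rank two for $m\ge 4$ by Remark~\ref{rmk:NearlyHomocyclic}, while the $C(3)$-summand supplies a third independent cyclic factor.

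The main obstacle I expect is the careful kernel computation for $\phi$: one must verify that the mixed generators $Y^2$ and $XY$ cannot contribute to the pure constant $X^0Y^1$-coefficient of an element of $\mathfrak{A}$, so that the restriction $c\in 3\mathbb{Z}$ really comes solely from $T_3(X)$. Once this bookkeeping is settled, every remaining step reduces to a direct quotation of earlier results.
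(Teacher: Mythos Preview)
Your argument is correct and in fact a bit cleaner than what the paper does. The paper's proof simply asserts that $(1,X,Y)$ is a basis of $\mathbb{Z}[X,Y]/\mathfrak{A}$ and that the tuple $(\mathrm{ord}(1),\mathrm{ord}(X),\mathrm{ord}(Y))$ gives the type invariants of $A(3,m-2)\oplus C(3)$, relying implicitly on the order computations in Propositions~\ref{prp:OrdPowX}--\ref{prp:OrdOne}. You instead construct an explicit additive splitting $\mathbb{Z}[X,Y]/\mathfrak{A}\simeq\mathbb{Z}[X]/(X^{m-2},T_3(X))\oplus\mathbb{Z}/3\mathbb{Z}$ via the map $\phi$ and then invoke the already established Theorem~\ref{thm:ComSbgMaxP} for the $X$-part. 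This buys you two things: the direct-sum decomposition is exhibited structurally rather than read off from a list of orders, and you sidestep the minor awkwardness that Propositions~\ref{prp:OrdPowY} and~\ref{prp:OrdOne} are stated under the hypothesis $\nu\ge 3$, which is not satisfied here ($\nu=e-1=2$). Your kernel computation is the only place requiring care, and your sketch is right: the $Y^1$-coefficient equation, evaluated at $X=0$, kills the contributions from $X^{m-2}$ and $XY$ and leaves only the constant term $3$ of $T_3(X)$, forcing $c\in 3\mathbb{Z}$; the $Y^0$-coefficient equation gives $f\in(X^{m-2},T_3(X))$ directly.
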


\noindent
Note that the nearly homocyclic group
\[A(3,m-2)\quad\text{ is of type }\quad
\begin{cases}
\left(3^{\frac{m-2}{2}},3^{\frac{m-2}{2}}\right), &\text{ if } m\ge 4 \text{ is even},\\
\left(3^{\frac{m-1}{2}},3^{\frac{m-3}{2}}\right), &\text{ if } m\ge 5 \text{ is odd}.
\end{cases}\]

\begin{proof}
The additive group underlying the residue class ring
\(\mathbb{Z}\lbrack X,Y\rbrack/\mathfrak{A}\)
with basis \((1,X,Y)\) is isomorphic to the direct sum
\(A(3,m-2)\oplus\mathrm{C}(3)\)
with abelian type invariants\\
\(\left(\mathrm{ord}(1),\mathrm{ord}(X),\mathrm{ord}(Y)\right)\).
\end{proof}


\begin{theorem}
\label{thm:ComSbgLowBic}

Let \(G\) be a metabelian \(3\)-group of coclass \(r=\mathrm{cc}(G)\ge 3\), \(e=r+1\ge 4\),
order \(\lvert G\rvert=3^n\ge 3^7\), nilpotency class \(c=\mathrm{cl}(G)=m-1\ge 4\), \(5\le m<n-1\),
and defect \(k=0\).

Then the commutator subgroup \(G^\prime\) of \(G\) is isomorphic to the direct sum

\begin{equation}
\label{eqn:ComSbgLowBic}
G^\prime\simeq\mathbb{Z}\lbrack X,Y\rbrack/\mathfrak{A}\simeq A(3,m-2)\oplus A(3,e-2)
\end{equation}

\noindent
of two nearly homocyclic abelian \(3\)-groups of orders \(3^{m-2}\) and \(3^{e-2}\).
Therefore, \(G^\prime\) is non-elementary of \(3\)-rank exactly four.

\end{theorem}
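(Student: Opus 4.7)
The plan is to follow the template of the proof of Theorem \ref{thm:ComSbgSecBic}, with one extra monomial in the basis to account for the fact that the centre of \(G\) has rank two rather than one for \(e\ge 4\). By Theorem \ref{thm:SmbOrdLowBic} the annihilator is \(\mathfrak{A}=\mathfrak{R}_{m-2,e-1}=\left(X^{m-2},\ Y^{e-1},\ XY,\ T(X,Y)\right)\), and Corollary \ref{cor:CommutatorSubgroup} identifies \(G^\prime\) with the additive group underlying \(\mathbb{Z}\lbrack X,Y\rbrack/\mathfrak{A}\). Hence the claim reduces to determining the abelian type of that residue class ring.

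First, I would exhibit \((1,X,Y,Y^2)\) as a \(\mathbb{Z}\)-module spanning set. The inclusion \(XY\in\mathfrak{A}\) annihilates every mixed monomial \(X^iY^j\) with \(i,j\ge 1\). Multiplying \(T(X,Y)\equiv 0\) by \(X\) and discarding the terms \(3XY\) and \(XY^2\) produces the univariate recurrence \(X^3+3X^2+3X\equiv 0\), which inductively expresses \(X^k\) for \(k\ge 3\) in terms of \(X\) and \(X^2\); the base relation \(T\equiv 0\) itself writes \(X^2\equiv -3-3X-3Y-Y^2\) in the four proposed generators. The symmetric calculation with \(Y\) yields \(Y^3+3Y^2+3Y\equiv 0\), so all \(Y^k\) with \(k\ge 3\) lie in the span of \(Y\) and \(Y^2\). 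Note that when \(\nu=e-1=2\), \(Y^2\) itself is already in \(\mathfrak{A}\) and drops out of the basis, which recovers the three-generator situation of Theorem \ref{thm:ComSbgSecBic}.

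Second, I would read off the four orders by specializing Propositions \ref{prp:OrdPowX}, \ref{prp:OrdPowY}, and \ref{prp:OrdOne} to \(\mu=m-2\), \(\nu=e-1\), and \(\rho=0\). This yields \(\mathrm{ord}(1)=3^{\lceil (m-2)/2\rceil}\), \(\mathrm{ord}(X)=3^{\lfloor (m-2)/2\rfloor}\), \(\mathrm{ord}(Y)=3^{\lceil (e-2)/2\rceil}\), and \(\mathrm{ord}(Y^2)=3^{\lfloor (e-2)/2\rfloor}\). By Definition \ref{dfn:NearlyHomocyclic} and Remark \ref{rmk:NearlyHomocyclic}, the first pair is precisely the type of \(A(3,m-2)\) and the second pair is precisely the type of \(A(3,e-2)\).

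The argument would be closed by a cardinality count: the product of the four orders is \(3^{(m-2)+(e-2)}=3^{m+e-4}=3^{n-2}=\lvert G^\prime\rvert\). The canonical epimorphism \(A(3,m-2)\oplus A(3,e-2)\twoheadrightarrow\mathbb{Z}\lbrack X,Y\rbrack/\mathfrak{A}\) that sends the four cyclic generators of the direct sum to \(1\), \(X\), \(Y\), and \(Y^2\) is therefore a map between finite abelian groups of equal cardinality, hence an isomorphism. The main obstacle I anticipate is conceptual rather than computational: the relation \(T(X,Y)\equiv 0\) visibly couples the constant, the \(X\)-powers, and the \(Y\)-powers, so one cannot verify an internal direct-sum decomposition by inspection, and the order-count device is indispensable for promoting the surjection to an isomorphism as soon as the four generators and their orders are in place.
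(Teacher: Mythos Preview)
Your proof is correct and follows the paper's approach: the paper's proof simply asserts that \((1,X,Y,Y^2)\) is a basis of \(\mathbb{Z}[X,Y]/\mathfrak{A}\) with abelian type invariants \((\mathrm{ord}(1),\mathrm{ord}(X),\mathrm{ord}(Y),\mathrm{ord}(Y^2))\), relying implicitly on Propositions \ref{prp:OrdPowX}--\ref{prp:OrdOne}. Your explicit spanning argument and the cardinality count that promotes the surjection to an isomorphism supply the justification the paper leaves to the reader.
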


\noindent
Concerning the two nearly homocyclic groups arising as direct summands in Formula
\eqref{eqn:ComSbgLowBic},
note that
 
\begin{eqnarray*}
\label{eqn:LowBic}
\mathrm{A}(3,m-2)&\text{ is of type }&
\begin{cases}
\left(3^{\frac{m-2}{2}},3^{\frac{m-2}{2}}\right),&\text{ if }m\ge 6\text{ is even},\\
\left(3^{\frac{m-1}{2}},3^{\frac{m-3}{2}}\right),&\text{ if }m\ge 5\text{ is odd},
\end{cases}\\
\mathrm{A}(3,e-2)&\text{ is of type }&
\begin{cases}
\left(3^{\frac{e-2}{2}},3^{\frac{e-2}{2}}\right),&\text{ if }e\ge 4\text{ is even},\\
\left(3^{\frac{e-1}{2}},3^{\frac{e-3}{2}}\right),&\text{ if }e\ge 5\text{ is odd}.
\end{cases}
\end{eqnarray*}

\begin{proof}
The additive group underlying the residue class ring
\(\mathbb{Z}\lbrack X,Y\rbrack/\mathfrak{A}\)
with basis \((1,X,Y,Y^2)\) is isomorphic to the direct sum
\(A(3,m-2)\oplus A(3,e-2)\)
with abelian type invariants\\
\(\left(\mathrm{ord}(1),\mathrm{ord}(X),\mathrm{ord}(Y),\mathrm{ord}(Y^2)\right)\).
\end{proof}


\begin{theorem}
\label{thm:ComSbgSmlCyc}

The exceptional commutator subgroups of the twelve metabelian \(3\)-groups \(G\)
of order \(\lvert G\rvert=3^6\), nilpotency class \(\mathrm{cl}(G)=m-1=4\), and defect \(k=1\),
are given in dependence on the relational parameters \(\beta,\delta,\rho\) by Table
\ref{tbl:ComSbgSmlCyc}.

\end{theorem}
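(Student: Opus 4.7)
The plan is to apply Corollary \ref{cor:CommutatorSubgroup}, which identifies $G'$ with the additive group of $\mathbb{Z}[X,Y]/\mathfrak{A}$, and then to compute the additive type of $\mathbb{Z}[X,Y]/\mathfrak{A}$ separately in each of the four parameter regimes $(\beta,\delta,\rho)$ tabulated in Table \ref{tbl:SmbOrdSmlCyc}. The H.4 case with $(\beta,\delta,\rho)=(1,1,1)$ has already been fully worked out inside the proof of Theorem \ref{thm:SmbOrdSmlCyc}, yielding basis $(1,X,Y)$ with orders $(9,3,3)$ and hence $G'\simeq C_9\oplus C_3\oplus C_3$.

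For the three remaining regimes I would replay the same template. The starting data is the complete list of polynomial generators of $\mathfrak{A}$ in each case, assembled from Table \ref{tbl:SmbOrdSmlCyc} together with the relation $X^{m-2}=X^3\in\mathfrak{A}$ from Proposition \ref{prp:LowPowX} and the trace polynomial $T(X,Y)=X^2+3X+3+3Y+Y^2$ entering via item (3) of Proposition \ref{prp:LowFurtherPolynomials}. I would then systematically simplify: substitute the binding $Y^2\equiv\pm X^2$ into the trace relation, multiply the resulting polynomial in turn by $X$, $Y$, and small integers, and use the vanishing relations $X^3\equiv 0$ and $XY\equiv 0$ (up to small scalar corrections) to reduce high-degree terms. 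The output is a finite family of congruences $3^a X^iY^j\equiv 0$ from which the orders of the candidate basis elements $1,X,Y,X^2$ in the quotient ring can be read.

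The decisive step in each regime is to monitor the constant term produced by the reduced trace polynomial, since it determines $\mathrm{ord}(1)$. When that constant is a nonzero multiple of $3$ but not of $9$, one eventually obtains $3\in\mathfrak{A}$, and the quotient becomes elementary abelian of rank $4$ with basis $(1,X,Y,X^2)$. When the constant only yields $9\in\mathfrak{A}$ while the reduction also forces $X^2$ to become congruent to a unit multiple of $3$ modulo $9$, the element $X^2$ drops out of the basis, $\mathrm{ord}(1)=9$, and one recovers a structure of type $(9,3,3)$ with basis $(1,X,Y)$. Collecting the four outcomes fills in Table \ref{tbl:ComSbgSmlCyc}.

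I expect the main obstacle to be the parameter-dependent algebra in the b.10 regime with $\rho=1$, where the coefficient $2$ of $X^2$ in the reduced trace $2X^2+3X+3+3Y$ requires a careful combination of multiplications by $X$ and by $3$, together with $X^3\equiv 0$, to extract both $9\in\mathfrak{A}$ and the congruence $X^2\equiv 3\pmod 9$ without accidentally deducing $3\in\mathfrak{A}$, which would collapse $G'$ further than is warranted. The other three regimes are more rigid, but each still demands a separate short bookkeeping to confirm that the derived basis elements are actually independent and to exclude spurious relations among them.
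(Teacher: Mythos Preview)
Your proposal is correct and follows the same route as the paper, which simply cites Theorem \ref{thm:SmbOrdSmlCyc} and leaves the remaining three case-by-case computations of $\mathbb{Z}[X,Y]/\mathfrak{A}$ implicit. Your decision to pull in the trace polynomial from Proposition \ref{prp:LowFurtherPolynomials}(3) rather than relying solely on the generators displayed in Table \ref{tbl:SmbOrdSmlCyc} is well-founded: in the b.10 regime with $\rho=1$ the listed generators $(X^2-Y^2,\,XY,\,9)$ alone cut out a quotient of order $9^4$, and it is precisely the trace relation that forces $X^2\equiv 3$ and collapses the structure to type $(9,3,3)$.
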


\begin{proof}
This is a consequence of Theorem
\ref{thm:SmbOrdSmlCyc}.
\end{proof}

\begin{table}[ht]
\caption{Exceptional commutator subgroups for \(m=5\), \(n=6\), \(e=3\), \(\rho=\pm 1\)}
\label{tbl:ComSbgSmlCyc}
\begin{center}
\begin{tabular}{|c|l|c|rrr|c|c|}
\hline
 Groups                             & TKT  & Classes & \(\beta\) & \(\delta\) & \(\rho\) & Basis           & \(G^\prime\)  \\
\hline
 \(\langle 729,37\ldots 39\rangle\) & b.10 &   \(3\) &     \(0\) &      \(0\) &    \(1\) &     \((1,X,Y)\) &   \((9,3,3)\) \\
 \(\langle 729,44\ldots 47\rangle\) & H.4  &   \(4\) &     \(1\) &      \(1\) &    \(1\) &     \((1,X,Y)\) &   \((9,3,3)\) \\
 \(\langle 729,56\ldots 57\rangle\) & G.19 &   \(2\) &    \(-1\) &      \(0\) &    \(1\) & \((1,X,Y,Y^2)\) & \((3,3,3,3)\) \\
 \(\langle 729,34\ldots 36\rangle\) & b.10 &   \(3\) &     \(0\) &      \(0\) &   \(-1\) & \((1,X,Y,Y^2)\) & \((3,3,3,3)\) \\
\hline
\end{tabular}
\end{center}
\end{table}



\section{Final remark}
\label{s:Final}

This article is dedicated to the memory of the
gifted young group theorist Otto Schreier (\(1901\)--\(1929\)),
whose early passing away in the age of \(28\)
was a serious deprivation of the entire mathematical community,
since his extensive knowledge comprised
all the most up to date branches of mathematics in his time, for instance,
class field theory, algebraic and analytic number theory,
topology, functions of several complex variables,
finite groups, discontinuous group actions, and representations of continuous groups.
An obituary has been written by his friend and colleague Menger
\cite{Me}.
The central concept of the present article,
annihilator ideals of finite metabelian \(p\)-groups,
goes back to Furtw\"angler
\cite{Fw},
who was one of the academic advisors of Schreier at the University of Vienna,
and was developed further by Schreier
\cite{Sr1,Sr2},
Scholz and Taussky
\cite{SoTa},
Szekeres
\cite{Sz1,Sz2},
Brink
\cite{Br},
Brink and Gold
\cite{BrGo},
and Brown and Porter
\cite{BwPt}.



\section{Acknowledgements}
\label{s:Thanks}

The author gratefully acknowledges that his research is supported financially
by the Austrian Science Fund (FWF): P 26008-N25.




\end{document}